\newcommand{\blue}[1]{\textcolor{blue}{#1}}
\definecolor{refkey}{rgb}{0.9451,0.2706,0.4941}
\definecolor{labelkey}{rgb}{0.9451,0.2706,0.4941}
\definecolor{mygreen}{rgb}{0,0.7,0.3}
\definecolor{myblue}{rgb}{0,0.50,1.20}
\definecolor{myorange}{rgb}{1,0.5,0.1}
\numberwithin{equation}{section}
\crefname{thm}{Theorem}{Theorems}
\crefname{cor}{Corollary}{Corollaries}
\crefname{lem}{Lemma}{Lemmas}
\crefname{sublem}{Sublemma}{Sublemmas}
\crefname{prop}{Proposition}{Propositions}
\crefname{def}{Definition}{Definitions}
\crefname{example}{Example}{Examples}
\crefname{claim}{Claim}{Claims}
\crefname{conj}{Conjecture}{Conjectures}
\crefname{conv}{Notation}{Notations}
\crefname{rem}{Remark}{Remarks}
\crefname{rmk}{Remark}{Remarks}
\crefname{prob}{Problem}{Problems}
\crefname{figure}{Figure}{Figures}
\crefname{table}{Table}{Tables}
\crefname{section}{Section}{Sections}
\crefname{subsection}{Section}{Sections}
\crefname{appendix}{Appendix}{Appendices}
\crefname{introthm}{Theorem}{Theorems}
\crefname{introcor}{Corollary}{Corollaries}
\crefname{introconj}{Conjecture}{Conjectures}
\newtheorem{thm}{Theorem}[section]
\newtheorem{prop}[thm]{Proposition}
\newtheorem{cor}[thm]{Corollary}
\newtheorem{lem}[thm]{Lemma}
\theoremstyle{definition}
\newtheorem{conj}[thm]{Conjecture}
\theoremstyle{remark}
\newtheorem{rmk}[thm]{Remark}
\tikzset{
    squigarrow/.style={-{Classical TikZ Rightarrow[length=4pt]}, decorate, decoration={snake, amplitude=1.8pt, pre length=2pt, post length=3pt}}
}
\newcommand{\bC}{\mathbb C}
\newcommand{\bN}{\mathbb N}
\newcommand{\bQ}{\mathbb Q}
\newcommand{\bZ}{\mathbb Z}
\newcommand{\cC}{\mathcal C}
\newcommand{\cR}{\mathcal R}
\newcommand{\sS}{\mathscr{S}}
\newcommand{\sSq}{\mathscr{S}_q}
\newcommand{\DT}{{\rm DT}}
\newcommand{\Ha}{H_g^{\cC}}
\newcommand{\pHa}{\sSq(\partial H_g)_{\cC}}
\DeclareMathOperator{\Int}{\mathrm{Int}}
\DeclareMathOperator{\rankp}{\text{rank}_{\,\partial}}
\title[Finiteness conjecture for 3-manifolds obtained by attaching 2-handles]{Finiteness conjecture for 3-manifolds obtained from handlebodies by attaching 2-handles}
\author[Hiroaki Karuo]{Hiroaki Karuo}
\address{Hiroaki Karuo, Department of Mathematics, Gakushuin University, Mejiro, Toshima-ku, Tokyo, Japan.}
\email{hiroaki.karuo@gakushuin.ac.jp}
\author[Zhihao Wang]{Zhihao Wang}
\address{Zhihao Wang, School of Physical and Mathematical Sciences, Nanyang Technological University, 21 Nanyang Link Singapore 637371}
\email{ZHIHAO003@e.ntu.edu.sg}
\address{Bernoulli Institute, University of Groningen, P.O. Box 472, 9700 AK Groningen, The Netherlands}
\email{wang.zhihao@rug.nl}
\date{}
\begin{document}
\maketitle

\begin{abstract}
 We study a generalized Witten's finiteness conjecture for the skein modules of oriented compact $3$-manifolds with boundary. 
We formulate an equivalent version of the generalized finiteness conjecture using handlebodies and 2-handles, and prove the conjecture for some classes with the handlebodies of genus $2$ and $3$ using the equivalent version.
\end{abstract}

\setcounter{tocdepth}{1}
\tableofcontents

\section{Introduction}
Let $\cR$ be a commutative unital ring with a distinguished invertible element $q$ and $M$ be an oriented 3-manifold with (possibly empty) boundary $\partial M$. 
The \textbf{skein module} of $M$, denoted by $\sSq(M)$, is introduced as the $\cR$-module generated by isotopy classes of framed links (including the empty set) in $M$ subject to the following relations:
\begin{align*}
&({\rm A}) \begin{array}{c}\includegraphics[scale=0.18]{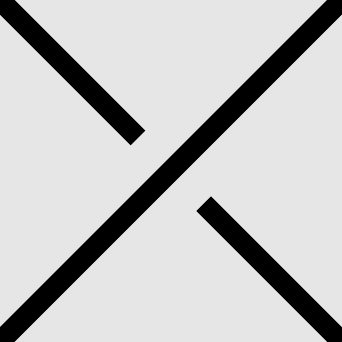}\end{array}=q\begin{array}{c}\includegraphics[scale=0.18]{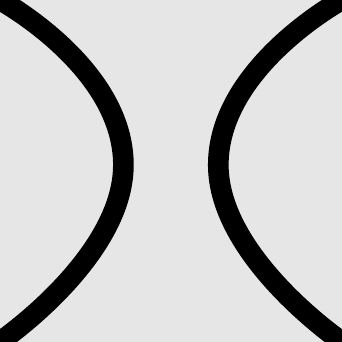}\end{array}+q^{-1}\begin{array}{c}\includegraphics[scale=0.18]{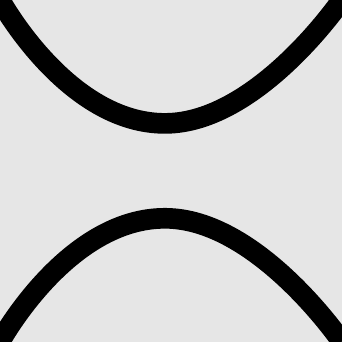}\end{array},\qquad ({\rm B}) \begin{array}{c}\includegraphics[scale=0.18]{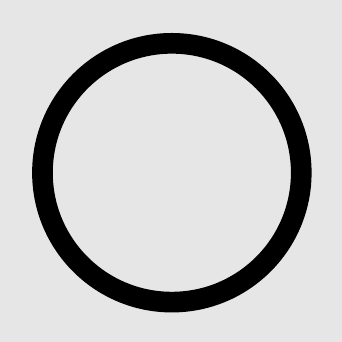}\end{array}=(-q^2-q^{-2})\begin{array}{c}\includegraphics[scale=0.18]{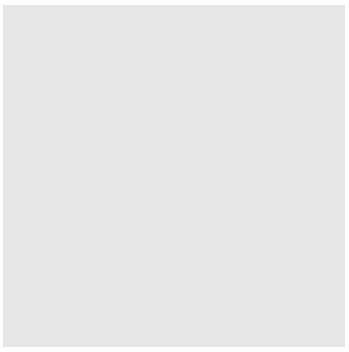}\end{array},
\end{align*}
where, in each relation, the local pictures are the intersection of framed links and an open 3-ball in $M$ and the framed links are the same except where shown.

For an oriented surface $\Sigma$, when we replace $M$ with $\Sigma\times [0,1]$, it allows multiplication on the skein module by stacking with respect to $[0,1]$. 
With this multiplication, the skein module of $\Sigma\times [0,1]$ is called the \textbf{skein algebra} of $\Sigma$, denoted by $\sSq(\Sigma)$. 

When we substitute $\pm1$ to $q$, we have 
$$
\begin{array}{c}\includegraphics[scale=0.18]{draws/crossing_posi.pdf}\end{array}
=\pm\begin{array}{c}\includegraphics[scale=0.18]{draws/resolution_posi.pdf}\end{array}
\pm\begin{array}{c}\includegraphics[scale=0.18]{draws/resolution_nega.pdf}\end{array}
=\begin{array}{c}\includegraphics[scale=0.18]{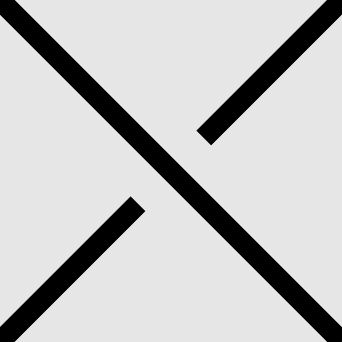}\end{array}. 
$$
This implies skein algebras are commutative at $q=\pm1$. 
In addition, skein modules are commutative algebras by taking the product of two framed links as a disjoint union of them. 
In particular, in the case when $\cR=\bC$, $\sS_{\pm1}(M)/\sqrt{0}$ is isomorphic to the coordinate ring of the ${\rm SL}_2\bC$-character variety of $\pi_1(M)$, where $\sqrt{0}$ is the nilradical \cite{Bar99}, \cite{Bul97}, \cite{PS00}. 

For any 3-manifold $M$,
the skein algebra $\sSq(\partial M)$ has an obvious action on $\sSq(M)$. 
We will use "$\cdot$" to denote this action.
More precisely, we identify a small closed regular neighborhood $U$ of $\partial M$ with $\partial M\times [0,1]$ such that $\partial M\times \{1\}$ is identified with $\partial M$. Then for any skeins $\alpha$ in $\partial M\times [0,1]$ and $\beta$ in $M$, we first push $\beta$ away from $U$, then define $\alpha\cdot\beta =\alpha\cup \beta$. 
With this action, $\sSq(M)$ can be regarded as a $\sSq(\partial M)$-module.

Skein modules have various features according to the ground ring $\cR$. 
For instance, when $\cR=\bQ(q)$ the field of rational functions in $q$ over $\bQ$, {\blue it was conjectured that skein modules of closed $3$-manifolds are finite dimensional; this was known as Witten's finiteness conjecture.}
After some partial results \cite{HP95}, \cite{HP93}, \cite{GH07}, \cite{Mro11}, \cite{Car17}, \cite{Det21} etc., the conjecture is solved affirmatively in \cite{GJS23}. 
\begin{thm}[Witten's finiteness conjecture {\cite{GJS23}}]\label{GJS}
For any oriented closed 3-manifold $M$, the skein module of $M$ has finite rank over $\bQ(q)$. 
\end{thm}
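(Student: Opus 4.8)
The plan is to compute $\sSq(M)$ by factorization homology and to reduce its finiteness to a dualizability property of the braided category underlying the defining relations. Let $\cA$ denote the free cocompletion of the ribbon category (Temperley--Lieb, equivalently $\mathrm{Rep}_q \mathrm{SL}_2$) whose generating object produces relations (A) and (B). For any oriented surface $\Sigma$ the skein category $\int_\Sigma \cA$ recovers the skein-theoretic data of $\Sigma$, and for a closed oriented $3$-manifold the factorization homology $\int_M \cA$ is a single $\bQ(q)$-vector space which I would first identify with $\sSq(M)$. So the initial task is to set up the once-extended topological field theory attached to $\cA$ and match its value on $M$ with the topologically defined skein module.

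Next I would feed in a Heegaard splitting $M = H_g \cup_{\Sigma_g} H_g'$, equivalently the presentation of $M$ as a genus-$g$ handlebody with $g$ attached $2$-handles capped by a $3$-handle. In the field theory each handlebody determines a distinguished module over the surface category $\int_{\Sigma_g}\cA$, and the gluing along $\Sigma_g$ expresses $\sSq(M)$ as the pairing of these two modules, that is, as a relative tensor product (a categorical Hochschild-type homology) taken over $\int_{\Sigma_g}\cA$. Finiteness of $\sSq(M)$ is thereby reduced to a statement internal to the surface category: the relevant $\mathrm{Hom}$-space, or coend, built from the two handlebody modules is finite-dimensional over $\bQ(q)$.

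The main obstacle is exactly this last finiteness, and it is where genericity of $q$ must enter essentially. At $q=\pm 1$ the analogue is false: by the identification recalled above, $\sS_{\pm1}(M)/\sqrt{0}$ is the coordinate ring of an $\mathrm{SL}_2\bC$-character variety, which is frequently positive-dimensional (already for $M = (S^1)^3$) and hence infinite-dimensional, so no naive degeneration to the classical limit can work. The heart of the proof must instead use that, for $q$ not a root of unity, $\cA$ is a non-degenerate (factorizable) ribbon category: the canonical coend algebra $\mathcal{O}_q = \int^{V\in\cA} V^\ast \otimes V$ and the internal skein algebras it controls are finite over their centers, and the surface category is ``proper'' enough that the Hochschild-type homology above is finite-dimensional. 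Establishing this rigidity --- that quantization strictly enlarges the defining relations and collapses the positive-dimensional classical moduli to a finite-dimensional quantum invariant --- is the crux, and I expect the bulk of the work to lie in verifying the requisite finiteness and dualizability of $\cA$ and of the handlebody modules, rather than in the topological bookkeeping.
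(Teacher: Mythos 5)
This theorem is not proved in the paper at all: it is Gunningham--Jordan--Safronov's result, imported as a black box from \cite{GJS23}, so there is no internal argument to compare your attempt against. Judged against the actual proof in \cite{GJS23}, your outline does track the right skeleton --- identify $\sSq(M)$ with factorization homology of the Temperley--Lieb/$\mathrm{Rep}_q\mathrm{SL}_2$ category, feed in a Heegaard splitting, and express $\sSq(M)$ as a relative tensor product of the two handlebody modules over the skein category of $\Sigma_g$ (equivalently over its internal skein algebra). Your observation that no degeneration to $q=\pm1$ can work, because the character variety is positive-dimensional, is also the correct sanity check.

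However, the one concrete mechanism you offer for the crux is not the one that works. At generic $q$ the skein algebra of a closed surface is \emph{not} finite over its center (its center is essentially trivial); ``finite over the center'' is a root-of-unity phenomenon and points in the wrong direction here. The finiteness in \cite{GJS23} instead rests on showing that, for $q$ generic, the braided category is generated (``dominated'') by the distinguished object, so that the surface category is equivalent to modules over the internal skein algebra, and then proving that each handlebody module is finitely generated over that algebra and that the resulting relative tensor product collapses to something finite-dimensional --- an argument requiring a filtration/deformation analysis, not a finiteness-over-center statement. Since your proposal explicitly defers ``the bulk of the work'' to exactly this step and the one technical claim it makes about it is inaccurate, it should be read as a roadmap toward \cite{GJS23} rather than a proof; for the purposes of the present paper, simply citing \cite{GJS23}, as the authors do, is the intended treatment.
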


As a generalization of this conjecture, the following conjecture is formulated  by Detcherry \cite{Det21}.
\begin{conj}[Finiteness conjecture for 3-manifolds with boundaries]\label{conj}
For any oriented compact 3-manifold $M$ with (possibly empty) boundary $\partial M$, the skein module $\sSq(M)$ has  a finite subset which spans $\sSq(M)$ over the $\bQ(q)$-algebra $\sSq(\partial M)$, 
where if $\partial M=\emptyset$ then we regard $\sSq(\partial M)=\bQ(q)$. 
\end{conj}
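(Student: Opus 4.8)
The plan is to reduce the conjecture to the ``equivalent version'' advertised in the abstract --- finiteness for a handlebody with $2$-handles attached --- and then to analyze the $2$-handle step by combining the known module structure of handlebody skein modules with the handle-slide relations produced by surgery.

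\textbf{Reduction to handlebodies with $2$-handles.} If $\partial M=\emptyset$ we are in the closed case and $\sSq(M)$ has finite $\bQ(q)$-rank by \Cref{GJS}, so assume $\partial M\neq\emptyset$ and $M$ connected. A compact connected $3$-manifold with nonempty boundary admits a handle decomposition with handles of index $0,1,2$ only (no top-index handles). The $0$- and $1$-handles assemble into a handlebody $H_g$, and $M$ is obtained from $H_g$ by attaching the $2$-handles; thus it suffices to prove the conjecture in this case. As a first input I would record that handlebodies already satisfy the conjecture in the strongest possible form: $\sSq(H_g)$ is generated over $\sSq(\partial H_g)=\sSq(\Sigma_g)$ by the single element $\emptyset$. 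Indeed every framed link in $H_g$ can be isotoped onto a spine, each spine curve is isotopic to a boundary longitude $\lambda_i\subset\Sigma_g$, and the product-to-sum expansion of $\lambda_{i_1}\cdots\lambda_{i_k}\cdot\emptyset$ recovers every monomial in the cores modulo lower complexity, so induction gives cyclicity.

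\textbf{Presenting $\sSq(M)$.} Writing $M=H_g\cup_{\gamma_1,\dots,\gamma_m}(\text{$2$-handles})$, the inclusion $H_g\hookrightarrow M$ induces a surjection $\sSq(H_g)\twoheadrightarrow\sSq(M)$, since any link can be pushed out of the $2$-handles into $H_g$; its kernel I expect to be generated by the handle-slide relations, i.e.\ differences $L-h_{\gamma_j}(L)$ where $h_{\gamma_j}$ slides a strand of $L$ across the core disk bounded by $\gamma_j$ in $M$. Simultaneously $\partial M$ is the surface obtained by surgering $\Sigma_g$ along the $\gamma_j$ (compressing them), so the acting algebra changes from $\sSq(\Sigma_g)$ to $\sSq(\partial M)$. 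Composing the two facts above, $\sSq(M)$ is generated over the image of $\sSq(\Sigma_g)$ by $\emptyset$; the task is therefore to show that this image is controlled by $\sSq(\partial M)$.

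\textbf{From the $\Sigma_g$-action to the $\partial M$-action.} Here is where the real work lies. Each attaching curve $\gamma_j$ bounds a disk in $M$, so as an element of $\sSq(\Sigma_g)$ acting on $\sSq(M)$ it becomes the scalar $-q^2-q^{-2}$; curves on $\Sigma_g$ disjoint from all $\gamma_j$ descend to curves on $\partial M$ and act through $\sSq(\partial M)$. I would introduce a filtration of $\sSq(M)$ by geometric complexity (total intersection number with a fixed meridian system, or the degree in the product-to-sum grading), check that both the handle-slide relations and the $\sSq(\partial M)$-action respect it, and argue that on the associated graded any curve meeting some $\gamma_j$ can be resolved and reduced, modulo the relations, to boundary-parallel data of strictly lower complexity, so that finitely many generators suffice. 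To organize the reduction I would be guided by the classical limit $q=\pm1$, where $\sSq(M)/\sqrt{0}$ is the coordinate ring of the $\mathrm{SL}_2\bC$-character variety of $\pi_1(M)$ and finite generation over $\sSq(\partial M)$ becomes finiteness of the restriction morphism $X(M)\to X(\partial M)$; establishing this integrality classically and lifting it by a leading-term (Nakayama-type) argument over the filtered algebra is the natural route.

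\textbf{Main obstacle.} The crux, and the reason an unconditional proof is out of reach in general, is the last step: for arbitrary genus and arbitrarily many attaching curves, the combinatorics of how curves on $\Sigma_g$ intersect the $\gamma_j$ --- and hence the bookkeeping of which complexities survive modulo handle slides and the boundary action --- becomes unmanageable, and the lift from $q=\pm1$ to generic $q$ is not formal. This is precisely why the method yields the conjecture only for the handlebodies $H_2$ and $H_3$ with controlled attaching data, where the relevant curve systems can be enumerated and the leading-term analysis carried out by hand.
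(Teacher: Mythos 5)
The statement you were asked to prove is a conjecture: the paper does not prove it, and neither do you --- correctly, since it is open. What the paper actually establishes about Conjecture \ref{conj} is (i) an equivalence with the handlebody-plus-$2$-handle formulations (Theorem \ref{equal}, via the fact that every oriented connected compact $3$-manifold is, up to removing some open $3$-balls, of the form $H_g^{\cC}$, together with the surjectivity of $\iota_\ast\colon\sSq(H_g)\to\sSq(H_g^{\cC})$ from \cite{Prz00} and the observation that every diagram on $\partial H_g^{\cC}$ can be pushed off the attaching disks), and (ii) verification in specific families for $g=2,3$ (Theorem \ref{thm;example}). Your first two paragraphs reproduce essentially the content of Theorem \ref{equal}: the closed case via Theorem \ref{GJS}, the reduction to a handlebody with $2$-handles, the surjectivity of $\iota_\ast$, the cyclicity of $\sSq(H_g)$ over $\sSq(\partial H_g)$, and the need to cut the acting algebra down to curves disjoint from the attaching curves. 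This matches the paper's route.

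Two caveats on the parts that go beyond the reduction. First, your claim that the kernel of $\sSq(H_g)\twoheadrightarrow\sSq(M)$ is generated by handle-slide relations is only an expectation; such a presentation is not available in the generality you would need, and the paper deliberately avoids it --- only surjectivity of $\iota_\ast$ is ever used, never a description of the kernel. Second, your final step (filtration by intersection number, reduction on the associated graded, lifting from $q=\pm1$ by a Nakayama-type argument) is precisely the open part; as you acknowledge, the lift from the character-variety picture at $q=\pm1$ to generic $q$ is not formal, and the combinatorics of curves meeting the attaching system is exactly where the paper's actual arguments (Dehn--Thurston coordinates, the degree bounds of Lemmas \ref{keylem} and \ref{keylem-s}, and triangularity of leading coefficients at $q=1$) do the work, and only for the specific gluing curves listed in Theorem \ref{thm;example}. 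So your proposal is a faithful account of the reduction plus an honest statement of the obstruction, not a proof; there is no hidden gap beyond the one you yourself name.
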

\noindent Note that there is a stronger version of Conjecture \ref{conj} proposed by Detcherry \cite{Det21}. 

Except for closed 3-manifolds, it is known that Conjecture \ref{conj} holds for 
\begin{itemize}
    \item $F\times [0,1]$ with a compact oriented surface $F$ (by definition),
    \item Seifert fibered manifolds with orientable base and non-empty boundary \cite{AF22},
    \item the complement of a torus knot, a 2-bridge link or a ($-2,3,2n+1$)-pretzel knots in $S^3$ \cite{Le06}, \cite{Mar10}, \cite{LT14}, \cite{LT15}, 
\end{itemize}
and their proofs are based on diagrammatic techniques.

For a general 3-manifold $M$ with non-empty boundary, Conjecture \ref{conj} is more complicated than Witten's finiteness conjecture since we have to treat infinitely many skeins  near the boundary. 
While there are some techniques to understand which skeins are near the boundary, the techniques do not apply to general cases,  especially 3-manifolds having higher genus surfaces as their boundaries.
This paper aims that we attribute Conjecture \ref{conj} to an equivalent and tractable version with a handlebody and 2-handles (Theorem \ref{equal}). 
The idea comes from the fact that any oriented connected compact 3-manifolds are obtained from handlebodies by gluing 2-handles  up to 3-balls, see Section \ref{sec;equiv_conj}. 
With the equivalent version and diagrammatic techniques, we show Conjecture \ref{conj} affirmatively for some 3-manifolds with boundary and with Heegaard genus $2$ or $3$ (Theorem \ref{thm;example}). 
In particular, the boundary of each 3-manifold in the case of genus $3$ is the closed surface of genus $2$. This is the first non-trivial example for Conjecture \ref{conj} with higher genus boundary surfaces.
This implies that the examples are not covered by the listed previous results.

\paragraph{\textbf{Acknowledgements.}} 
The authors are grateful to Thang L\^e and Roland van der Veen for valuable comments. 
H. K. was partially supported by JSPS KAKENHI Grant Numbers JP22K20342, JP23K12976. 
Z. W. was supported by the NTU research scholarship and the PhD scholarship from the University of Groningen.
\section{Finiteness conjecture}

\subsection{Equivalent versions for finiteness Conjecture}\label{sec;equiv_conj}
In the following, we use $\bN$ to denote the set of non-negative integers and work on $\cR=\bQ(q)$ unless otherwise specified, where $\bQ(q)$ is the field of rational functions in $q$ over $\bQ$.

Let $M$ be an oriented connected compact 3-manifold,
and let $\cC$ be a collection of disjoint embedded closed curves in $\partial M$. 
We use
$M^{\cC}$ to denote the 3-manifold obtained from $M$ by attaching 
2-handles along all the closed curves in  $\cC$, and call the pair 
$(M,\cC)$ an \textbf{attaching system}. Let $\iota\colon M\rightarrow M^{\cC}$ denote the natural embedding and $\iota_\ast \colon \sSq(M)\to \sSq( M^{\cC})$ denote the induced homomorphism.
Let $\sSq(\partial M)_{\cC}$ be the  $\bQ(q)$-subvector space of $\sSq(\partial M)$ linearly spanned by all diagrams in $\partial M$ having no intersection with $\cC$. Clearly 
$\sSq(\partial M)_{\cC}$ is a subalgebra of $\sSq(\partial M)$.

 For any $\bQ(q)$-subalgebra $A$ of $\sSq(\partial M)$ and any nonempty subset $X$ of $\sSq(M)$, define
$$A\cdot X = \{a_1\cdot x_1+\cdots + a_n\cdot x_n\mid n\in\mathbb{N}, \ a_i\in A,\ x_i\in X,\ 1\leq i\leq n \},$$
which is a $\bQ(q)$-subvector space of $\sSq(M)$  (when $n = 0$, we define $a_1\cdot x_1+\cdots + a_n\cdot x_n$ to be $0$). 

For any
non-negative integer $g$, let $H_g$ denote the handlebody of genus $g$.

\begin{conj}\label{stro}
For any attaching system $(M,\cC)$, there exists a finite subset $Y$ of $\sSq(M)$ such that $\iota_\ast(\sSq(\partial M)_{\cC}\cdot Y) = \sSq(M^{\cC})$.
\end{conj}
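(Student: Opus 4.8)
The plan is to attack the statement directly through the surjectivity of $\iota_\ast$ together with a complexity-reduction argument that trades the winding of a skein around the attaching curves for the peripheral action of $\sSq(\partial M)_{\cC}$. First I would record that $\iota_\ast\colon\sSq(M)\to\sSq(M^{\cC})$ is \emph{surjective}: writing each 2-handle attached along $c\in\cC$ as $D^2\times[0,1]$ glued along an annular neighborhood of $c$, its cocore $\{0\}\times[0,1]$ is a properly embedded arc, and since a $1$-manifold generically misses an arc in a $3$-manifold, any framed link in $M^{\cC}$ is isotopic to one disjoint from all cocores, hence to a link lying in $\iota(M)$. Consequently, proving the statement is equivalent to exhibiting a finite $Y\subset\sSq(M)$ whose image generates $\sSq(M^{\cC})$ as a module over the image subalgebra $\iota_\ast\big(\sSq(\partial M)_{\cC}\big)$; here the point is that curves of $\partial M$ disjoint from $\cC$ persist into $\partial M^{\cC}$ and continue to act, and the action commutes with $\iota_\ast$.

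Next I would make the kernel explicit. In $M^{\cC}$ each attaching curve $c\in\cC$ bounds the core disk $D_c$ of its 2-handle, which yields two kinds of relations in $\sSq(M^{\cC})$: a copy of $c$ pushed into the interior, being an unknot bounding $D_c$, reduces by relation (B) to a scalar multiple of the empty skein; and, more usefully, a strand running parallel to $c$ may be \textbf{slid across} $D_c$ to the opposite side, the difference of the two skeins lying in $\kernel\iota_\ast$. I expect $\sSq(M^{\cC})\cong\sSq(M)/R_{\cC}$, with $R_{\cC}$ the $\bQ(q)$-span of all such slide differences; working modulo $R_{\cC}$ is the mechanism for removing the parts of a skein that wrap the attaching curves.

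The heart of the argument is a reduction scheme, which it is convenient to model on $M=H_g$ (the tractable presentation by a handlebody with 2-handles indicated in the introduction), representing $\sSq(H_g)$ by diagrams on the standard genus-$g$ Heegaard surface with the curves of $\cC$ among the compressing curves. I would introduce a complexity on skeins — the geometric intersection number with the cocores, refined by the number of parallel copies around each handle — and argue that, given a skein of high complexity, either a peripheral curve in $\partial M\setminus\cC$ splits off by a product-to-sum (Chebyshev) identity and can be absorbed into the $\sSq(\partial M)_{\cC}$-action while lowering the complexity, or a slide across some $D_c$ strictly reduces the intersection with the cocores modulo $R_{\cC}$. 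Iterating would leave only finitely many skeins of bounded complexity modulo the action, and these would constitute $Y$; the low-genus cases $g=2,3$ of Theorem~\ref{thm;example} are exactly where this scheme can be pushed through by hand.

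The main obstacle is the termination of this scheme for arbitrary $(M,\cC)$. In high genus a slide across one disk $D_c$ typically \emph{increases} the intersection with the remaining cocores and with the dual curves of the other attaching curves, whereas the peripheral action only controls curves disjoint from all of $\cC$; there is no evident complexity on which both moves act monotonically and compatibly. Establishing a uniform bound on the complexity of fully reduced skeins — equivalently, a Noetherianity-type finiteness for the $\sSq(\partial M)_{\cC}$-module $\sSq(M^{\cC})$ — is precisely the crux that the general conjecture leaves open, and is the step I expect to resist any purely diagrammatic treatment beyond the cases settled here.
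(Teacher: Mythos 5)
You should first note that the statement you were asked to prove is Conjecture~\ref{stro}: the paper does not prove it, and does not claim to. Its entire treatment consists of Theorem~\ref{equal}, which shows Conjecture~\ref{stro} is \emph{equivalent} to Detcherry's Conjecture~\ref{conj} and to the handlebody version Conjecture~\ref{str}, together with verifications in special cases (Proposition~\ref{disk}, Corollary~\ref{cor2.6}, and the genus~$2$ and~$3$ families of Theorem~\ref{thm;example}). Your first two paragraphs are sound and in fact reproduce the paper's equivalence machinery: surjectivity of $\iota_\ast$ is exactly the input from \cite{Prz00} used in Theorem~\ref{equal}(2), the description of $\kernel\iota_\ast$ by handle-slide differences is the content behind Proposition~\ref{disk}, and the observation that curves disjoint from $\cC$ persist into $\partial M^{\cC}$ and act compatibly is precisely how the paper passes between $\sSq(\partial M)_{\cC}$ and $\sSq(\partial M^{\cC})$. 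But none of this proves the conjecture; it only recasts it.

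The genuine gap is the one you name yourself in your final paragraph: your reduction scheme has no termination argument. There is no complexity function on which both of your moves are monotone --- sliding a strand across one core disk $D_c$ can increase the geometric intersection with the other cocores and with $\cC$, and the peripheral action only absorbs curves disjoint from \emph{all} of $\cC$ --- so the iteration ``reduce or absorb'' need not converge to a bounded set, and exhibiting such a bound is exactly the open content of the conjecture. It is worth seeing how the paper sidesteps this in the cases it does settle, because the mechanism is quite different from a slide-based reduction: it fixes explicit linear bases of the handlebody skein modules (Lemma~\ref{lem-skeinalgebra} for $\sSq(\Sigma_0^3)$, Lemma~\ref{lem;basis} for $\sSq(\Sigma_0^4)$), chooses specific curves in $\sSq(\partial H_g)_\gamma$ whose action on basis monomials has controlled leading terms with respect to a degree built from Dehn--Thurston data (Lemmas~\ref{keylem}, \ref{lem;prod_sum_sss}, \ref{keylem-s}), and then inverts the leading-coefficient matrix by checking it is triangular with unit diagonal at $q=1$. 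This leading-term strategy is bespoke to each gluing curve $\gamma$ and does not obviously globalize, which is consistent with the paper leaving the general statement as a conjecture; your proposal, honestly, arrives at the same impasse, so it should be graded as a correct reduction plus an accurate diagnosis of the obstruction, not as a proof.
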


Let us focus on a special case with handlebodies. 
\begin{conj}\label{str}
For any 
attaching system $(H_g,\cC)$, there exists a finite subset $X$ of $\sSq(H_g)$ such that $\iota_\ast(\pHa\cdot X) = \sSq(\Ha)$.
\end{conj}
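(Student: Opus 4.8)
The plan is to recast Conjecture \ref{str} as a clean finite-generation statement for $\sSq(\Ha)$ over the skein algebra of its \emph{own} boundary, and then to attack that statement using the known structure of handlebody skein modules together with the relations forced by the $2$-handles. First I would record two structural reductions. The first is that the natural map $\iota_\ast\colon\sSq(H_g)\to\sSq(\Ha)$ is surjective: any framed link in $\Ha$ meets each attached $2$-handle $D^2\times D^1$ in a family of properly embedded arcs with endpoints on the attaching annulus $\partial D^2\times D^1\subset\pH$, and each such arc can be pushed radially out of the handle into $H_g$. The second is that attaching the $2$-handles compresses $\pH$ along $\cC$, so that every simple closed curve on $\partial(\Ha)$ may be isotoped off the attached co-disks and hence descends from a curve on $\pH$ disjoint from $\cC$; this produces a surjective algebra homomorphism $\phi\colon\pHa\twoheadrightarrow\sSq(\partial(\Ha))$, and by pushing boundary skeins into a collar one checks the compatibility $\iota_\ast(\alpha\cdot\beta)=\phi(\alpha)\cdot\iota_\ast(\beta)$ for $\alpha\in\pHa$ and $\beta\in\sSq(H_g)$.

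Granting these two facts, I would argue that Conjecture \ref{str} is \emph{equivalent} to the assertion that $\sSq(\Ha)$ is finitely generated as a module over $\sSq(\partial(\Ha))$. Indeed, given a finite generating set $Z\subset\sSq(\Ha)$ over $\sSq(\partial(\Ha))$, surjectivity of $\iota_\ast$ lets me pick a finite $X\subset\sSq(H_g)$ with $\iota_\ast(X)=Z$, and then $\iota_\ast(\pHa\cdot X)=\phi(\pHa)\cdot\iota_\ast(X)=\sSq(\partial(\Ha))\cdot Z=\sSq(\Ha)$; the converse is immediate from the same identities. This is exactly the ``tractable version'' advertised in the introduction: since every oriented connected compact $3$-manifold is of the form $\Ha$ after capping sphere boundary components with $3$-balls, the reduction also shows that Conjecture \ref{str}, ranging over all $(H_g,\cC)$, matches the general Conjecture \ref{conj}. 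I would treat the effect of capping an $S^2$ boundary component with a $3$-ball on finite generation as a separate, elementary check.

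To establish the finite-generation statement itself, I would start from the fact that $\sSq(H_g)$ is a free $\bQ(q)$-module with an explicit multicurve basis, and analyze the extra relations that $\iota_\ast$ introduces: each $2$-handle glued along $c\in\cC$ provides a disk bounding $c$ in $\Ha$, across which strands may be slid, yielding handle-slide relations identifying skeins that differ by a push through the handle. Next I would filter skeins by a suitable complexity, for instance the geometric intersection number with a fixed spine of $H_g$ or the degree in the core curves, and try to show that, modulo the handle-slide relations and the action of curves in $\pHa$, every skein of high complexity reduces to a $\bQ(q)$-combination of skeins of strictly lower complexity. The finitely many irreducible skeins remaining would then constitute the desired set $X$.

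The hard part will be precisely this last reduction. The obstruction is the control of skeins that live near $\pH$ and cannot be pushed into the interior of $H_g$: these must be absorbed by the boundary action of $\pHa$ through the handle-slide relations, and in higher genus the combinatorics of multicurves on $\pH$ disjoint from $\cC$, interacting with the slides over all of the attached $2$-handles, becomes unwieldy and admits no uniform complexity bound in general. This is exactly why the statement remains conjectural; I expect the reduction scheme to terminate only when $g$ is small and $\cC$ is suitably constrained, which is the regime in which the explicit diagrammatic computation can be pushed through, as in the genus $2$ and $3$ cases of Theorem \ref{thm;example}.
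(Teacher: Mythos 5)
The statement you were asked to prove is a \emph{conjecture}: the paper does not prove it in general, and no complete proof is currently known. What the paper does is (i) show in Theorem \ref{equal} that Conjecture \ref{str} is equivalent to Conjectures \ref{conj} and \ref{stro}, using exactly the two structural facts you record --- surjectivity of $\iota_\ast$ (quoted from Przytycki) and the observation that every diagram on $\partial M^{\cC}$ can be isotoped off the co-disks and hence lifted to a diagram on $\partial M$ disjoint from $\cC$ --- and (ii) verify the conjecture for special families (Proposition \ref{disk}, Corollary \ref{cor2.6}, Theorem \ref{thm;example}). Your first two paragraphs reproduce the content of Theorem \ref{equal} faithfully, including the reduction of Conjecture \ref{str} to finite generation of $\sSq(\Ha)$ over $\sSq(\partial\Ha)$ and the remark about capping sphere boundary components. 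One technical caveat: you assert a well-defined surjective algebra homomorphism $\phi\colon\pHa\twoheadrightarrow\sSq(\partial(\Ha))$, but the paper only needs (and only uses) the surjectivity of the \emph{lift} in the opposite direction, namely that every diagram on $\partial M^{\cC}$ comes from one on $\partial M\setminus N(\cC)$; well-definedness of $\phi$ on skein classes of $\pHa$ (which are equivalence classes under isotopies and skein relations in all of $\partial M$, possibly crossing $\cC$) would require a separate argument that you do not supply and that the paper avoids needing.

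The genuine gap is the one you yourself identify: your third paragraph proposes a complexity filtration and a reduction of high-complexity skeins modulo handle-slide relations and the $\pHa$-action, but no such uniform reduction is established, and indeed this is precisely the open content of the conjecture. As written, your argument proves only the equivalence with the finite-generation statement, not the statement itself. For the record, the cases the paper actually settles (the genus $2$ and $3$ families of Theorem \ref{thm;example}) are proved not by a complexity filtration on skeins in $H_g$ with handle slides, but by exhibiting explicit curves on $\pH$ disjoint from the gluing curve whose action, after a leading-term analysis at $q=1$ using Dehn--Thurston coordinates and the known bases of $\sSq(\Sigma_0^3)$ and $\sSq(\Sigma_0^4)$, generates the full skein module of the handlebody from a finite set $X$; this is a different (and more computational) mechanism than the handle-slide reduction you sketch.
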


Let $\rankp \sSq(M)$ denote the minimum number of generators of $\sSq(M)$ as an $\sSq(\partial M)$-module if the minimum number exists  and $\infty$ otherwise. 
Note that if $\sSq(M)$ is a free $\sSq(\partial M)$-module then $\rankp \sSq(M)$ is equal to the usual rank over $\sSq(\partial M)$. 

\begin{thm}\label{equal}
\begin{enumerate}
    \item  If the attaching system $(H_g,\cC)$ satisfies Conjecture \ref{str},  then $H_g^{\cC}$ satisfies Conjecture \ref{conj} and $$\rankp\sSq(\Ha)\leq |X|.$$
    \item For any attaching system $(M,\cC)$, if $M^{\cC}$ satisfies Conjecture \ref{conj}, then $(M,\cC)$ satisfies Conjecture \ref{stro}.
    \item Conjectures \ref{conj}, \ref{stro} and \ref{str} are equivalent to each other. 
\end{enumerate}
\end{thm}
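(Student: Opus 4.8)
The plan is to reduce all three parts to a small number of geometric facts about how skeins meet the attached $2$-handles, and then run elementary module bookkeeping. Throughout, write $\Sigma_0 := \partial M \setminus N(\cC)$ for the complement of an open annular neighborhood of $\cC$. This subsurface embeds simultaneously into $\partial M$ and into $\partial M^{\cC}$, the latter being obtained from $\Sigma_0$ by capping its new boundary circles with the cocore disks of the $2$-handles. Let $j_0\colon \sSq(\Sigma_0)\to \sSq(\partial M)$ and $j_1\colon \sSq(\Sigma_0)\to \sSq(\partial M^{\cC})$ denote the maps induced by these inclusions, and note that by definition $j_0(\sSq(\Sigma_0))=\sSq(\partial M)_{\cC}$. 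I will use three lemmas, each proved by a standard general–position/isotopy argument: (a) $\iota_\ast\colon\sSq(M)\to\sSq(M^{\cC})$ is surjective, since any framed link can be made transverse to, and then pushed off, the cocores of the $2$-handles; (b) $j_1$ is surjective, since any diagram on $\partial M^{\cC}$ can be isotoped off the capping disks (every properly embedded arc in a disk is boundary-parallel, so an outermost-arc argument removes all intersections); and (c) the two module structures are compatible along $\iota_\ast$, namely $\iota_\ast\big(j_0(\alpha)\cdot w\big)=j_1(\alpha)\cdot\iota_\ast(w)$ for all $\alpha\in\sSq(\Sigma_0)$ and $w\in\sSq(M)$, because a collar of $\Sigma_0$ is shared by $M$ and $M^{\cC}$ (the handles being attached away from $\Sigma_0$).

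Granting these, parts (1) and (2) are short. For (1), take the finite set $X$ provided by Conjecture \ref{str}, set $\Sigma_0=\partial H_g\setminus N(\cC)$, and use $\pHa=j_0(\sSq(\Sigma_0))$ together with (c):
\[
\sSq(\Ha)=\iota_\ast\big(\pHa\cdot X\big)=\sum_{x\in X} j_1\big(\sSq(\Sigma_0)\big)\cdot\iota_\ast(x)\subseteq \sSq(\partial \Ha)\cdot\iota_\ast(X).
\]
Since the reverse inclusion is automatic, $\iota_\ast(X)$ generates $\sSq(\Ha)$ over $\sSq(\partial \Ha)$, so $\Ha$ satisfies Conjecture \ref{conj} with $\rankp\sSq(\Ha)\le|X|$. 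For (2), let $Z$ be a finite generating set of $\sSq(M^{\cC})$ over $\sSq(\partial M^{\cC})$ (from Conjecture \ref{conj} for $M^{\cC}$) and lift it through the surjection (a) to a finite $\tilde Z\subseteq\sSq(M)$. Using (b) to rewrite $\sSq(\partial M^{\cC})=j_1(\sSq(\Sigma_0))$ and then (c) to pull the action back through $\iota_\ast$,
\[
\sSq(M^{\cC})=j_1\big(\sSq(\Sigma_0)\big)\cdot Z=\iota_\ast\big(j_0(\sSq(\Sigma_0))\cdot\tilde Z\big)=\iota_\ast\big(\sSq(\partial M)_{\cC}\cdot\tilde Z\big),
\]
so $Y:=\tilde Z$ witnesses Conjecture \ref{stro}.

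Part (3) then closes a cycle. Conjecture \ref{stro} contains Conjecture \ref{str} as the special case $M=H_g$, so $\ref{stro}\Rightarrow\ref{str}$ is immediate, and $\ref{conj}\Rightarrow\ref{stro}$ is exactly (2). It remains to show $\ref{str}\Rightarrow\ref{conj}$. Given a compact oriented $3$-manifold $N$, reduce to the connected case, since skein modules of disjoint unions are tensor products and finite generation over the boundary algebra is preserved. A handle decomposition of $N$ groups its $0$- and $1$-handles into a handlebody $H_g$, its $2$-handles into an attaching system $(H_g,\cC)$ — here there is no framing ambiguity, an annular neighborhood of a curve in a surface being canonical — and its $3$-handles cap off some $S^2$ boundary components of $\Ha$ with balls. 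By Conjecture \ref{str} and part (1), $\Ha$ satisfies Conjecture \ref{conj}. A capping lemma finishes: gluing a $3$-ball to an $S^2$ boundary component induces isomorphisms $\sSq(N')\cong\sSq(N)$ and $\sSq(\partial N')\cong\sSq(\partial N)$ compatible with the module structures, both built from $\sSq(B^3)=\sSq(S^2\times[0,1])=\bQ(q)$ and the push-off-the-ball argument; hence Conjecture \ref{conj} is insensitive to capping and transfers from $\Ha$ to $N$ (the closed case being the subcase in which every boundary sphere is capped).

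I expect the geometric lemmas (a)–(c) to be the real content, with (b) and (c) the crux: together they express that the ``new'' boundary curves created by the surgery are redundant, and that their action on $\sSq(M^{\cC})$ is recovered, through $\iota_\ast$, from diagrams on $\partial M$ disjoint from $\cC$. Once these are established the module manipulations above are routine; the only remaining point needing care is checking that the capping lemma respects the $\sSq(\partial\,\cdot)$-module structure, so that finite generation is genuinely transported and not merely the underlying $\bQ(q)$-dimension.
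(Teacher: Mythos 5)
Your proposal is correct and follows essentially the same route as the paper: your lemmas (a)--(c) are exactly the ingredients the paper uses (surjectivity of $\iota_\ast$ via \cite{Prz00}, pushing boundary diagrams off the capping disks into $\partial M\setminus N(\cC)$, and compatibility of the two module actions along $\iota_\ast$), and part (3) closes the same cycle via the handlebody-plus-$2$-handles decomposition. You are somewhat more explicit than the paper about the $3$-handle/capping step and the disconnected case, but these are presentational rather than substantive differences.
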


\begin{proof}
(1) 
It suffices to show $\pHa$ is generated by $\iota_\ast(X)$ over $\sSq(\partial \Ha)$. For any diagram $L$ in $\partial H_g$ having no intersection with $\cC$, and any element $x\in X$, $\iota_\ast(L\cdot x)
= \iota_\ast(L\cup x) = \iota(L)\cup \iota_\ast(x)$, where $\iota(L)\subset \partial\Ha$. Thus
$\iota_\ast(L\cdot x)\in \sSq(\partial \Ha)\cdot \iota_\ast(X)$. Then the definition of 
$\pHa$ and the fact $\iota_\ast(\pHa\cdot X) = \sSq(\Ha)$ imply 
$\sSq(\partial \Ha)\cdot \iota_\ast(X) = \sSq(\Ha)$. 
The required inequality follows from the argument. 

(2) 
From Conjecture \ref{conj}, we know there exists a finite subset $Z$ of $\sSq(M^{\cC})$ such that $\sSq(\partial M^{\cC})\cdot Z = \sSq(M^{\cC})$. Since $\iota_\ast \colon \sSq(M)\to \sSq( M^{\cC})$ is surjective \cite{Prz00}, there exists a finite subset $Y$ of $\sSq(M)$ such that $\iota_\ast(Y) = Z$. 
Note that $\partial M^{\cC}$ is obtained from $\partial M$ in the following way. {For a small open neighborhood $N(\cC)$ of $\cC$}, $\partial M\setminus N(\cC)$ is a surface with 2$|\cC|$ circle boundary components. Then, $\partial M^{\cC}$ is obtained from $\partial M\setminus N(\cC)$ by gluing 2$|\cC|$ disks along all these circle boundary components. Thus any diagrams in $\partial  M^{\cC}$ can be pushed into $\partial  M\setminus N(\cC)$. Then we have $\sSq(M^{\cC})=\sSq(\partial M^{\cC})\cdot Z \subset \iota_*(\sSq(\partial M)_{\cC}\cdot Y)$.

(3) 
From (2), we know Conjecture \ref{conj} implies Conjecture \ref{stro}. Obviously, Conjecture \ref{stro} implies Conjecture \ref{str}.
Note that for any oriented connected compact  3-manifold $M$,  there exists an attaching system $(H_g,\cC)$  such that $M $ (maybe after cutting out some open 3-balls) is isomorphic to $\Ha$, see e.g. \cite[Theorem 3.1.10]{SSS05}. Then from (1), Conjecture \ref{str} implies Conjecture \ref{conj}.
\end{proof}

\begin{prop}\label{prop2.4}
 Suppose Conjecture \ref{stro} holds for an attaching system $(M,\cC)$, and $\varphi$ is any diffeomorphism from $M$ to itself. Then Conjecture \ref{stro} also holds for $(M,\varphi(\cC))$.
\end{prop}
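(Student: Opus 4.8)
The plan is to transport a finite spanning set for $(M,\cC)$ to one for $(M,\varphi(\cC))$ by exploiting the functoriality of skein modules under diffeomorphisms, together with the compatibility of the boundary action with such diffeomorphisms. First I would record that any self-diffeomorphism $\varphi$ of $M$ carries $\partial M$ to itself and therefore induces $\bQ(q)$-linear isomorphisms $\varphi_\ast\colon\sSq(M)\to\sSq(M)$ and $\varphi_\ast\colon\sSq(\partial M)\to\sSq(\partial M)$, the latter being an algebra isomorphism. These are compatible with the boundary action in the sense that $\varphi_\ast(\alpha\cdot\beta)=\varphi_\ast(\alpha)\cdot\varphi_\ast(\beta)$ for all $\alpha\in\sSq(\partial M)$ and $\beta\in\sSq(M)$, which follows from the definition of the action once $\varphi$ is chosen (up to isotopy) to respect a collar of $\partial M$, so that it sends the disjoint-union picture defining $\alpha\cdot\beta$ to the one defining $\varphi_\ast(\alpha)\cdot\varphi_\ast(\beta)$.

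Next I would extend $\varphi$ over the attached $2$-handles. Since $\varphi$ maps the framed curves $\cC\subset\partial M$ diffeomorphically onto $\varphi(\cC)\subset\partial M$, it extends to a diffeomorphism $\Phi\colon M^{\cC}\to M^{\varphi(\cC)}$ carrying each handle attached along a curve $c\in\cC$ to the handle attached along $\varphi(c)$. Writing $\iota\colon M\to M^{\cC}$ and $\iota'\colon M\to M^{\varphi(\cC)}$ for the natural embeddings, we have $\Phi\circ\iota=\iota'\circ\varphi$, and hence on skein modules $\Phi_\ast\circ\iota_\ast=\iota'_\ast\circ\varphi_\ast$, where $\Phi_\ast\colon\sSq(M^{\cC})\to\sSq(M^{\varphi(\cC)})$ is an isomorphism.

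I would then observe that $\varphi_\ast$ carries $\sSq(\partial M)_{\cC}$ onto $\sSq(\partial M)_{\varphi(\cC)}$: by definition the former is spanned by diagrams in $\partial M$ disjoint from $\cC$, and $\varphi$ sends such a diagram to one disjoint from $\varphi(\cC)$, so $\varphi_\ast$ restricts to an isomorphism between the two subalgebras. Combined with the multiplicativity of $\varphi_\ast$ for the action, this gives $\varphi_\ast(\sSq(\partial M)_{\cC}\cdot Y)=\sSq(\partial M)_{\varphi(\cC)}\cdot\varphi_\ast(Y)$ for any subset $Y\subset\sSq(M)$. To conclude, let $Y$ be a finite subset with $\iota_\ast(\sSq(\partial M)_{\cC}\cdot Y)=\sSq(M^{\cC})$, furnished by Conjecture \ref{stro} for $(M,\cC)$, and set $Y'=\varphi_\ast(Y)$, which is again finite. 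Chasing the identities above yields
\begin{align*}
\iota'_\ast\big(\sSq(\partial M)_{\varphi(\cC)}\cdot Y'\big)
&=\iota'_\ast\big(\varphi_\ast(\sSq(\partial M)_{\cC}\cdot Y)\big)
=\Phi_\ast\big(\iota_\ast(\sSq(\partial M)_{\cC}\cdot Y)\big)\\
&=\Phi_\ast\big(\sSq(M^{\cC})\big)=\sSq(M^{\varphi(\cC)}),
\end{align*}
the last equality because $\Phi_\ast$ is surjective, so $Y'$ is the desired finite spanning set for $(M,\varphi(\cC))$.

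The argument is essentially a naturality diagram chase, and the only genuine content beyond bookkeeping is the extension of $\varphi$ over the $2$-handles and the verification that $\varphi_\ast$ intertwines the two boundary actions. I expect the main (and only minor) obstacle to be stating this naturality cleanly: the module action is defined through a collar of $\partial M$ that $\varphi$ need not preserve on the nose, so one first isotopes $\varphi$ to respect the chosen collar before invoking multiplicativity.
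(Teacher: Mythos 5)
Your proof is correct and follows essentially the same route as the paper, whose entire argument is the single identity $\varphi_*(\sSq(\partial M)_{\cC}\cdot Y)=\sSq(\partial M)_{\varphi(\cC)}\cdot\varphi_*(Y)$; you have simply spelled out the supporting details (extension of $\varphi$ over the $2$-handles, naturality $\Phi_\ast\circ\iota_\ast=\iota'_\ast\circ\varphi_\ast$, and the final chase) that the paper leaves implicit.
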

\begin{proof}
For any subset $Y$ of $\sSq(M)$, we have 
$\varphi_*(\sSq(\partial M)_{\cC}\cdot Y)=\sSq(\partial M)_{\varphi(\cC)} \cdot\varphi_*(Y)$. 
This completes the proof.
\end{proof}

\begin{prop}\label{disk}
If $\cC$ consists of closed curves such that all the curves bound embedded disks in $H_g$, then Conjecture \ref{str} holds. 
\end{prop}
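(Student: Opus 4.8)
The plan is to reduce the statement to the finiteness conjecture for the closed-up manifold $\Ha$, and then to recognize $\Ha$ as a very simple $3$-manifold. Applying part (2) of Theorem \ref{equal} to $M=H_g$, together with the obvious implication that Conjecture \ref{stro} forces Conjecture \ref{str}, it suffices to prove that $\Ha$ satisfies Conjecture \ref{conj}. Thus the whole problem becomes: when every curve of $\cC$ bounds a disk in $H_g$, show that $\sSq(\Ha)$ is finitely generated over $\sSq(\partial \Ha)$.

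First I would put the attaching curves into normal form. Since the curves of $\cC$ are disjoint and each bounds a disk in $H_g$, a standard innermost-disk argument lets me choose the disks $D_1,\dots,D_k$ they bound to be pairwise disjoint and properly embedded, so that cutting $H_g$ along $\bigcup_i D_i$ yields a disjoint union of handlebodies. I would then track the $2$-handles through this cut: each is attached along $c_i=\partial D_i$, and the core of the handle together with $D_i$ forms a $2$-sphere $S_i\subset \Ha$. Using handle cancellation (a $2$-handle attached along the meridian of a $1$-handle of $H_g$ cancels that $1$-handle) and keeping careful track of parallel and of separating curves, I would establish the topological normal form: $\Ha$ is diffeomorphic to a connected sum and boundary connected sum of handlebodies with finitely many $S^1\times S^2$ summands (and possibly some $S^3$ summands or removed balls, which do not affect the skein module). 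Concretely, a nonseparating meridian or a redundant parallel copy contributes an $S^1\times S^2$ summand, while a separating disk splits off a handlebody summand along a connect-sum sphere. With this normal form, finiteness for the pieces is available: a genus-$g'$ handlebody is $\Sigma_{0,g'+1}\times[0,1]$, whose skein module is generated over the boundary algebra by the empty skein since every link isotopes into the boundary surface; and the closed factor satisfies $\sSq(S^1\times S^2)\cong \bQ(q)$ of rank one over $\bQ(q)$ by Theorem \ref{GJS} (over the field $\bQ(q)$ the Hoste--Przytycki torsion vanishes), with removal of a ball leaving the skein module unchanged.

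The main obstacle is the final combination step: controlling $\sSq$ of a connected sum in terms of the summands. The handlebody and $S^1\times S^2$ factors are individually understood, but passing a link across the connect-sum spheres — and in particular showing that the $S^1\times S^2$ summands contribute only a finite-rank amount to the number of module generators — is where the real work lies, with the precise normal form for parallel or separating curves as a secondary delicate point. I expect to handle the crossing by an innermost-disk argument on the connect-sum spheres combined with the skein relations, rewriting any link in terms of links split by the spheres. An alternative, more hands-on route avoids the connect-sum analysis entirely: present every class of $\sSq(\Ha)$ by a multicurve on $\partial H_g$ (using surjectivity of $\iota_\ast$ and the fact that $H_g$ is a thickened surface), and then use handle slides over the $2$-handle cores to reduce the geometric intersection number with $\cC$, the base case of disjointness from $\cC$ landing in $\pHa\cdot\{\emptyset\}$; there the obstacle becomes the handle-slide reduction lemma and its interaction with the skein relations.
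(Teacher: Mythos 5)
Your strategy is sound and would work, but it is a genuinely longer route than the paper's, which disposes of the proposition in two lines: by \cite[Theorem 6.3]{Prz00}, any skein in $H_g^{\cC}$ is a linear combination of skeins disjoint from the embedded disks bounded by the curves of $\cC$, and any such skein can be isotoped into a collar of the part of the boundary avoiding $\cC$, so one simply takes $X=\{\emptyset\}$. You instead pass through Theorem \ref{equal}(2) to reduce to Conjecture \ref{conj} for $\Ha$, establish the (correct) normal form of $\Ha$ as a connected sum of handlebodies and $S^1\times S^2$ summands, and then face the problem of splitting links along the connect-sum spheres. That last step --- which you rightly identify as ``where the real work lies'' and propose to handle by innermost disks plus skein relations --- is exactly the content of the Przytycki theorem the paper cites (and of the sphere-sliding computation in Appendix \ref{sec:sliding}, where invertibility of $q^n-q^{-n-6}$ over $\bQ(q)$ is what lets the intersection number with a sphere be reduced); so your proof is not independent of that input, only a more elaborate packaging of it. Two small points: the detour through the connected-sum decomposition and the $S^1\times S^2$ factors is unnecessary, since once links are split off the disks they already lie in handlebody pieces and can be pushed to $\partial H_g\setminus\cC$ directly; and $\sSq(S^1\times S^2)\cong\bQ(q)$ follows from \cite{HP95} over $\bQ(q)$ rather than from Theorem \ref{GJS}, which only gives finite rank (though finite rank would suffice for your argument). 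What your approach buys is a cleaner topological picture of $\Ha$; what it costs is reproving, rather than citing, the sphere-crossing lemma.
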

\begin{proof}
Any skein in $H_g^{\cC}$ is a linear sum of skeins in $H_g^{\cC}$ having no intersection with the embedded disks bounded by closed curves in $\cC$, see \cite[Theorem 6.3]{Prz00}. Then we can set $X = \{\emptyset\}$, where $\emptyset$ represents the empty skein.
\end{proof}

Let $\alpha$ be an embedded closed curve in a surface $\Sigma$. We say $\alpha$ is {\bf trivial} if it bounds an embedded disk, and say $\alpha$ is {\bf separating} if $\Sigma\setminus\alpha$ has one more component than $\Sigma$.

Define $n(\cC)$ to be the maximum number of non-trivial and non-separating closed curves which are mutually not isotopic in $\cC$.

For any connected surface $\Sigma$, let 
$g(\Sigma)$ denote the sum of genera of all the connected components of $\Sigma$. 

\begin{cor}\label{cor2.6}
Conjecture \ref{str} holds when $g(\partial  H_g)=n(\cC)$ and $\cC$ generates $H_1(H_g)$.
\end{cor}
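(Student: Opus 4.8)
The plan is to prove Conjecture \ref{conj} for the filled manifold $H_g^{\cC}$ directly and then pull this back to the attaching system. For $M=H_g$ Conjecture \ref{stro} is literally Conjecture \ref{str}, since the ambient algebra $\sSq(\partial H_g)_{\cC}$ is by definition $\pHa$; hence by Theorem \ref{equal}(2) it suffices to show that $\sSq(H_g^{\cC})$ is finitely generated over $\sSq(\partial H_g^{\cC})$. My strategy is to prove that under the two hypotheses the boundary $\partial H_g^{\cC}$ consists only of $2$-spheres, which collapses this module statement into an honest finite-dimensionality statement that can then be handed to Theorem \ref{GJS}.

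First I would extract a cut system from the hypotheses. Every trivial or separating curve of $\cC$ is null-homologous in $\partial H_g$, hence maps to $0$ in $H_1(H_g)\cong\bZ^g$; therefore the image of $\cC$ in $H_1(H_g)$ is generated by the classes of the non-trivial non-separating curves, of which there are at most $n(\cC)=g$ isotopy types. Since $\cC$ generates $\bZ^g$, there must be exactly $g$ such isotopy classes $\gamma_1,\dots,\gamma_g$, and their classes form a $\bZ$-basis of $H_1(H_g)$. Reducing modulo $2$ shows $[\gamma_1],\dots,[\gamma_g]$ are linearly independent in $H_1(\partial H_g;\bZ/2)$, so no subcollection separates and the complement $\partial H_g\setminus(\gamma_1\cup\dots\cup\gamma_g)$ is connected; an Euler-characteristic count then identifies it with a sphere carrying $2g$ holes.

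Next I would compute the boundary after filling. The surface $\partial H_g^{\cC}$ is obtained from $\partial H_g$ by surgery along all curves of $\cC$ simultaneously (removing an annular neighborhood of each curve and capping with two disks). Surgering along the cut system $\gamma_1,\dots,\gamma_g$ caps the $2g$ holes of the planar complement and yields $S^2$; every remaining curve of $\cC$ is disjoint from the $\gamma_j$, hence descends to a curve on this $S^2$, and surgery along disjoint curves in a $2$-sphere again produces a disjoint union of $2$-spheres. Thus $\partial H_g^{\cC}$ is a disjoint union of $2$-spheres, so $\sSq(\partial H_g^{\cC})=\bQ(q)$ and Conjecture \ref{conj} for $H_g^{\cC}$ is equivalent to $\sSq(H_g^{\cC})$ being finite dimensional over $\bQ(q)$. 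Capping each boundary sphere with a $3$-ball gives a closed $3$-manifold $N$ with $\sSq(H_g^{\cC})\cong\sSq(N)$, since capping a $2$-sphere boundary component by a ball does not change the skein module, and finiteness of $\sSq(N)$ is then exactly Theorem \ref{GJS}.

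The main obstacle I anticipate is not the homological bookkeeping but the two geometric facts feeding the endgame. The first is the clean passage from ``the $g$ distinguished curves are mod-$2$ independent'' to ``all of the surgeries produce only spheres'': this must account for parallel copies of the $\gamma_j$ and for essential separating curves in $\cC$ (the latter genuinely occur, but are absorbed because once the cut system has already reduced the surface to $S^2$ the leftover curves live on a sphere). The second is the invariance $\sSq(H_g^{\cC})\cong\sSq(N)$ under capping spheres, which requires justifying that links and skein relations can always be isotoped off the attached balls. Once these two points are secured, the reduction through Theorem \ref{equal}(2) closes the argument.
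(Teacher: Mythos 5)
Your proof is correct and follows essentially the same route as the paper: reduce via Theorem \ref{equal}(2) to Conjecture \ref{conj} for $H_g^{\cC}$, observe that $H_g^{\cC}$ is a closed $3$-manifold minus balls (so its boundary skein algebra is trivial and its skein module agrees with that of the closed manifold), and invoke Theorem \ref{GJS}. The only difference is that the paper simply asserts the key topological fact that $\partial H_g^{\cC}$ is a union of spheres, whereas you supply the homological cut-system argument justifying it.
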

\begin{proof}
We know
 $H_g^{\cC}$ has the same skein module as a closed 3-manifold, where $H_g^{\cC}$ is a closed $3$-manifold minus $3$-balls. Then Theorem \ref{GJS} and Theorem \ref{equal} complete the proof.
\end{proof}

Let $(M_1,\cC_1),(M_2,\cC_2)$
be two attaching systems. For each $i=1,2$, suppose $D_i$  is an embedded disk in $\partial M_i$ such that $D_i$ has no intersection with $\cC_i$. We use an orientation reversing diffeomorphism to glue $D_1$ and $D_2$. 
For the pair ${\bf D}=(D_1,D_2)$, denote the resulting 3-manifold as $M_1\#_{\bf D}M_2$, and the gluing disk, denoted as $D$, is a properly embedded disk in $M_1\#_{\bf D}M_2$. 
For each $i=1,2$,
suppose $D_i$ is contained in the boundary component $U_i$.
Consider $(M_1\#_{\bf D}M_2,\cC)$ with 
$\cC = \cC_1\cup\cC_2$ if $M_i^{\cC_i}$ has a sphere boundary component containing $D_i$ for $i=1$ or $2$, or
$\cC = \cC_1\cup\cC_2\cup \{\partial D\}$ otherwise.


\begin{prop}
If Conjecture \ref{stro} holds for both $(M_1,\cC_1)$ and $(M_2,\cC_2)$,
then it also holds for $(M_1\#_{\bf D}M_2,\cC)$.
\end{prop}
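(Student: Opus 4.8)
The plan is to show that, in every case, the capped manifold $(M_1\#_{\bf D}M_2)^{\cC}$ is an interior connected sum of (slight modifications of) $M_1^{\cC_1}$ and $M_2^{\cC_2}$, and then to prove that Conjecture \ref{conj} is inherited under interior connected sum; the hypotheses on the two factors enter through Theorem \ref{equal}.

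First I would identify the topology. Since the curves of $\cC_i$ are disjoint from $D_i$, the $2$-handle attachments along $\cC_1\cup\cC_2$ commute with the gluing along $\bf D$, so writing $\natural$ for the boundary connected sum along $\bf D$ we have $(M_1\#_{\bf D}M_2)^{\cC}=(M_1^{\cC_1}\natural M_2^{\cC_2})^{\cC'}$ with $\cC'=\{\partial D\}$ in the generic case and $\cC'=\varnothing$ in the exceptional case. I claim that in both cases the result is an interior connected sum \[ (M_1\#_{\bf D}M_2)^{\cC}\ \cong\ N_1\,\#\,N_2, \] where $N_i=M_i^{\cC_i}$ in the generic case, while in the exceptional case (say $D_1$ lies on a sphere boundary component $U_1$ of $M_1^{\cC_1}$) one takes $N_1$ to be $M_1^{\cC_1}$ with $U_1$ capped off by a $3$-ball and $N_2=M_2^{\cC_2}$. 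This rests on two standard facts: capping the boundary of the splitting disk of a boundary connected sum by a $2$-handle produces the interior connected sum, i.e. $(X\natural Y)^{\{\partial D\}}\cong X\# Y$; and a boundary connected sum along a disk contained in a sphere boundary component $U_1$ agrees, after capping $U_1$ by a ball, with the interior connected sum, which is exactly why the extra handle $\partial D$ is omitted there. In either case $(M_1\#_{\bf D}M_2)^{\cC}$ carries a separating $2$-sphere $S$ with sides $N_1$ and $N_2$, and $\partial N_1\sqcup\partial N_2$ as its boundary.

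Next I would reduce to the two factors. Rerunning the computation in the proof of Theorem \ref{equal}(1) — which only uses that a boundary curve avoiding $\cC_i$ is carried by $\iota_{i\ast}$ to a boundary curve of $M_i^{\cC_i}$ — the hypothesis that $(M_i,\cC_i)$ satisfies Conjecture \ref{stro} gives that $M_i^{\cC_i}$ satisfies Conjecture \ref{conj}. Capping a sphere boundary component by a ball preserves the finiteness property, since the inclusion induces a surjection on skein modules and the $\sSq(S^2)=\bQ(q)$ factor of the boundary algebra is trivial; hence each $N_i$ satisfies Conjecture \ref{conj}, and I fix finite generating sets $Z_i\subset\sSq(N_i)$ over $\sSq(\partial N_i)$. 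By Theorem \ref{equal}(2) it now suffices to verify Conjecture \ref{conj} for $(M_1\#_{\bf D}M_2)^{\cC}=N_1\# N_2$, that is, to show that finiteness is preserved under interior connected sum. Writing $N_1\# N_2=(N_1\natural N_2)^{\{\partial D\}}$ via the splitting disk $D$ of $S$, whose boundary bounds the properly embedded disk $D$, Przytycki's theorem \cite[Theorem 6.3]{Prz00} — the same input used in Proposition \ref{disk} — shows that every skein of $N_1\# N_2$ is a linear combination of skeins disjoint from $D$, hence of split skeins $a_1\sqcup a_2$ with $a_i$ lying on the side $N_i$ of $S$. Expanding each $a_i$ in terms of $Z_i$ and the action of $\sSq(\partial N_i)$, and using $\partial(N_1\# N_2)=\partial N_1\sqcup\partial N_2$ so that $\sSq(\partial(N_1\# N_2))=\sSq(\partial N_1)\otimes\sSq(\partial N_2)$ with the two tensor factors supported near the two sides, the finite set $\{z_1\sqcup z_2: z_1\in Z_1,\ z_2\in Z_2\}$ generates $\sSq(N_1\# N_2)$ over its boundary algebra. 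This gives Conjecture \ref{conj} for $(M_1\#_{\bf D}M_2)^{\cC}$, and hence Conjecture \ref{stro} for $(M_1\#_{\bf D}M_2,\cC)$.

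The step I expect to be the main obstacle is the connected-sum splitting: justifying that, modulo skein relations, every skein reduces to a split skein across the generally essential sphere $S$. This is precisely where Przytycki's $2$-handle/disk-sliding theorem is indispensable, and one must check that the disk $D$ is genuinely properly embedded with $\partial D$ bounding it, so that \cite[Theorem 6.3]{Prz00} applies verbatim. A secondary but necessary point is the bookkeeping that $\sSq(\partial N_1)$ and $\sSq(\partial N_2)$ sit inside $\sSq(\partial(N_1\# N_2))$ as commuting subalgebras acting independently on the two sides of $S$, so that the product action realizes every $a_1\sqcup a_2$ from the finite generating set above.
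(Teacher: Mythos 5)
Your proof is correct and follows essentially the same route as the paper: the decisive step in both is to slide every skein off the splitting disk/sphere (Przytycki's $2$-handle sliding in the generic case, sphere sliding in the exceptional sphere-boundary case) and then feed the split pieces to the hypotheses on the two factors. The only difference is packaging --- you detour through Conjecture \ref{conj} for the interior connected sum $N_1\# N_2$ and translate back via Theorem \ref{equal}(2), whereas the paper combines the finite sets for $(M_1,\cC_1)$ and $(M_2,\cC_2)$ directly --- and this changes nothing essential.
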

\begin{proof}
Suppose $M_i^{\cC_i}$ has a sphere boundary component containing $D_i$ for $i=1$ or $2$. 
Without loss of generality, we assume 
that $D_1$ lies in a sphere boundary component $S^2$ of $M_1^{\cC_1}$.
By applying the sphere sliding, explained in Appendix \ref{sec:sliding}, to any skein $\alpha$ in $(M_1\#_{\bf D}M_2)^{\cC}$, we get, in $\sSq((M_1\#_{\bf D}M_2)^{\cC})$,
 $\alpha$ is a linear sum of skeins having no intersection with $D$.
 Then the assumption implies Conjecture \ref{str} holds for $(M_1\#_{\bf D}M_2,\cC)$.

 Suppose $\cC = \cC_1\cup\cC_2\cup \{\partial D\}$. For this case, $\partial D\in\cC$.  The same argument, as above, works by doing handle sliding along the $2$-handle attached to $\partial D$, see also \cite{Prz00}.
\end{proof}

\subsection{Dehn--Thurston coordinates}
 In this subsection, we recall the definition of Dehn--Thurston coordinates, see e.g. \cite{PH92} for more details. 

A \textbf{multicurve} on a surface is a disjoint union of simple closed curves on the surface. 
A multicurve is \textbf{essential} 
if it  has no null-homotopic component.

Let $\Sigma_g$ denote the orientable closed surface of genus $g$ and $\{C_i\}_{i=1}^{3g-3}$ be the set of non-trivial simple closed curves on $\Sigma_g$ such that any two of them are not homotopic, known as a {\bf pants decomposition} of $\Sigma_g$. 
 Take a small closed neighborhood $N(C_i)$ of $C_i$ in $\Sigma_g$ ($i=1,\dots 3g-3$). 
After we remove $\sqcup_{i=1}^{3g-3} \Int N(C_i)$ from $\Sigma_g$, the resulting surface is a disjoint union of pairs of pants, where a pair of pants is a surface homeomorphic to  $S^2$ minus 3 open disks. 
A {\bf dual graph} is a trivalent graph $\Gamma$ on $\Sigma_g$ such that 
$C_i\ (i=1,\dots,3g-3)$ intersects with $\Gamma$ just once and the intersection of $\Gamma$ and each pair of pants has just 1 trivalent vertex.  

For a multicurve $\gamma$ on $\Sigma_g$, let $n_i(\gamma)\ (i=1,2,\dots,3g-3)$ be the geometric intersection number of $C_i$ and $\gamma$, i.e. the minimum of the intersection number of $C_i$ and $\gamma'$ among $\gamma'$ homotopic to $\gamma$. 

For an essential multicurve of $\Sigma_g$, 
it is in \textbf{general position} if each component of the intersection of the multicurve and the pairs of pants of $\Sigma\setminus(\sqcup_{i=1}^{3g-3} \Int N(C_i))$ is one of the curves depicted in Figure \ref{fig;curves_pants}. Here, any simple closed curve parallel to $C_i$ is in $N(C_i)$. We also impose that it intersects with $C_i$ and $\Gamma$ transversely.  
It is known that one can isotope any essential multicurve to be in general position and an essential multicurve $\gamma$ in  general position realizes the geometric intersections, i.e. $n_i(\gamma)=\#\{\gamma\cap C_i\}$. 
We isotope an essential multicurve $\gamma$ to be in general position, denoted by $\gamma'$. 
Let $t_i(\gamma)\ (i=1,\dots,3g-3)$ be the geometric intersection number of $\gamma'\cap N(C_i)$ and $\Gamma\cap N(C_i)$ if $\gamma'\cap N(C_i)$ consists of only loops, and be the oriented intersection number of $\gamma'\cap N(C_i)$ and $\Gamma\cap N(C_i)$ defined by 
summing $+1$ (resp. $-1$) for 
$\begin{array}{c}\includegraphics[scale=0.13]{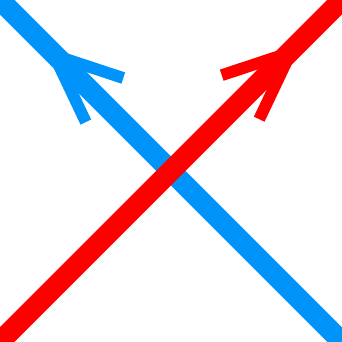}\end{array}$
(resp. $\begin{array}{c}\includegraphics[scale=0.13]{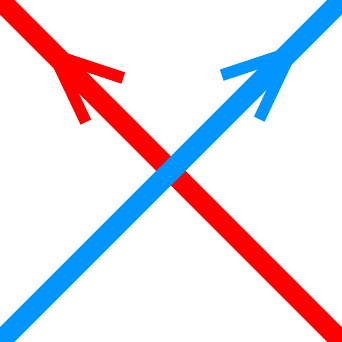}\end{array}$) over all crossings, where the blue curve is $\Gamma\cap N(C_i)$ and the red curve is a part of $\gamma'\cap N(C_i)$ and the orientations of $\Gamma\cap N(C_i)$ and all the arcs of $\gamma'\cap N(C_i)$ are assigned to be inward/outward on the same boundary component of $N(C_i)$. 
It is known that $t_i(\gamma)$ is well-defined. 
The \textbf{Dehn--Thurston coordinate} of an essential multicurve $\gamma$ 
with respect to $\{C_i\}$ and $\Gamma$ is the coordinate $$(n_1(\gamma),\dots,n_{3g-3}(\gamma), t_1(\gamma),\dots,t_{3g-3}(\gamma))\in \bN^{3g-3}\times \bZ^{3g-3}.$$ 
In the following, we abbreviate it to the DT coordinate and let $\DT(\gamma)$ denote the coordinate of $\gamma$. 
It is known that there is a one-to-one correspondence between the set of the DT coordinates of essential multicurves on $\Sigma_{g}$ and the set of isotopy classes of essential multicurves on $\Sigma_g$. 

\begin{figure}[h]
      \centering
      \includegraphics[scale=0.35]{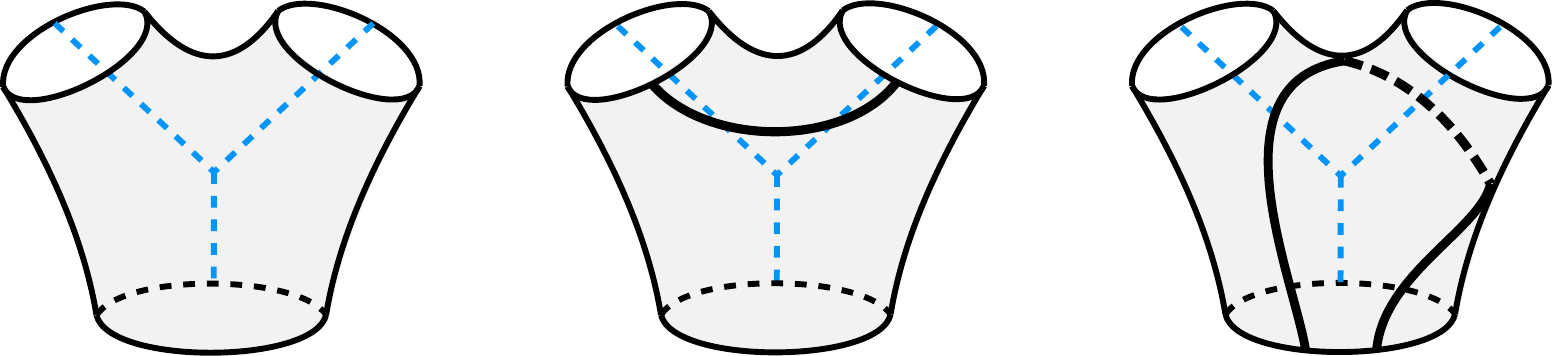}
      \caption{Left: a pair of pants with a part of the dual graph (the blue graph), Middle and Right: each bold arc is an embedded arc in a pair of pants whose endpoints avoid the part of the dual graph. }\label{fig;curves_pants}
  \end{figure}

 We will use the pants decompositions depicted in Figures \ref{Fig;pantsdecomp_genus2} and \ref{fig;edges_pants_g3} for $\partial H_2$ and $\partial H_3$ respectively.
\begin{figure}[ht]\centering\includegraphics[width=130pt]{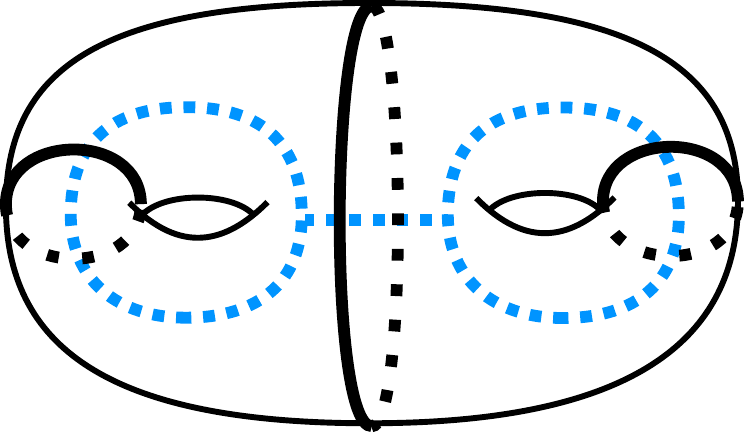}\caption{The bold curves on $\partial H_2$ are $C_1,C_3,C_2$ from left to right and the blue graph is a dual graph.}\label{Fig;pantsdecomp_genus2}\end{figure}

\subsection{Generating sets and bases for $H_2$ and $H_3$} \label{sec;gene_bases} 

For any non-negative integer $m$, let $\Sigma_0^{m}$ denote the surface obtained from $S^2$ by removing $m$ open disks. Then $H_g$ can be regarded as $\Sigma_{0}^{g+1}\times [0,1]$.

Let $x,y,z$ be the peripheral loops of $\Sigma_0^3$ depicted as in Figure \ref{fg_xyz}. 
We also use $x,y,z$ to denote diagrams on $\Sigma_0^{3}\times\{1\}\subset \partial (\Sigma_0^{3}\times [0,1])=\partial H_{2}$. So we can also regard $x,y,z$ as elements in $\sSq(\partial H_2)$.

\begin{figure}[h]
      \centering
      \includegraphics[scale=0.4]{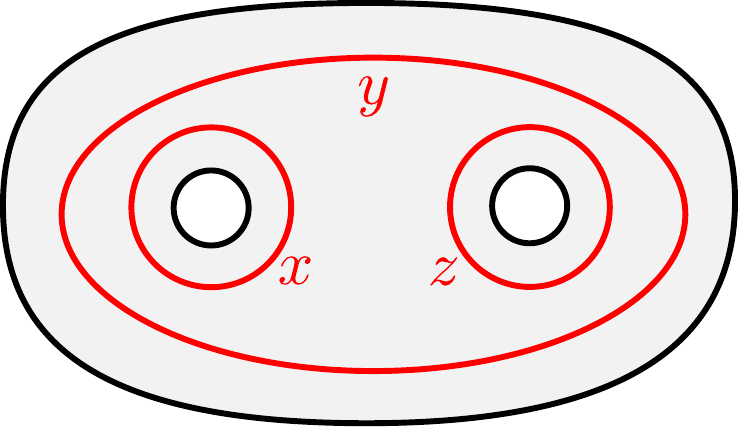}
      \caption{$\Sigma_0^3$ and algebraic generators $x,y,z$ in $\Sigma_0^3$}\label{fg_xyz}
  \end{figure}

The ground ring $\mathcal R=\mathbb Z[q^{\pm 1}]$ (the Laurent polynomial ring in $q$ with integer coefficients) in Lemmas \ref{lem-skeinalgebra}--\ref{relation}.
The following result is well-known. 

\begin{lem}\label{lem-skeinalgebra}For any 
The skein algebra $\sSq(\Sigma_0^{3})$ is isomorphic to $\mathcal{R}[x,y,z]$.
\end{lem}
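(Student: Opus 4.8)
The plan is to exhibit the obvious $\mathcal{R}$-algebra homomorphism
$\Phi\colon \mathcal{R}[x,y,z]\to \sSq(\Sigma_0^{3})$ that sends the three indeterminates to the peripheral loops $x,y,z$, and to prove it is an isomorphism. First I would check that $\Phi$ is well-defined as a map out of the \emph{commutative} polynomial ring: the three boundary-parallel loops $x,y,z$ admit pairwise disjoint isotopy representatives on $\Sigma_0^{3}$, so they commute in $\sSq(\Sigma_0^{3})$ and their products are computed simply by taking disjoint unions. Hence there is no ordering ambiguity and $\Phi$ is a ring map.

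The geometric heart of the argument is a classification of simple closed curves. On the pair of pants $\Sigma_0^{3}$, every non-contractible simple closed curve is isotopic to exactly one of the three boundary-parallel loops $x$, $y$, or $z$, and these three are pairwise non-isotopic; this is the standard rigidity of the pair of pants (it admits no essential curve beyond its boundary-parallel ones, which is why a pants decomposition is maximal). Consequently every essential multicurve on $\Sigma_0^{3}$ is, up to isotopy, a disjoint union of $a$ parallel copies of $x$, $b$ of $y$, and $c$ of $z$ for a \emph{unique} triple $(a,b,c)\in\bN^{3}$; uniqueness follows by reading off $(a,b,c)$ as the number of components parallel to each boundary component (equivalently from the geometric intersection numbers with suitable properly embedded arcs). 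In the algebra this multicurve is precisely $\Phi(x^{a}y^{b}z^{c})$.

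To finish I would invoke the structural theorem for Kauffman bracket skein algebras of surfaces: $\sSq(\Sigma_0^{3})$ is a free $\mathcal{R}$-module whose basis is the set of isotopy classes of essential multicurves, with each contractible component absorbed as a scalar factor $-q^{2}-q^{-2}$ via relation (B) (see \cite{Prz00}). Combining this with the previous paragraph, the family $\{x^{a}y^{b}z^{c}\mid (a,b,c)\in\bN^{3}\}$ is exactly a free $\mathcal{R}$-basis of $\sSq(\Sigma_0^{3})$, with the empty profile $(0,0,0)$ corresponding to the empty skein $1$. Therefore $\Phi$ carries the monomial $\mathcal{R}$-basis of $\mathcal{R}[x,y,z]$ bijectively onto an $\mathcal{R}$-basis of $\sSq(\Sigma_0^{3})$, so $\Phi$ is simultaneously surjective and injective, hence an isomorphism.

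I expect no genuine obstacle in this proof: once the curve classification and the freeness theorem are in place, the algebra is formal, and the mild point requiring care is only the bijectivity between profiles $(a,b,c)$ and multicurve isotopy classes, which is immediate from the classification. The real purpose of the lemma is to fix the explicit generators $x,y,z$ and the identification $\sSq(\Sigma_0^{3})\cong\mathcal{R}[x,y,z]$ for use in the subsequent lemmas.
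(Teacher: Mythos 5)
Your proof is correct. The paper itself offers no proof of this lemma --- it is stated as ``well-known'' --- and your argument (commutativity of the disjoint peripheral loops, the classification of essential simple closed curves on a pair of pants as boundary-parallel, and Przytycki's freeness theorem giving the multicurve basis $\{x^{a}y^{b}z^{c}\}$) is exactly the standard proof that the authors are implicitly relying on.
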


For a polynomial $P(x,z,y)=\sum_{i,j,k}c(i,j,k)x^i z^j y^k\in\bQ(q)[x,z,y],\ c(i,j,k)\in \bQ(q)$, its {\bf $(x,z;y)$-degree} is the maximum number among $i+j+2k$ with $c(i,j,k)\neq 0$. 

Let $s_1,s_2,s_3,s_{12}, s_{13}, s_{23}, s_{123}$ be the simple closed curves in $\Sigma_0^{4}$ as depicted in Figure \ref{generate1}.
\begin{figure}[h]
      \centering
      \includegraphics[scale=0.3]{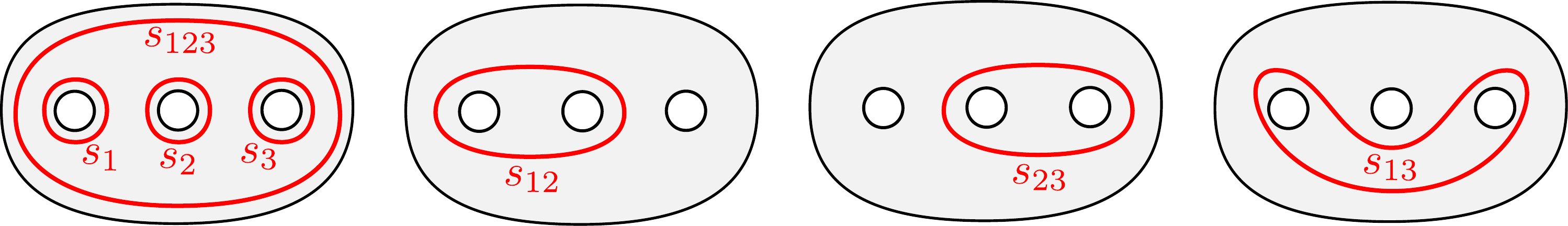}
      \caption{ The algebraic generators $s_1,s_2,s_3,s_{12}, s_{13}, s_{23}, s_{123}$ of $\sSq(\Sigma_{0}^4)$.}\label{generate1}
  \end{figure}

For later convenience, for any $\vec{k}=(k_1,k_2,k_3,k_{12},k_{13},k_{23},k_{123})\in\mathbb{N}^7$, put \begin{align}
s^{\vec{k}}&:=s_1^{k_1}s_2^{k_2}s_3^{k_3} s_{12}^{k_{12}} s_{13} ^{k_{13}}s_{23}^{k_{23}}s_{123}^{k_{123}}\in \sSq(\Sigma_0^{4}),\nonumber\\ 
\text{s}(\vec{k})&:=3(k_1+k_2+k_3+k_{123}) + 4(k_{12}+k_{13}+k_{23}),\nonumber\\
\text{s}'(\vec{k})&:=
2(k_1+k_2)+k_3 +2k_{12} + 3(k_{13}+k_{23} + k_{123})
\label{s(vec)}.
\end{align}

Let $\Lambda$ denote the subset of $\mathbb{N}^7$ defined by 
$$\Lambda = \{(k_1,k_2,k_3,k_{12},k_{13},k_{23},k_{123})\in\mathbb{N}^7\mid k_{12}k_{13}k_{23} = 0\}.$$

\begin{lem}[\cite{BP00}]\label{lem;basis}
As a free module over $\mathcal{R}$, the skein algebra $\sSq(\Sigma_0^{4})$ has the  basis 
$\{s^{\vec{k}}\mid \vec{k}\in\Lambda\}.$
\end{lem}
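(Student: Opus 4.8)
The statement asserts both a spanning property and linear independence over $\mathcal R=\bZ[q^{\pm1}]$, and I would prove these separately, using throughout Przytycki's theorem that $\sSq(\Sigma_0^4)$ is a \emph{free} $\mathcal R$-module whose basis is the set of isotopy classes of essential multicurves (including the empty one). The geometric source of the constraint $k_{12}k_{13}k_{23}=0$ is that $s_{12},s_{13},s_{23}$ pairwise intersect on $\Sigma_0^4$, so a product involving all three is ``more resolvable'' than one missing a factor; this is what the algebra must record.

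For spanning, the first step is to establish that the seven curves generate $\sSq(\Sigma_0^4)$ as an $\mathcal R$-algebra. This is standard: resolving, via relation (A), the crossings of an arbitrary diagram with a fixed pants decomposition and inducting on geometric intersection number writes any link as an $\mathcal R$-combination of products of the seven listed curves. The four peripheral curves $s_1,s_2,s_3,s_{123}$ are boundary-parallel, hence central, so the issue is words in the three interior curves. Two families of local computations, obtained by resolving the two crossings created when one interior curve is pushed across another, then suffice: (i) quantum commutation relations rewriting $s_{13}s_{12}$, $s_{23}s_{13}$, $s_{23}s_{12}$ as their opposite orderings plus lower-order terms, which normal-order any interior word into the form $s_{12}^{k_{12}}s_{13}^{k_{13}}s_{23}^{k_{23}}$, and (ii) a triple-product relation expressing $s_{12}s_{13}s_{23}$ as an $\mathcal R$-combination of monomials $s^{\vec k}$ of strictly smaller weight $\mathrm{s}(\vec k)$ and interior degree $k_{12}+k_{13}+k_{23}\le 2$; relation (ii) is the quantum form of the Fricke identity for three $\mathrm{SL}_2$ matrices and is exactly the computation of \cite{BP00}. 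Given (i) and (ii), spanning by $\Lambda$ follows by induction on $\mathrm{s}(\vec k)$ (refined by a lexicographic order favouring the interior variables): any normal-ordered monomial with $k_{12},k_{13},k_{23}$ all positive contains a factor $s_{12}s_{13}s_{23}$, and substituting (ii) strictly lowers $\mathrm s$, so the rewriting terminates at monomials with $k_{12}k_{13}k_{23}=0$, i.e. at $\Lambda$.

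For linear independence I would expand each $s^{\vec k}$, $\vec k\in\Lambda$, in the multicurve basis by resolving all crossings, and extract a \emph{leading multicurve} with respect to the filtration by geometric intersection number with a fixed family of curves (in the spirit of the Dehn--Thurston coordinates above). For $\vec k\in\Lambda$ at most two of $s_{12},s_{13},s_{23}$ occur, so the parallel copies of the occurring interior curves, together with the peripheral components, resolve to a single maximal multicurve whose intersection data is an explicit injective function of $\vec k$; consequently the transition matrix from $\{s^{\vec k}\mid\vec k\in\Lambda\}$ to the multicurve basis is triangular with unit diagonal entries (powers of $q$ up to sign), and the $s^{\vec k}$ are $\mathcal R$-linearly independent. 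An equivalent route, closer to \cite{BP00}, is to invoke Bergman's diamond lemma for the presentation given by (i), (ii) and centrality: with $\mathrm s$ as termination order one checks that the overlap ambiguities (between two reordering rules, and between a reordering rule and the triple-product rule) resolve, whence the irreducible monomials --- precisely $\{s^{\vec k}\mid\vec k\in\Lambda\}$ --- form a basis.

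The crux of the argument, and the step I expect to absorb essentially all the work, is the explicit triple-product relation (ii): one must carry out the skein resolution of $s_{12}s_{13}s_{23}$ and verify that its leading monomial under $\mathrm s$ is exactly $s_{12}s_{13}s_{23}$, so that the reduction terminates on $\Lambda$ with neither redundancy nor collapse. In the independence half the parallel obstacle is the injectivity of the leading-multicurve map on $\Lambda$ (equivalently, confluence of the diamond-lemma ambiguities); once these are in hand, as they are in \cite{BP00}, the remainder is bookkeeping.
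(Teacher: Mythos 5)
The paper does not actually prove this lemma---it is quoted from \cite{BP00}---but your proposal reconstructs that proof along exactly the lines the paper itself uses where it touches the statement: your leading-multicurve argument for linear independence is precisely the content of the paper's Lemma \ref{inde} (see the remark following it, which notes that Lemma \ref{inde} gives the independence of the basis), and your spanning argument via centrality of the peripheral curves, normal ordering, and the quantized Fricke relation is the argument of \cite{BP00}, whose $q=1$ shadow appears as Lemma \ref{relation}. One small correction: substituting the triple-product relation does \emph{not} strictly lower the weight $\mathrm{s}$, since the term $s_1s_2s_3s_{123}$ on the right-hand side of Lemma \ref{relation} has $\mathrm{s}=12=\mathrm{s}(s_{12}s_{13}s_{23})$; the primary termination order must therefore be the interior degree $k_{12}+k_{13}+k_{23}$, which drops from $3$ to at most $2$---your parenthetical ``refined by a lexicographic order favouring the interior variables'' already supplies this, so the argument stands.
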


\begin{lem}[\cite{BP00}]\label{relation}
When $q = 1$, the skein algebra $\sS_1(\Sigma_0^{4})$ is a commutative algebra over $\mathbb{Z}$ generated by $s_1,s_2,s_3,s_{12},s_{13},s_{23},s_{123}$ subject to the following relation:
\begin{align*}
    s_{12}s_{13}s_{23} =& s_{12}^2 + s_{13}^2 + s_{23}^2 +s_{12}(s_1s_2+s_3s_{123}) + s_{13}(s_1s_3+s_2s_{123}) + s_{23}(s_2s_3+s_1s_{123})\\&
    +s_1s_2s_3s_{123} + s_1^{2} + s_2^{2} + s_3^{2}+s_{123}^{2}-4.
\end{align*}
\end{lem}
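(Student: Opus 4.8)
The plan is to specialize to $q=1$, where $\sS_1(\Sigma_0^{4})$ becomes the coordinate ring of an ${\rm SL}_2$-character variety, and then to verify the displayed equality as a classical trace relation. First I would record the two easy points. Commutativity at $q=1$ is already observed in the introduction, and the fact that $s_1,s_2,s_3,s_{12},s_{13},s_{23},s_{123}$ generate $\sS_1(\Sigma_0^{4})$ is immediate from Lemma \ref{lem;basis}, since every basis monomial $s^{\vec k}$ is a product of exactly these seven curves; equivalently it is the Fricke--Vogt theorem that the traces of $A,B,C,AB,AC,BC,ABC$ generate the ring of ${\rm SL}_2$-invariants of a triple $(A,B,C)$.

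Second, I would set up the identification. The four-holed sphere is homotopy equivalent to a wedge of three circles, so $\pi_1(\Sigma_0^{4})\cong F_3$, freely generated by loops $A,B,C$ around three of the four boundary components. At $q=1$ the skein algebra is the (reduced, hence honest) ${\rm SL}_2$-character ring of $F_3$, namely $\bZ[{\rm SL}_2^3]^{{\rm SL}_2}$, under the dictionary sending a simple closed curve $\gamma$ to $-\operatorname{tr}(\rho(\gamma))$; thus $s_1,\dots,s_{123}$ correspond to $-\operatorname{tr}(A),\dots,-\operatorname{tr}(ABC)$. It therefore suffices to prove the corresponding identity on ${\rm SL}_2^3$ and translate the signs.

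Third comes the computation. The unique relation records that $\operatorname{tr}(ABC)$ and $\operatorname{tr}(ACB)$ are the two roots of a monic quadratic $t^2-Pt+Q$ whose coefficients lie in the subring generated by the remaining six traces, so that $s_{123}$ satisfies this quadratic. I would obtain $P=\operatorname{tr}(ABC)+\operatorname{tr}(ACB)$ from the symmetric identity $\operatorname{tr}(XYZ)+\operatorname{tr}(XZY)=\operatorname{tr}(X)\operatorname{tr}(YZ)+\operatorname{tr}(Y)\operatorname{tr}(ZX)+\operatorname{tr}(Z)\operatorname{tr}(XY)-\operatorname{tr}(X)\operatorname{tr}(Y)\operatorname{tr}(Z)$, and $Q=\operatorname{tr}(ABC)\operatorname{tr}(ACB)$ from the fundamental identity $\operatorname{tr}(M)\operatorname{tr}(N)=\operatorname{tr}(MN)+\operatorname{tr}(MN^{-1})$ with $M=ABC$, $N=ACB$: the summand $\operatorname{tr}(ABC(ACB)^{-1})$ collapses, after cyclic invariance, to the commutator trace $\operatorname{tr}(BCB^{-1}C^{-1})=\operatorname{tr}(B)^2+\operatorname{tr}(C)^2+\operatorname{tr}(BC)^2-\operatorname{tr}(B)\operatorname{tr}(C)\operatorname{tr}(BC)-2$, while the remaining summand $\operatorname{tr}(ABCACB)=\operatorname{tr}\big((AB)(CA)(CB)\big)$ is reduced by iterating the same two identities on the triple $(AB,CA,CB)$. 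Substituting $\operatorname{tr}=-s$ throughout and collecting terms then yields the displayed relation.

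Finally, to see that this is a genuine presentation I would again invoke Lemma \ref{lem;basis}: every monomial on the right-hand side lies in $\Lambda$, so the relation is exactly the straightening rule that rewrites the forbidden product $s_{12}s_{13}s_{23}$ in terms of the basis, and since those monomials form a basis this single rule generates the whole ideal of relations by a standard rewriting argument; the hypersurface it defines has the expected dimension $\dim{\rm SL}_2^3-\dim{\rm PSL}_2=9-3=6$, confirming irreducibility and completeness. The main obstacle is the explicit reduction of $\operatorname{tr}(ABCACB)$ — equivalently, the diagrammatic resolution of all crossings of the product $s_{12}s_{13}s_{23}$ on $\Sigma_0^{4}$, which is the route of \cite{BP00} — together with careful sign-tracking through the $\gamma\mapsto-\operatorname{tr}$ dictionary.
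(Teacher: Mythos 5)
The paper itself offers no proof of this lemma: it is quoted from \cite{BP00}, where the presentation is obtained by a direct diagrammatic computation in the skein algebra (resolving the crossings of $s_{12}s_{13}s_{23}$ for generic $q$ and then specializing). Your route through the character ring of $F_3$ and the Fricke trace identities is therefore genuinely different, and in outline it does land on the displayed relation: under $s_\bullet\mapsto-\operatorname{tr}(\bullet)$ the quadratic $z^2-Pz+Q=0$ for $z=\operatorname{tr}(ABC)$, with your $P$ and $Q$, rearranges term by term into the stated identity, and your reduction of $Q$ via $\operatorname{tr}(ABC)\operatorname{tr}(ACB)=\operatorname{tr}\bigl((AB)(CA)(CB)\bigr)+\operatorname{tr}(BCB^{-1}C^{-1})$ is the standard way to compute it. For completeness of the presentation, lean only on Lemma \ref{lem;basis}: the relation rewrites the forbidden monomial into $\Lambda$-monomials, each application strictly decreases $k_{12}+k_{13}+k_{23}$ so the rewriting terminates, the $\Lambda$-monomials therefore span the quotient of the polynomial ring by the single relation, and since they are independent in $\sS_1(\Sigma_0^{4})$ the surjection is an isomorphism. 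The closing dimension count proves neither reducedness nor that the ideal is principal, and is not needed.

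The one genuine gap is at the first step. The dictionary $\gamma\mapsto-\operatorname{tr}(\rho(\gamma))$ identifies the $q=-1$ specialization with the ${\rm SL}_2$-character ring (\cite{Bul97}, \cite{PS00}); the lemma is stated at $q=+1$, and over $\mathbb{Z}$, so you are silently using the wrong specialization, and you also need the universal (integral) character ring of $F_3$ together with its reducedness rather than just the complex points. The cleanest repair is a parity argument: any two of the seven curves have even geometric intersection number on the planar surface $\Sigma_0^{4}$ (the mod $2$ intersection form vanishes), so every product of them is a diagram with an even number of crossings, hence every coefficient of its expansion in the basis of Lemma \ref{lem;basis} lies in $\mathbb{Z}[q^{\pm 2}]$ and the specializations at $q=1$ and $q=-1$ coincide. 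With that observation your computation at $q=-1$ transfers to the stated $q=1$ relation; without it, the ``careful sign-tracking'' you defer to the end is being done against the wrong algebra.
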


\begin{figure}[h]
      \centering
      \includegraphics[scale=0.38]{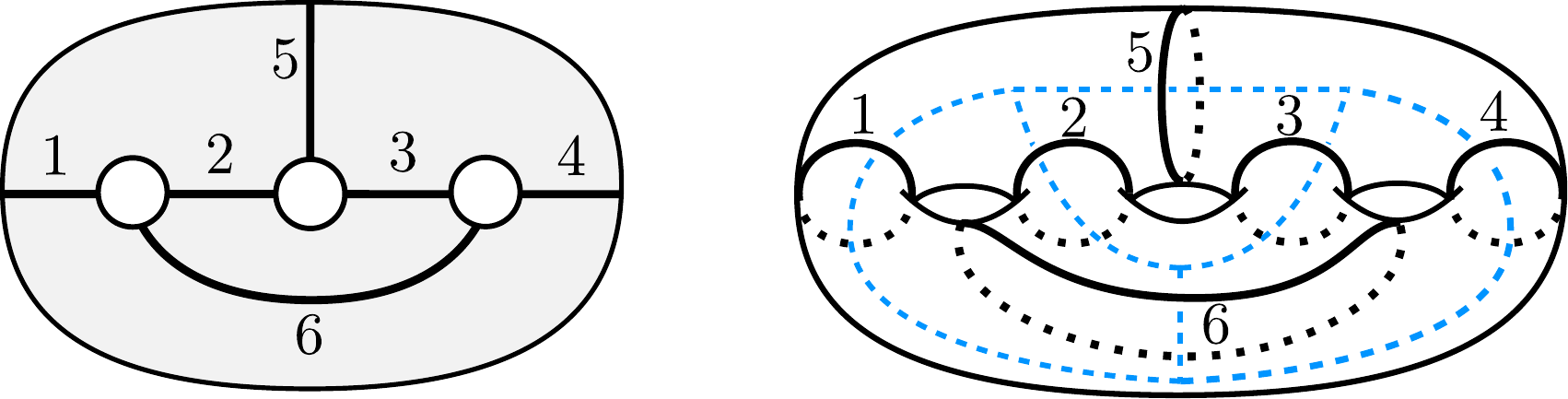}
      \caption{Left: $\Sigma_0^{4}$ with edges giving a triangulation of $\text{Int}\Sigma_0^{4}$. The edges are numbered as in the picture. Right: a pants decomposition $\{C_i\}_{i=1}^6$ for $\Sigma_3^{0}$ and a dual graph (the blue graph). Each $C_i$ corresponds with the $i$-th curve.}\label{fig;edges_pants_g3}
  \end{figure}

For an essential multicurve $\gamma$ in $\Sigma_0^4$, let $m_i(\gamma)$ denote the geometric intersection number of $\gamma$ and the $i$-th edge in Figure \ref{fig;edges_pants_g3}.
Put 
\begin{align}
\vec{m}(\gamma) := 
(m_1(\gamma),m_2(\gamma),m_3(\gamma),m_4(\gamma),m_5(\gamma),m_6(\gamma))\in \bN^6\label{vec(m)}\\
{\rm sum}(\gamma) := m_1(\gamma)+m_2(\gamma)+m_3(\gamma)+m_4(\gamma)+m_5(\gamma)+m_6(\gamma).\label{sum}
\end{align}
Then we have
\begin{align*}
&\vec{m}(s_1)
= (1,1,0,0,0,1),\ \vec{m}(s_2) = (0,1,1,0,1,0),\ 
\vec{m}(s_3)
= (0,0,1,1,0,1),\\ &\vec{m}(s_{12}) = (1,0,1,0,1,1),\ 
\vec{m}(s_{13})
= (1,1,1,1,0,0),\ \vec{m}(s_{23}) = (0,1,0,1,1,1),\\
&\vec{m}(s_{123}) = (1,0,0,1,1,0).
\end{align*}
For any $\vec{k}\in\mathbb{N}^7$, it is easy to show that 
$\text{s}(\vec{k})$ is the sum of all the entries of 
\begin{align}
k_1 \vec{m}(s_1)+k_2 \vec{m}(s_2)+k_3 \vec{m}(s_3)+k_{12} \vec{m}(s_{12})+k_{13} \vec{m}(s_{13})+k_{23} \vec{m}(s_{23})+k_{123}\vec{m}(s_{123}).\label{value}
\end{align}

\begin{lem}\label{inde}
For any distinct $\vec{u},\vec{v}\in\Lambda$, 
the values (\ref{value}) for $\vec{u}$ and $\vec{v}$ are also distinct. 
\end{lem}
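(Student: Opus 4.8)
The plan is to interpret the lemma as the injectivity, on the subset $\Lambda$, of the $\bZ$-linear map $V\colon\bZ^7\to\bZ^6$ sending $\vec k=(k_1,\dots,k_{123})$ to the vector \eqref{value}, i.e. $V(\vec k)=\sum_j k_j\,\vec m(s_j)$. Since $V$ goes from a rank-$7$ lattice to a rank-$6$ lattice, it is \emph{not} injective on all of $\bN^7$, so the defining condition $k_{12}k_{13}k_{23}=0$ of $\Lambda$ must be what rescues injectivity. The first step is to pin down $\ker V$ explicitly. Summing the listed vectors gives
\begin{align*}
\vec m(s_{12})+\vec m(s_{13})+\vec m(s_{23})=(2,2,2,2,2,2)=\vec m(s_1)+\vec m(s_2)+\vec m(s_3)+\vec m(s_{123}),
\end{align*}
which is the leading part of the syzygy appearing in Lemma \ref{relation}. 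Consequently the vector $\vec w:=(-1,-1,-1,1,1,1,-1)$, whose entries are indexed by $(k_1,k_2,k_3,k_{12},k_{13},k_{23},k_{123})$, lies in $\ker V$.

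I would then show that $\ker V=\bZ\vec w$. For this it is enough to verify that the $7\times6$ matrix with rows $\vec m(s_1),\dots,\vec m(s_{123})$ has rank $6$; concretely, deleting the row $\vec m(s_{123})$ leaves a $6\times6$ matrix of determinant $\pm8\neq0$, so the rank is $6$ and $\ker(V\otimes\bQ)$ is the line $\bQ\vec w$. Because $\vec w$ is primitive (all its entries are $\pm1$), intersecting this line with $\bZ^7$ yields $\ker V=\bZ\vec w$.

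For the final and decisive step, suppose $\vec u,\vec v\in\Lambda$ satisfy $V(\vec u)=V(\vec v)$. Then $\vec u-\vec v\in\ker V$, hence $\vec u-\vec v=t\vec w$ for some $t\in\bZ$. If $t>0$, the three ``double'' coordinates of $\vec u$ are $u_{12}=v_{12}+t$, $u_{13}=v_{13}+t$, $u_{23}=v_{23}+t$, each strictly positive since $\vec v\in\bN^7$ and $t\ge1$; thus $u_{12}u_{13}u_{23}>0$, contradicting $\vec u\in\Lambda$. By the symmetric argument $t<0$ contradicts $\vec v\in\Lambda$, so $t=0$ and $\vec u=\vec v$. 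The conceptual core is this sign observation: the unique kernel direction $\vec w$ raises all three coordinates $k_{12},k_{13},k_{23}$ at once, so two distinct points of $\Lambda$ can never differ by a nonzero multiple of it. I expect the only mechanical step to be the rank-$6$ verification, which is routine linear algebra and whose one-dimensional-kernel conclusion is what the whole argument hinges on.
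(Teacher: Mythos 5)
Your proof is correct and follows essentially the same route as the paper's: both identify the one-dimensional kernel spanned by $(1,1,1,-1,-1,-1,1)$ (via the relation $\vec m(s_{12})+\vec m(s_{13})+\vec m(s_{23})=\vec m(s_1)+\vec m(s_2)+\vec m(s_3)+\vec m(s_{123})$ plus a rank-$6$ check) and then use the fact that this kernel direction shifts $k_{12},k_{13},k_{23}$ in unison, which is incompatible with $k_{12}k_{13}k_{23}=0$ for both endpoints. Your endgame ($\vec u-\vec v=t\vec w$ with $t>0$ forcing all three of $u_{12},u_{13},u_{23}$ strictly positive) is a cleaner packaging of the paper's sign-based case analysis, and your explicit determinant $\pm 8$ checks out, but the underlying argument is the same.
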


\begin{proof}
One can show that, by regarding 
$$\vec{m}(s_1),\vec{m}(s_2),\vec{m}(s_3),\vec{m}(s_{12}),\vec{m}(s_{13}),\vec{m}(s_{23}),\vec{m}(s_{123})$$
as vectors and removing one of $\vec{m}(s_{12}),\vec{m}(s_{13}), \vec{m}(s_{23})$ from them, 
the remaining vectors are linearly independent,
and  
\begin{align}
\vec{m}(s_1)+\vec{m}(s_2)+\vec{m}(s_3)-\vec{m}(s_{12})-\vec{m}(s_{13})-\vec{m}(s_{23})+\vec{m}(s_{123}) = 0.\label{eq;linear_eq}
\end{align}
Suppose 
$$k_1 \vec{m}(s_1)+k_2 \vec{m}(s_2)+k_3 \vec{m}(s_3)+k_{12} \vec{m}(s_{12})+k_{13} \vec{m}(s_{13})+k_{23} \vec{m}(s_{23})+k_{123}\vec{m}(s_{123}) = 0$$
for $(k_1,k_2,k_3,k_{12},k_{13},k_{23},k_{123})\in\mathbb Q^{7}.$
The above discussion shows the solution space of \eqref{eq;linear_eq} is $1$-dimensional, i.e. 
$$(k_1,k_2,k_3,k_{12},k_{13},k_{23},k_{123}) = c(1,1,1,-1,-1,-1,1),$$
where $c\in\mathbb Q$.
Thus 
\begin{align}\label{eq-zero}
    k_{12},k_{13},k_{23}\leq 0\text{ or }k_{12},k_{13},k_{23}\geq 0.
\end{align}

Assume there exist distinct $\vec{u},\vec{v}\in\Lambda$ such that 
the values (\ref{value}) of $\vec{u}$ and $\vec{v}$ are the same.
Then we have 
\begin{align}\label{eq-zero-uv}
(u_1-v_1 )\vec{m}(s_1)+(u_2-v_2) \vec{m}(s_2)+(u_3-v_3) \vec{m}(s_3)+(u_{12}-v_{12})k_{12} \vec{m}(s_{12})+\nonumber\\
(u_{13}-k_{13}) \vec{m}(s_{13})+(u_{23}-v_{23})\vec{m}(s_{23})+(u_{123}-v_{123})\vec{m}(s_{123})=0
\end{align}
If $(u_{12}-v_{12})(u_{13}-v_{13})(u_{23}-v_{23})=0$, then linear independence discussed above shows $\vec{u}=\vec{v}$. This contradicts with the assumption that $\vec{u}\neq \vec{v}$.
Suppose $(u_{12}-v_{12})(u_{13}-k_{13})(u_{23}-v_{23})\leq 0$. 
Since $\vec u\in\Lambda$, i.e. $u_{12}u_{13}u_{23}=0$, 
we suppose $u_{12}=0$ without loss of generality. 
Since $u_{12}-v_{12}\neq 0$, we have $v_{12}\neq 0$ and $u_{12}-v_{12}<0$. Then $v_{13}=0$ or $v_{23}=0$ from $\vec v\in\Lambda$. We suppose $v_{13}=0$.  Since $u_{13}-v_{13}\neq 0$, we have $u_{13}\neq 0$ and $u_{13}-v_{13}>0$.
By combining \eqref{eq-zero-uv}, $u_{12}-v_{12}<0$ and $u_{13}-v_{13}>0$, the assumption contradicts with \eqref{eq-zero}.
\end{proof}

\begin{rmk}
     Lemma \ref{inde}  implies the independence of the basis elements in Lemma  \ref{lem;basis}.
\end{rmk}

\subsection{Proof of finiteness conjecture for a family of 3-manifolds}
In this subsection, the goal is to prove the following theorem. 

\begin{thm}\label{thm;example}
 Let $\gamma\subset \partial H_g$ be a simple closed curve and $H_g^\gamma$ denote the resulting 3-manifold obtained from $H_g$ by attaching a 2-handle along $\gamma$. 
If $\gamma$ satisfies one of the following conditions, the finiteness conjecture holds for $H_g^\gamma$, 
where the DT coordinates are with respect to the pants decompositions and the dual graphs defined in Figures \ref{Fig;pantsdecomp_genus2} and \ref{fig;edges_pants_g3}.\\
{\bf Case 1.} $g=2$ with one of the following;
\begin{itemize}
    \item $\DT(\gamma)=(n,n,2n,t_1,t_1,t_2)$ where $t_1t_2\geq 0$ and $n=|2t_2+t_1|$ (symmetric),
    \item $\DT(\gamma)=(n+m,n,2n,t,0,0)$ or $(n,n+m,2n,t,0,0)$ where $t\in \{n,-n\}$,
    \item $\DT(\gamma)=(n_1,n_2,2, \pm1, \pm 1, 0)$, 
\end{itemize}
where $n\in \bZ_{\geq 1},\ m,n_1,n_2\in \bN$.\\
{\bf Case 2.} $g = 3$ with one of the following; 
\begin{itemize}
    \item $\DT(\gamma)=(1,n,n+m,m+1,n+1,m,0,0,0,1,1,0)$ or $(1,n,n+m,m+1,n+1,m,0,0,0,-1,-1,0)$,
    \item $\DT(\gamma)=(n+m,m,0,n,n,m+2n,t,0,0,0,0,0)$,
\end{itemize}
where $n,m\in\bZ_{\geq 1}$.
In particular, $\rankp(H_2^\gamma)=1$ in the second case of Case 1 and $\rankp(H_3^\gamma)=1$ for both two cases in Case 2. 
\end{thm}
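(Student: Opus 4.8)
The plan is to verify Conjecture \ref{str} for each attaching system $(H_g,\{\gamma\})$ in the list; by Theorem \ref{equal}(1) this immediately gives the finiteness conjecture for $H_g^\gamma$, and the accompanying bound $\rankp\sSq(H_g^\gamma)\le|X|$ yields the asserted equalities once an $X$ with $|X|=1$ is exhibited. Since $\iota_\ast\colon\sSq(H_g)\to\sSq(H_g^\gamma)$ is surjective (as used in the proof of Theorem \ref{equal}), it suffices to produce a finite $X\subset\sSq(H_g)$ such that every $\iota_\ast(L)$, with $L$ a skein in $H_g$, lies in $\iota_\ast(\pHa\cdot X)$. Before doing this I would invoke Proposition \ref{prop2.4} to replace $\gamma$ by a convenient representative of its orbit under the handlebody group of self-diffeomorphisms of $H_g$: I expect each DT-coordinate family to be tailored so that a product of twists about curves bounding disks in $H_g$ carries $\gamma$ to a standard curve for which the handle attachment is computable. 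In particular, the twist parameters $t_1,t_2,t$ should be absorbed by such diffeomorphisms, and the precise numerical constraints (e.g.\ $n=|2t_2+t_1|$, $t\in\{n,-n\}$) should be exactly the conditions making the normalized curve realizable.

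The core mechanism I would set up is a degree-reduction argument. Using $\sSq(H_2)\cong\cR[x,y,z]$ (Lemma \ref{lem-skeinalgebra}) for $g=2$ and the free basis $\{s^{\vec k}\mid\vec k\in\Lambda\}$ of $\sSq(H_3)=\sSq(\Sigma_0^4)$ (Lemma \ref{lem;basis}) for $g=3$, I would filter $\sSq(H_g)$ by the relevant weighted degree: the $(x,z;y)$-degree in Case~1 and $\mathrm{s}(\vec k)$ or $\mathrm{s}'(\vec k)$ in Case~2. Lemma \ref{inde} is what makes the filtration usable, since it guarantees that distinct basis monomials carry distinct total intersection numbers with the edges of Figure \ref{fig;edges_pants_g3}, so that the leading term of a product is unambiguous. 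I would then isolate two sources of relations. First, those boundary curves of $\partial H_g$ that are disjoint from $\gamma$ and isotopic in $\partial H_g$ to algebra generators of $\sSq(H_g)$ lie in $\pHa$ and, acting by stacking at the top level, realize multiplication by those generators; this lets the $\pHa$-action absorb all ``directions parallel to $\gamma$''. Second, attaching the $2$-handle along $\gamma$ produces a handle-slide relation in $\sSq(H_g^\gamma)$: a skein meeting the attaching region can be pushed across the cocore disk and rewritten in terms of skeins meeting it fewer times plus boundary contributions. Combining the two, I would show that any monomial of sufficiently high degree in the ``transverse'' generator is congruent, modulo $\iota_\ast(\pHa\cdot(\text{lower-degree monomials}))$, to lower-degree terms, so the reduction terminates on the filtration and finitely many low-degree monomials span. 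In the $\rankp=1$ cases the claim is the extreme one that $X=\{\emptyset\}$ works, i.e.\ $\sSq(\partial H_g^\gamma)$ surjects onto $\sSq(H_g^\gamma)$; there every interior generator must be slid entirely onto the boundary.

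The hardest step will be making the handle-slide relation explicit for each family and checking that it is genuinely degree-decreasing with a unit leading coefficient over $\bQ(q)$. Concretely, I would compute the class of $\gamma$ (and of the skeins obstructing finite generation) in terms of the algebra generators, using the defining relation of $\sS_1(\Sigma_0^4)$ in Lemma \ref{relation} together with the intersection vectors $\vec m(s_\bullet)$ to track degrees, and then confirm that the slide rewrites the top monomial with an invertible coefficient so that the induction on the filtration closes. The bookkeeping is delicate because the skein-algebra product is only filtered-commutative (commutative at $q=1$), so I would need to control the lower-order $q$-corrections; Lemma \ref{inde} and the explicit $q=1$ relation are precisely the inputs that keep those corrections in strictly lower degree. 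I anticipate that the low-intersection boundary cases, such as $\DT(\gamma)=(n_1,n_2,2,\pm1,\pm1,0)$, will resist the generic leading-term argument and instead require a separate, more hands-on diagrammatic reduction.
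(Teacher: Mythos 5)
Your frame is right---reduce to Conjecture \ref{str} via Theorem \ref{equal}(1), filter $\sSq(H_2)\cong\cR[x,y,z]$ and $\sSq(H_3)=\sSq(\Sigma_0^4)$ by weighted degree, realize algebra generators by boundary curves disjoint from $\gamma$, and close the induction by checking that the matrix of leading coefficients is the identity at $q=1$, hence invertible over $\bQ(q)$. But the engine you propose for the ``transverse'' direction is not the one that works, and it is exactly where your argument has a gap. You plan to invoke a handle-slide relation in $\sSq(H_g^\gamma)$ to push skeins across the cocore disk and rewrite them in lower degree; you defer the verification that this rewriting is degree-decreasing with unit leading coefficient, and for attaching curves that do not bound disks in $H_g$ that verification is essentially the whole difficulty of the finiteness conjecture. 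The paper never uses handle sliding in this proof. It proves the strictly stronger statement $\sSq(\partial H_g)_\gamma\cdot X=\sSq(H_g)$, entirely inside $\sSq(H_g)$, before applying $\iota_\ast$. The key observation you are missing is that $\gamma$ itself is disjoint from $\gamma$, hence $\gamma\in\sSq(\partial H_g)_\gamma$, and in the first case of Case 1 the elements $\gamma^{m}\alpha_1^{k_1}\alpha_2^{k_2}\cdot y^{k_3-mn}$ (with $X=\{1,y,\dots,y^{n-1}\}$) supply the high powers of $y$ that no curve disjoint from $\gamma$ realizes directly; the resulting block matrix $\bigl(\begin{smallmatrix} I & O\\ \ast & T\end{smallmatrix}\bigr)$ is what makes the induction close. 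In the other cases the numerical constraints on $\DT(\gamma)$ are precisely what guarantee that enough curves $\alpha_\bullet$ (or $\beta_\bullet$) disjoint from $\gamma$ exist whose leading terms are the generators $x,y,z$ or $s_1,\dots,s_{123}$, with the triangularity controlled by Lemmas \ref{keylem}, \ref{lem;prod_sum_sss}, \ref{keylem-s} and the injectivity statement of Lemma \ref{inde}.

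Two smaller points. First, your plan to normalize $\gamma$ by Proposition \ref{prop2.4} using twists about disk-bounding curves is not how the hypotheses function: the twist parameters are not absorbed by handlebody diffeomorphisms in the proof, and if $\gamma$ could be carried to a disk-bounding curve you would be in the (already settled) situation of Proposition \ref{disk}. Second, your guess that the case $\DT(\gamma)=(n_1,n_2,2,\pm1,\pm1,0)$ resists the leading-term method is backwards---it is handled by the same method with $X=\{x^{k_1}z^{k_2}\mid k_i\le n_i-1\}$ and the curve $\alpha_3=qy+q^{-1}xz$; it is the genus-$3$ cases that require the heavier combinatorial lemmas.
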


For convenience, we will use two different framings for curves on $\partial H_g$ in the proof.
The framings of all gluing  curves (drawn in red) on the boundary in Theorem \ref{thm;example}
are the usual blackboard framings. 
On the other hand, other boundary curves in Theorem \ref{thm;example}
are equipped with the blackboard framings with respect to the projection $H_g=\Sigma_{0}^{g+1}\times [0,1]\to \Sigma_{0}^{g+1}$.

\begin{proof}
{\bf The first case in Case 1.} We can assume both $t_1$ and $t_2$ are non-negative integers since the proof in the case that they are both non-positive integers is similar.
The red curve in the left of Figure \ref{fg1} is the gluing curve $\gamma$. 
{Note that the green curve $\alpha_1$ and the blue curve $\alpha_2$ respectively correspond to $x$ and $z$ in $\sSq(\Sigma_{0}^{3})(=\sSq(H_2))$ depicted in the left of Figure \ref{fg1}. 
Since the green curve and the blue curve do not intersect with the red curve, $x,z\in \sSq(\partial H_g)_\gamma$.}
\begin{figure}[h]
      \centering
      \includegraphics[scale=0.45]{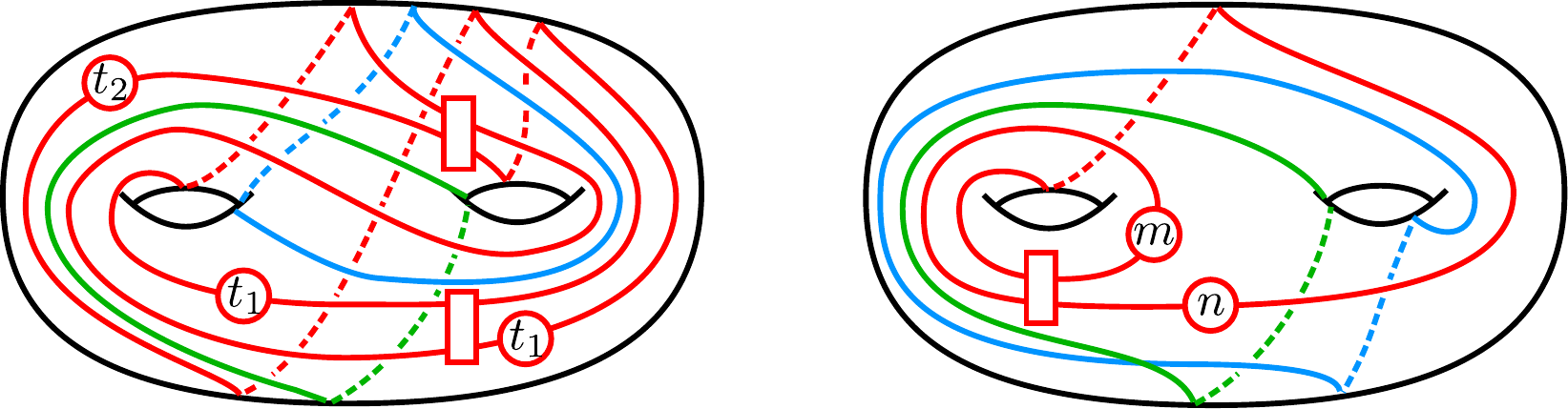}
      \caption{Each circle with non-negative integer $k$ represents $k$ parallel copies of the corresponding arcs. Each coupon in the picture represents certain number of horizontal lines connecting the endpoints on the left side of the coupon and those on the right side of the coupon. \\
      Left: the red curve is the gluing curve $\gamma$ whose DT coordinate is $(n,n,2n,t_1,t_1,t_2)$. Right: the red curve is the gluing curve $\gamma$ whose DT coordinate is $(n+m,n,2n,n,0,0).$}\label{fg1}
  \end{figure}
Obviously we have $\gamma \in \sSq(\partial H_2)_{\gamma}$. 
Set $X = \{1,y,\cdots,y^{n-1}\}\subset \sSq(H_2)$.
Then it suffices to show  $\sSq(\partial H_2)_{\gamma}\cdot X
= {\sSq(\Sigma_0^3)(=\sSq(H_2))}$ as $\mathbb{Q}(q)$-vector spaces. 
As we mentioned, $\alpha_1=x,\ \alpha_2=z,\ y\in \sSq(\partial H_2)_{\gamma}\cdot X$.
Fix $2\leq k\leq 2n-1$ and assume that $x^{k_1} z^{k_2} y^{k_3}\in \sSq(\partial H_2)_{\gamma}\cdot X$ for any solutions of $k_1+k_2+2k_3<k$ $(k_1,k_2,k_3\in \mathbb{N})$. For $k_1,k_2,k_3\in \mathbb{N}$ satisfying $k=k_1+k_2+2k_3$, consider $\alpha_1^{k_1}\alpha_2^{k_2}\cdot y^{k_3}$. 
From the geometric intersection numbers in the DT coordinates of $\alpha_1, \alpha_2$ and $y$, 
\begin{align}
\alpha_1^{k_1}\alpha_2^{k_2}\cdot y^{k_3}
=\sum_{k=i+j+2l}c(k_1,k_2,k_3;i,j,l)x^i z^j y^l+\text{(lower $(x,z;y)$-degree terms)}, \label{eq;xzy00}
\end{align}
where $c(k_1,k_2,k_3;i,j,l)\in \bZ[q^{\pm1}]\subset \bQ(q)$. 
When we substitute $1$ to $q$, $\sS_1(H_2)$ is a commutative algebra and we have $\alpha_1^{k_1}\alpha_2^{k_2}\cdot y^{k_3}=x^{k_1} z^{k_2} y^{k_3}\in \sS_1(H_2)$. With the lexicographic order on $(k_1,k_2,k_3)$, 
the coefficient matrix $(c(k_1,k_2,k_3;i,j,l))$ obtained from the highest terms in (\ref{eq;xzy00}) is full rank since it will be the identity matrix by substituting $1$ to $q$, 
where $(c(k_1,k_2,k_3;i,j,l))$ is a square matrix whose rows and columns run over all solutions of $k=k_1+k_2+2k_3$ and $k=i+j+2l$ respectively. 
This implies that $x^{k_1} z^{k_2} y^{k_3}\in \sSq(\partial H_2)_{\gamma}\cdot X.$

Fix $k\geq 2n$. Assume that $x^{k_1} z^{k_2} y^{k_3}\in \sSq(\partial H_2)_{\gamma}\cdot X$ for any $k_1,k_2,k_3\in \bN$ satisfying $k_1+k_2+2k_3<k$.  Consider $\alpha_1^{k_1}\alpha_2^{k_2}\cdot y^{k_3}$ for $k_1,k_2,k_3\in \bN$ satisfying $k=k_1+k_2+2k_3$ and $k_3< n$. 
Also consider $\gamma^{m}\alpha_1^{k_1}\alpha_2^{k_2}\cdot y^{k_3-mn}$ for $k_1,k_2,k_3,m\in \bN$ satisfying $k=k_1+k_2+2k_3$, $k_3\geq n$ and $0\leq k_3-mn<n$. 
When we substitute $1$ to $q$ then we take the absolute value of coefficients,
we have the (absolute valued) coefficient matrix
$\begin{pmatrix}
I &O\\
\ast &T
\end{pmatrix}$, where $I$ is the identity matrix and $T$ is obtained from a triangular matrix whose diagonal entries are $1$ by interchanging some rows. Since the matrix is full rank, $x^{k_1} z^{k_2} y^{k_3}\in \sSq(\partial H_2)_{\gamma}\cdot X.$

\noindent{\bf The second case in Case 1.} The red curve in the right of Figure \ref{fg1} is  the gluing curve with the DT coordinate $(n+m,n,2n,n,0,0)$. We only prove this case since a similar discussion works for other parallel cases. The green curve $\alpha$ in the right of Figure \ref{fg1} shows $x\in\sSq(\partial H_2)_{\gamma}$.
The blue curve $\beta$ in the right of Figure \ref{fg1} shows $y\in\sSq(\partial H_2)_{\gamma}$. Obviously we also have $z\in\sSq(\partial H_2)_{\gamma}$. We set $X = \{\emptyset\}$. 
For any $k_1,k_2,k_3\in \bN$, we have $$z^{k_2}\beta^{k_3}\alpha^{k_1}\cdot \emptyset = x^{k_1}z^{k_2}y^{k_3}\in \sSq(\partial H_2)_{\gamma}\cdot X.$$
In particular, $\rankp(H_2^\gamma)=1$ in this case.

\begin{figure}[h]
      \centering
      \includegraphics[scale=0.45]{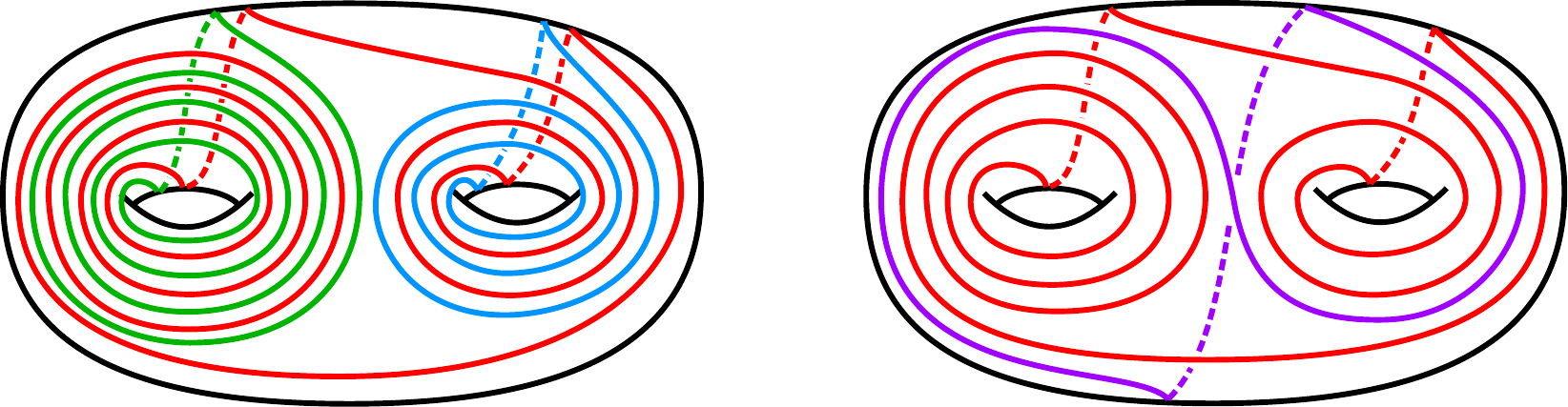}
      \caption{The red curve is the gluing curve $\gamma$ with $\DT(\gamma)=(4,3,2,1,1,0)$.}\label{fg3}
  \end{figure}
\noindent{\bf The third case in Case 1.} We only prove $\DT(\gamma)=(n_1,n_2,2, 1, 1, 0)$ since a similar argument works for other parallel cases. 
For readers convenience, the red curve in Figure \ref{fg3} is the gluing curve with $n_1=4,n_2=3$. 
If $n_1 = 0$ or $n_2 =0$, it is covered by Corollary \ref{cor2.6}. Suppose $n_1 n_2\neq 0$. 
Let $\alpha_1, \alpha_2$ and $\alpha_3$ denote simple closed curves on $\partial H_2$ whose DT coordinates are $(n_1,0,0, 1, 0, 0),\ (0,n_2,0, 0, 1, 0)$, and $(1,1,2, 0, 0, 1)$ respectively. 
Note that $\alpha_i$ does not intersect with $\gamma$, i.e. $\alpha_i\in\sSq(\partial H_2)_{\gamma}$ for $i=1,2,3$. 
In $\sSq(H_2)$, we have 
$$
\alpha_1 = u_{n_1} x^{n_1}
+ \cdots +u_1 x+u_0,\quad 
\alpha_2 = v_{n_2} z^{n_2}
+ \cdots +v_1 z+v_0,\quad 
\alpha_3 = qy+q^{-1}xz,
$$ where $u_{n_1}$ and $v_{n_2}$ are nonzero elements in $\bQ(q)$.
Set 
\begin{align}
X = \{x^{k_1}z^{k_2}\mid k_i\in\mathbb{N},\ k_i\leq n_i-1 \text{ for }i=1,2\}. \label{def;X03}
\end{align}
To show 
\begin{align}
\{x^{k_1}z^{k_2}\mid k_1,k_2\in\mathbb{N}\}\subset \sSq(\partial H_2)_{\gamma}\cdot X,\label{eq;sub03}
\end{align}
assume that, for $k\geq1$, $x^{k_1}z^{k_2}\in \sSq(\partial H_2)_{\gamma}\cdot X$ for any $k_1,k_2\in \bN$ with $k_1+k_2<k$.
For $k_1,k_2\in \bN$ with $k_1+k_2=k$, 
consider $\alpha_1^{m_1}\alpha_2^{m_2}x^{k_1-m_1n_1}z^{k_2-m_2 n_2}$, where $m_i\in \bN$ satisfies $0\leq k_i-m_in_i<n_i\ (i=1,2)$. 
Then, we have
$$\alpha_1^{m_1}\alpha_2^{m_2}\cdot x^{k_1-m_1n_1}z^{k_2-m_2 n_2}=q^{c}x^{k_1}z^{k_2}+(\text{lower $(x,z;y)$-degree terms}), $$
where $c\in \bZ$ and $y$ does not appear on the right-hand side.  
Hence, $x^{k_1}z^{k_2}\in \sSq(\partial H_2)_{\gamma}\cdot X$, i.e. (\ref{eq;sub03}) holds. 

Fix $k\geq1$ and suppose that $x^{k_1}z^{k_2}y^{k_3}\in \sSq(\partial H_2)_{\gamma}\cdot X$ for any $k_1,k_2\in \bN,\ k_3<k$. 
Then, we have
$$\alpha_3^k \cdot x^{k_1}z^{k_2}=q^{c}x^{k_1}z^{k_2}y^{k}+\text{(lower $y$-degree terms)}, $$
where $c\in \bZ$. 
This implies that $x^{k_1}z^{k_2}y^{k}\in \sSq(\partial H_2)_{\gamma}\cdot X.$

    
\noindent{\bf The first case in Case $2$.} 
We only prove the case when $\DT(\gamma)=(1,n,n+m,m+1,n+1,m,0,0,0,1,1,0).$
We will use the elements $\alpha_1,\alpha_2,\alpha_3,\alpha_{12},\alpha_{13},\alpha_{23},\alpha_{123}\in \sSq(\partial H_3)_{\gamma}$ which are shown in Figures \ref{alpha1},\ref{alpha2},\ref{alpha3}.

We set $X = \{\emptyset\}$. For any $k_1,k_2,k_3\in\mathbb{N}$, we have 
$\alpha_1^{k_1}\alpha_2^{k_2}\alpha_3^{k_3}\cdot \emptyset = s_1^{k_1}s_2^{k_2}s_3^{k_3}\in \sSq(\partial H_3)_{\gamma}\cdot X.$
\begin{figure}[h]
      \centering
      \includegraphics[scale=0.45]{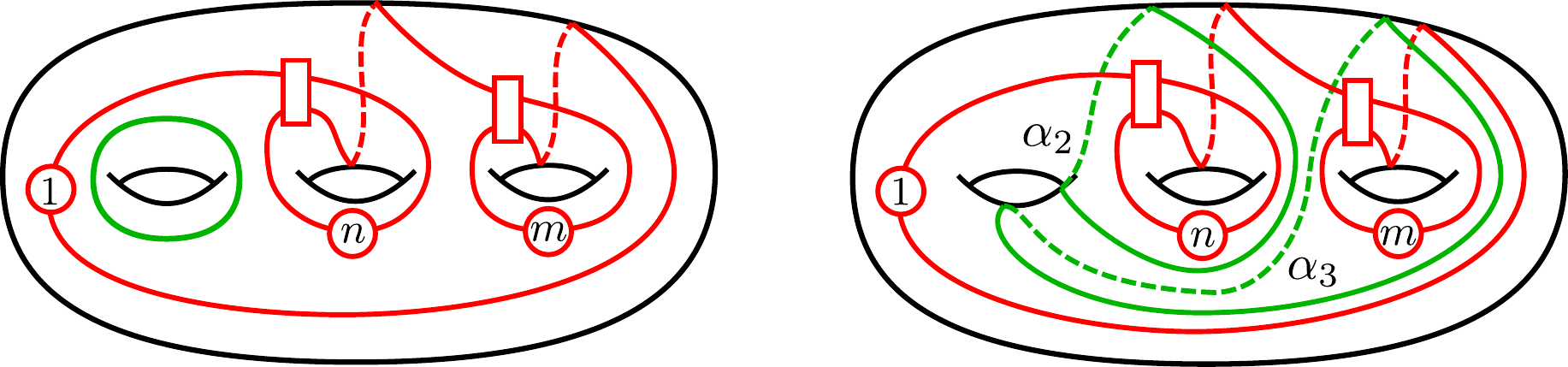}
      \caption{The red curve is the gluing curve $\gamma$. The  green curve in the left picture is $\alpha_1$ and the green curves in the right picture are $\alpha_{2}$ and $\alpha_{3}$.}\label{alpha1}
  \end{figure}

\begin{figure}[h]
      \centering
      \includegraphics[scale=0.45]{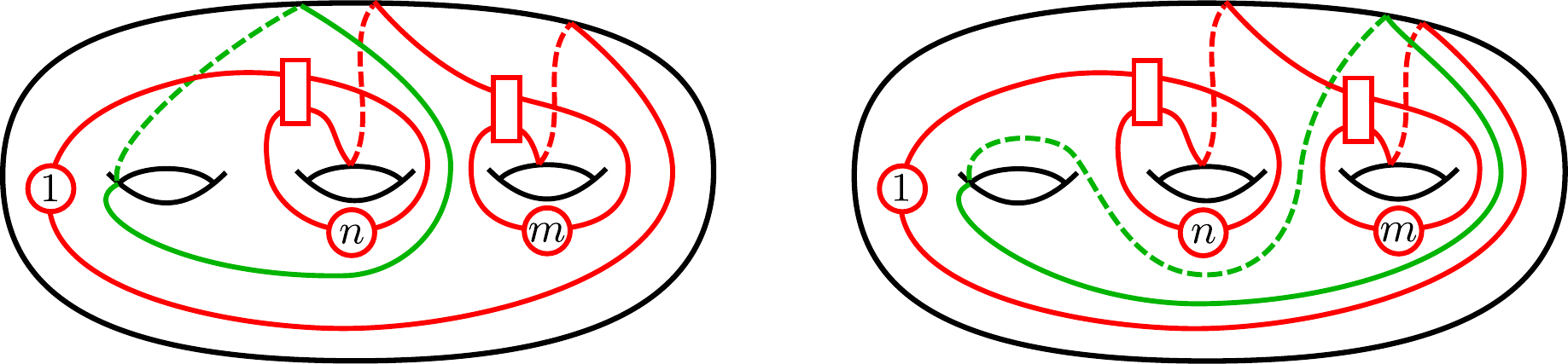}
      \caption{The  green curve in the left (resp. right) picture is $\alpha_{12}$ (resp. $\alpha_{23}$).}\label{alpha2}
  \end{figure}

  \begin{figure}[h]
      \centering
      \includegraphics[scale=0.45]{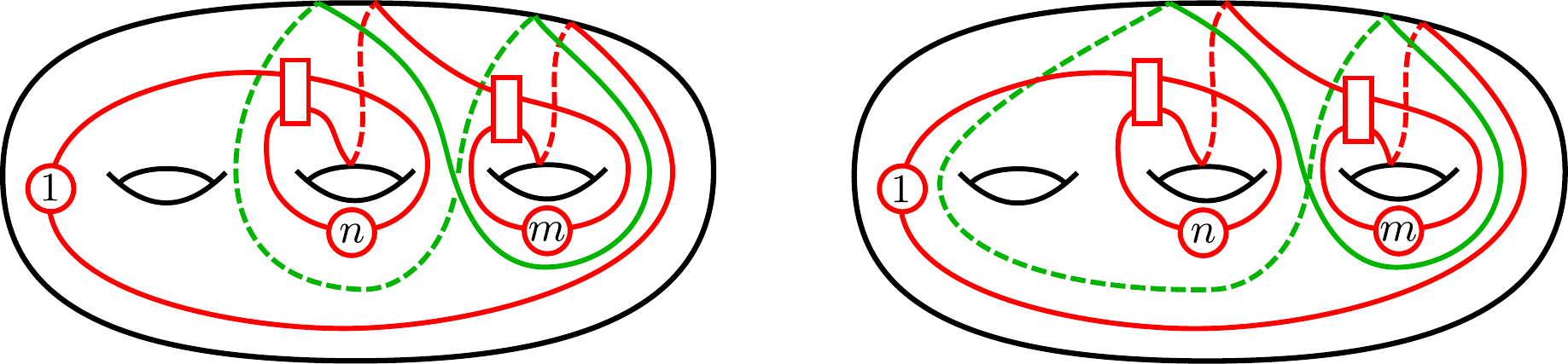}
      \caption{The  green curve in the left (resp. right) picture is $\alpha_{23}$ (resp. $\alpha_{123}$).}\label{alpha3}
  \end{figure}

For any $\vec{k}\in\Lambda$, we define 
$$\deg(s^{\vec{k}}) := \text{s}'(\vec{k}),$$ 
where ${\rm s}'(\vec{k})$ was defined as (\ref{s(vec)}).
We will use mathematical induction on $\deg(s^{\vec{k}})$ to show $s^{\vec{k}}\in \sSq(\partial H_3)_{\gamma}\cdot X$ for any $\vec{k}\in\Lambda$.

We will use Lemmas \ref{keylem}
 and \ref{lem;prod_sum_sss} in the rest of the proof, and will prove them in Section \ref{sec;pf_lem}. 
\begin{lem}\label{keylem}
For any $\vec{k}=(k_1,k_2,k_3,k_{12},k_{13},k_{23},k_{123})\in \mathbb{N}^7$, suppose there exists a finite subset $\Lambda_0$ of $\Lambda$ 
such that 
\begin{equation}\label{eqkey}
\alpha_{123}^{k_{123}}\alpha_{12}^{k_{12}}
\alpha_{13}^{k_{13}}\alpha_{23}^{k_{23}}\cdot(s_1^{k_1}s_2^{k_2}s_3^{k_3}) = 
\sum_{\vec{u}\in\Lambda_0} C_{\vec{u}}s^{\vec{u}},
\end{equation}
where $0\neq C_{\vec{u}}\in \bZ[q^{\pm1}]$ for any $\vec{u}\in\Lambda_0$.
Then, we have  $\deg(s^{\vec{u}})\leq \text{s}'(\vec{k})$  
for any $\vec{u}\in\Lambda_0$. 
\end{lem}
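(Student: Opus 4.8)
The plan is to reinterpret the functional $\text{s}'$ as a geometric intersection count with four of the six edges of Figure \ref{fig;edges_pants_g3}, and then exploit the fact that resolving crossings via relation $({\rm A})$ cannot increase such intersection numbers. Writing $e_j$ for the $j$-th edge in Figure \ref{fig;edges_pants_g3}, one checks directly from the listed vectors $\vec{m}(s_X)$ that for each generator $X\in\{1,2,3,12,13,23,123\}$ the sum $\sum_{j\in\{1,2,4,5\}} m_j(s_X)$ equals the coefficient of $k_X$ in $\text{s}'(\vec{k})$; explicitly these sums are $2,2,1,2,3,3,3$ for $s_1,s_2,s_3,s_{12},s_{13},s_{23},s_{123}$. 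Hence, just as $\text{s}(\vec{k})$ is the sum of all entries of \eqref{value}, the functional $\text{s}'(\vec{k})$ is the sum of the entries of \eqref{value} indexed by $\{1,2,4,5\}$. In particular, for any $\vec{u}\in\Lambda$ one has $\text{s}'(\vec{u})=\sum_{j\in\{1,2,4,5\}} m_j(s^{\vec{u}})$, using $\vec{m}(s^{\vec{u}})=\sum_X u_X\vec{m}(s_X)$ for $\vec{u}\in\Lambda$, i.e.\ the additivity of intersection numbers for the disjoint multicurve $s^{\vec{u}}$ that underlies Lemmas \ref{lem;basis} and \ref{inde}.

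Next I would set up the geometric picture for the left-hand side of \eqref{eqkey}. Since each $\alpha_X$ projects to $s_X$ on $\Sigma_0^4$ and the action $\cdot$ stacks curves in the $[0,1]$-direction, the element $\alpha_{123}^{k_{123}}\alpha_{12}^{k_{12}}\alpha_{13}^{k_{13}}\alpha_{23}^{k_{23}}\cdot(s_1^{k_1}s_2^{k_2}s_3^{k_3})\in\sSq(\Sigma_0^4)$ is represented by the diagram $D$ on $\Sigma_0^4$ obtained by superimposing $k_X$ parallel copies of each $s_X$, with all the $s_X$ chosen in minimal position with every edge. The over/under data and the order of the factors affect only the crossings of $D$, not its projection, so $\#(D\cap e_j)=\sum_X k_X\, m_j(s_X)$ for every $j$. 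Resolving all crossings of $D$ by $({\rm A})$ and discarding null-homotopic circles (which contribute only scalars $-q^2-q^{-2}$) expresses this element as $\sum_{\vec{u}\in\Lambda_0}C_{\vec{u}}\,s^{\vec{u}}$, where each surviving $s^{\vec{u}}$ is represented, up to a power of $q$, by an essential multicurve $\mu_{\vec{u}}$ obtained from $D$ by smoothing.

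I would then run the estimate. Because the crossings of $D$ lie in small balls disjoint from every edge, each smoothing leaves the number of intersection points with $e_j$ unchanged, so the resulting crossingless diagram meets $e_j$ in $\#(D\cap e_j)$ points; passing to minimal position can only decrease this, whence $m_j(\mu_{\vec{u}})\le\#(D\cap e_j)$ for each $j$. Summing over $j\in\{1,2,4,5\}$ and invoking the two identities from the first step, for every $\vec{u}\in\Lambda_0$,
\[
\deg(s^{\vec{u}})=\text{s}'(\vec{u})=\sum_{j\in\{1,2,4,5\}}m_j(\mu_{\vec{u}})\le\sum_{j\in\{1,2,4,5\}}\#(D\cap e_j)=\text{s}'(\vec{k}),
\]
where the first equality is the identity $\text{s}'(\vec u)=\sum_{j\in\{1,2,4,5\}}m_j(s^{\vec u})$ and the last is $\sum_{j}\sum_X k_X m_j(s_X)=\text{s}'(\vec{k})$. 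This is the asserted inequality.

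The point requiring the most care is the passage between the algebraic basis $\{s^{\vec{u}}\mid\vec{u}\in\Lambda\}$ and the geometric multicurves: one must know that each $s^{\vec{u}}$ with $\vec{u}\in\Lambda$ is, up to a power of $q$, a single essential multicurve whose edge-intersection vector equals the additive expression $\sum_X u_X\vec{m}(s_X)$, so that $\text{s}'(\vec{u})$ really computes $\sum_{j\in\{1,2,4,5\}}m_j(\mu_{\vec{u}})$. This is precisely the structure of curves on the four-holed sphere behind \cite{BP00} and Lemma \ref{inde}, and the rest relies only on the standard fact that smoothing a crossing does not increase intersections with a curve missing that crossing. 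As an independent sanity check that the bound is tight exactly where expected, one can read off from Lemma \ref{relation} that the reduction $s_{12}s_{13}s_{23}\mapsto(\text{RHS})$ is $\text{s}'$-nonincreasing: the left side has $\text{s}'$-value $8$, while every monomial on the right has $\text{s}'$-value at most $8$.
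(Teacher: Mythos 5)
Your reduction of $\text{s}'$ to the intersection count with edges $1,2,4,5$ is correct, and it is in fact the same device the paper uses (its proof compares $\deg(s^{\vec u})$ with $m_1+m_2+m_4+m_5$ of a distinguished multicurve). The genuine gap is in the step where you pass from multicurves back to the basis: you assert that each $s^{\vec u}$ with $\vec u\in\Lambda$ is, up to a power of $q$, a single essential multicurve whose edge-intersection vector is $\sum_X u_X\vec m(s_X)$. That is false whenever two of $u_{12},u_{13},u_{23}$ are positive: the curves $s_{12},s_{13},s_{23}$ pairwise intersect on $\Sigma_0^4$, so such a monomial is a diagram with crossings, and by \cite[Theorem 3.2]{AF17} it equals the distinguished multicurve $\gamma_{\vec u}$ \emph{plus} multicurves of strictly smaller total edge-intersection (this is also why the paper needs Lemma \ref{inde} to deduce independence of the basis rather than it being immediate). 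Consequently your chain $\deg(s^{\vec u})=\sum_{j}m_j(\mu_{\vec u})\le\sum_j\#(D\cap e_j)$ does not follow from ``smoothing does not increase intersection numbers'' alone: a monomial $s^{\vec u_0}$ with $\deg(s^{\vec u_0})>\text{s}'(\vec k)$ could a priori occur in the expansion with its leading multicurve $\gamma_{\vec u_0}$ cancelled against the lower-order multicurves contributed by other monomials $s^{\vec u}$. Ruling out this cancellation is the actual content of the paper's proof: it chooses $\vec u_0$ maximizing first $\deg$ and then $\text{s}$, and uses the triangularity of the monomial-to-multicurve change of basis together with Lemma \ref{inde} to show that $\gamma_{\vec u_0}$ survives in the multicurve expansion of $\sum_{\vec u}C_{\vec u}s^{\vec u}$, contradicting the intersection bound on the left-hand side. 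Your proposal omits this step entirely, and it is the heart of the lemma.

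A smaller inaccuracy: $\alpha_{123}$ and $\alpha_{23}$ do not project to $s_{123}$ and $s_{23}$; by the paper's own expansion \eqref{prod;expand} one has $\alpha_{123}=s_{123}+s_{12}s_3$ and $\alpha_{23}=s_{23}+s_2s_3$ at $q=1$, so their projections are diagrams with a crossing. This happens not to break your count, since both resolutions of each contribute $3$ to $\sum_{j\in\{1,2,4,5\}}m_j$, but the upper bound $\sum_j\#(D\cap e_j)=\text{s}'(\vec k)$ must be checked against the actual curves in Figures \ref{alpha1}--\ref{alpha3} rather than against $\vec m(s_{123})$ and $\vec m(s_{23})$.
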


\begin{lem}\label{lem;prod_sum_sss}
    Suppose $q=1$. For any positive integers $k_{12},k_{13},k_{23}$, we have 
    $$s_{12}^{k_{12}} s_{13}^{k_{13}} s_{23}^{k_{23}} =\sum_{\vec{u}\in\Lambda_0} C_{\vec{u}}s^{\vec{u}} + \text{{\rm (terms with degrees less than $2k_{12} + 3(k_{13}+k_{23})$)}},$$
    where $\Lambda_0$ is a finite subset of $\Lambda$ such that, for any $\vec{u}=(u_1,u_2,u_3,u_{12},u_{13},u_{23},u_{123})\in\Lambda_0$, $\deg(\vec{u}) = 2k_{12} + 3(k_{13}+k_{23})$ and $u_{13}u_{23}<k_{13}k_{23}$. 
\end{lem}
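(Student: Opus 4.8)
The plan is to work throughout in the commutative ring $\sS_1(\Sigma_0^4)$ and to exploit the single defining relation of Lemma~\ref{relation} together with the weight $\mathrm{s}'$. The first step is to verify that $\mathrm{s}'$ gives a \emph{filtration} compatible with the relation. Recording the weights $\deg(s_1)=\deg(s_2)=\deg(s_{12})=2$, $\deg(s_3)=1$, $\deg(s_{13})=\deg(s_{23})=\deg(s_{123})=3$ (so that $\deg(s^{\vec u})=\mathrm{s}'(\vec u)$), the left-hand side $s_{12}s_{13}s_{23}$ has weight $8$, and a direct inspection of the right-hand side shows every monomial there has weight at most $8$, with exactly three attaining $8$. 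Hence rewriting a non-basis monomial by the relation never raises the weight, the ``top-weight part'' of an element is well defined, and
\begin{equation*}
s_{12}s_{13}s_{23}=s_2 s_{13} s_{123}+s_1 s_{23} s_{123}+s_1 s_2 s_3 s_{123}+(\text{terms of weight}<8).
\end{equation*}

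Next I would prove a statement slightly more general than the lemma. \emph{Claim:} for every $\vec w\in\mathbb N^7$ with $w_{12}w_{13}w_{23}\neq0$, the basis expansion of $s^{\vec w}$ has the form $\sum_{\vec u\in\Lambda_0}C_{\vec u}s^{\vec u}+(\text{weight}<\mathrm{s}'(\vec w))$, with $\Lambda_0\subset\Lambda$ finite, each $\vec u\in\Lambda_0$ of weight $\mathrm{s}'(\vec w)$, and each such $\vec u$ satisfying $u_{13}u_{23}<w_{13}w_{23}$. The induction is on the integer $N(\vec w):=w_{13}w_{23}$. To reduce $s^{\vec w}$, factor out one copy of $s_{12}s_{13}s_{23}$ (the prefactor is a monomial of weight $\mathrm{s}'(\vec w)-8$) and substitute the relation; by the previous paragraph the top-weight contribution comes only from the three displayed weight-$8$ monomials, producing monomials $s^{\vec w^{(1)}}, s^{\vec w^{(2)}}, s^{\vec w^{(3)}}$ whose $(s_{13},s_{23})$-exponents equal $(w_{13},w_{23}-1)$, $(w_{13}-1,w_{23})$, $(w_{13}-1,w_{23}-1)$, while everything else has weight $<\mathrm{s}'(\vec w)$ and therefore, by the filtration property, expands into basis terms of weight $<\mathrm{s}'(\vec w)$. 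Since $w_{13},w_{23}\ge1$, each product satisfies $w^{(i)}_{13}w^{(i)}_{23}<w_{13}w_{23}$. If $\vec w^{(i)}\in\Lambda$ then $s^{\vec w^{(i)}}$ is a basis element with $u_{13}u_{23}=w^{(i)}_{13}w^{(i)}_{23}<w_{13}w_{23}$; if $\vec w^{(i)}\notin\Lambda$ then $N(\vec w^{(i)})<N(\vec w)$, so the induction hypothesis yields a top-weight part made of basis elements with $u_{13}u_{23}<w^{(i)}_{13}w^{(i)}_{23}<w_{13}w_{23}$. Assembling these top-weight parts and discarding the weight-dropping remainder proves the claim.

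Finally I would specialize to $\vec w=(0,0,0,k_{12},k_{13},k_{23},0)$, where $s^{\vec w}=s_{12}^{k_{12}}s_{13}^{k_{13}}s_{23}^{k_{23}}$, $\mathrm{s}'(\vec w)=2k_{12}+3(k_{13}+k_{23})$, and $N(\vec w)=k_{13}k_{23}$; this is exactly the assertion of the lemma, and the $C_{\vec u}$ lie in $\mathbb Z$ because at $q=1$ the relation has integer coefficients. I expect the main obstacle to be the bookkeeping of the first paragraph: one must confirm that \emph{every} monomial on the right-hand side of the relation has weight $\le8$, so that reductions can never feed weight back up to the top and corrupt the ``top-weight part'' used throughout the induction. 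Once this filtration property is secured, the strictly decreasing invariant $w_{13}w_{23}$ makes the induction essentially automatic.
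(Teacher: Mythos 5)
Your proof is correct, and its core mechanism is the same as the paper's: peel off one copy of $s_{12}s_{13}s_{23}$, substitute the relation of Lemma \ref{relation}, use the fact that every monomial on its right-hand side has $\mathrm{s}'$-weight at most $8$ so that rewriting respects the filtration (this is the paper's Lemma \ref{lem;s12_13_23} / Lemma \ref{lem;prod_sum_ss}), and track the strictly decreasing invariant $u_{13}u_{23}$. The organization differs, though: the paper inducts on $k_{12}$, treating $k_{12}=1$ first and then writing $s_{12}^{k}s_{13}^{k_{13}}s_{23}^{k_{23}}=s_{12}\cdot(\text{expansion of }s_{12}^{k-1}s_{13}^{k_{13}}s_{23}^{k_{23}})$, which forces it to re-expand the products $s_{12}s^{\vec u}$ for general $\vec u$ inside the inductive step; you instead prove the statement for arbitrary monomials $s^{\vec w}$ with $w_{12}w_{13}w_{23}\neq 0$ and induct directly on $w_{13}w_{23}$, which makes the induction self-contained and arguably cleaner, since the recursion bottoms out automatically once all produced exponent vectors land in $\Lambda$. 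One concrete point in your favour: you correctly list all three weight-$8$ monomials of the relation, including $s_1s_2s_3s_{123}$ (of weight $2+2+1+3=8$), whereas the paper's displayed computation absorbs that term into ``terms with degrees less than $8$,'' which is inaccurate; the slip is harmless for the lemma because $s_1s_2s_3s_{123}\,s_{13}^{k_{13}-1}s_{23}^{k_{23}-1}$ also has top degree and satisfies $u_{13}u_{23}=(k_{13}-1)(k_{23}-1)<k_{13}k_{23}$, so it simply belongs in $\Lambda_0$, but your bookkeeping is the more careful one. The only step you leave implicit is termination of the rewriting underlying the filtration property (each application of the relation strictly decreases $w_{12}+w_{13}+w_{23}$, so it terminates); the paper leaves the same point implicit in Lemma \ref{lem;s12_13_23}.
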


Assume that $s^{\vec{k}}\in \sSq(\partial H_3)_{\gamma}\cdot X$ for any $\vec{k}\in \Lambda$ with $\deg(s^{\vec{k}})<k$. 
For any $\vec{v} =(v_1,v_2,v_3,v_{12},v_{13},v_{23},v_{123})\in\Lambda$ with $\deg(s^{\vec{v}})=k$,
Lemma \ref{keylem} implies that 
\begin{align}
\alpha_{123}^{v_{123}}\alpha_{12}^{v_{12}}
\alpha_{13}^{v_{13}}\alpha_{23}^{v_{23}}\cdot(s_1^{v_1}s_2^{v_2}s_3^{v_3}) = 
\sum_{\vec{u}\in\Lambda,\deg(s^{\vec{u}})=k  } C_{\vec{v},\vec{u}}s^{\vec{u}} + \text{(lower degree terms)}.\label{eq;prod_sum_action}
\end{align}

We consider a total order on $\mathbb{N}^7$ defined by the lexicographic order with respect to 
$$(v_{13}v_{23},v_{123},v_{23},v_{13},v_{12},v_{3},v_{2},v_{1}),$$
where $(v_1,v_2,v_3,v_{12},v_{13},v_{23},v_{123})\in\bN^7$. 
Then this order induces the total order on 
$\{s^{\vec{u}}\mid \vec{u}\in\Lambda,\ \deg(s^{\vec{u}}) = k\}$. 
When $q=1$, note that the left-hand side of (\ref{eq;prod_sum_action}) is equal to \begin{align}
&(s_{123}+s_{12}s_3)^{v_{123}}s_{12}^{v_{12}}
s_{13}^{v_{13}}(s_{23}+s_2s_3)^{v_{23}}s_1^{v_1}s_2^{v_2}s_3^{v_3}\nonumber\\
&=\sum_{i=0}^{v_{123}}\sum_{j=0}^{v_{23}}\dbinom{v_{123}}{i}\dbinom{v_{23}}{j} s_{123}^{v_{123}-i}\, s_{12}^{v_{12}+i}\, s_{13}^{v_{13}}\, s_{23}^{v_{23}-j}\, s_1^{v_1}\, s_2^{v_2+j}\, s_3^{v_3+i+j},\label{prod;expand}
\end{align}
where the coefficients denote binomial coefficients.

In the case when $v_{12}=0$ and $v_{13}v_{23}\neq 0$, 
we have 
\begin{align*}
    &\sum_{i=0}^{v_{123}}\sum_{j=0}^{v_{23}}\dbinom{v_{123}}{i}\dbinom{v_{23}}{j} s_{123}^{v_{123}-i}\, s_{12}^{v_{12}+i}\, s_{13}^{v_{13}}\, s_{23}^{v_{23}-j}\, s_1^{v_1}\, s_2^{v_2+j}\, s_3^{v_3+i+j}\\
    =&\sum_{j=0}^{v_{23}}\dbinom{v_{23}}{j} s_{123}^{v_{123}}\, s_{13}^{v_{13}}\, s_{23}^{v_{23}-j}\, s_1^{v_1}\, s_2^{v_2+j}\, s_3^{v_3+i+j}\\
    &+\sum_{i=1}^{v_{123}}\sum_{j=0}^{v_{23}}\dbinom{v_{123}}{i}\dbinom{v_{23}}{j} s_{123}^{v_{123}-i}\, s_{12}^{i}\, s_{13}^{v_{13}}\, s_{23}^{v_{23}-j}\, s_1^{v_1}\, s_2^{v_2+j}\, s_3^{v_3+i+j}.
\end{align*}
The highest term of
$$\sum_{j=0}^{v_{23}}\dbinom{v_{23}}{j} s_{123}^{v_{123}}\, s_{13}^{v_{13}}\, s_{23}^{v_{23}-j}\, s_1^{v_1}\, s_2^{v_2+j}\, s_3^{v_3+i+j}$$
with respect to the total order is $s_{123}^{v_{123}} s_{13}^{v_{13}} s_{23}^{v_{23}} s_1^{v_1} s_2^{v_2} s_3^{v_3}$.
Lemma \ref{lem;prod_sum_sss} implies
the highest term of
$$\sum_{i=1}^{v_{123}}\sum_{j=0}^{v_{23}}\dbinom{v_{123}}{i}\dbinom{v_{23}}{j} s_{123}^{v_{123}-i}\, s_{12}^{i}\, s_{13}^{v_{13}}\, s_{23}^{v_{23}-j}\, s_1^{v_1}\, s_2^{v_2+j}\, s_3^{v_3+i+j}$$
with respect to the total order is lower than 
$s_{123}^{v_{123}} s_{13}^{v_{13}} s_{23}^{v_{23}} s_1^{v_1} s_2^{v_2} s_3^{v_3}$.
Thus the highest term of  (\ref{prod;expand}) with respect to the total order is $s_{123}^{v_{123}} s_{13}^{v_{13}} s_{23}^{v_{23}} s_1^{v_1} s_2^{v_2} s_3^{v_3}$.

In the case when $v_{13}v_{23}=0$, it is easy to see that the highest term of (\ref{prod;expand}) with respect to the total order is $s_{123}^{v_{123}} s_{12}^{v_{12}} s_{13}^{v_{13}} s_{23}^{v_{23}} s_1^{v_1} s_2^{v_2} s_3^{v_3}$. 

The above two cases imply the matrix $(C_{\vec{v},\vec{u}})_{\vec{v},\vec{u}\in\Lambda, \deg(s^{\vec{v}})=\deg(s^{\vec{u}})=k}$ is a lower triangular matrix whose diagonal entries are $1$ when $q=1$.
Then, using the same technique as in the proof of the first case of Case 1, one can show 
$s^{\vec{v}}\in \sSq(\partial H_3)_{\gamma}\cdot X$ for any $\vec{v}\in\Lambda$. Thus $\iota_{*}(\sSq(\partial H_3)_{\gamma}\cdot X) = \sSq(H_3^{\gamma})$.

\noindent{\bf The second case in Case $2$.} 
In the remaining of the proof, we will use $\vec{k}\in \bN^7$ to denote $(k_1,k_2,k_3,k_{12},k_{23},k_{13},k_{123})$.

We set $X=\{\emptyset\}$. 
First, 
\begin{align}
\{s_1^{k_1} s_2^{k_2} s_3^{k_3}\mid k_1,k_2,k_3\in\mathbb N\}\subset \sSq(\partial H_3)_{\gamma}\cdot X \label{eq;inc123}
\end{align}
follows from 
\begin{align*}
\beta_2^{k_2}\beta_3^{k_3}\beta_{1}^{k_1}\cdot \emptyset=\beta_2^{k_2}\beta_3^{k_3}\cdot s_1^{k_1} = s_1^{k_1}s_2^{k_2} s_3^{k_3}\in \sSq(\partial H_3)_{\gamma}\cdot X.
\end{align*}
Next, we will show 
\begin{align}\label{eq-s13}
    \{s_1^{k_1} s_2^{k_2} s_3^{k_3} s_{13}^{k_{13}}\mid k_1,k_2,k_3,k_{13}\in\mathbb N\}\subset \sSq(\partial H_3)_{\gamma}\cdot X.
\end{align}
When $k_{13}=0$, 
$s_1^{k_1} s_2^{k_2} s_3^{k_3} s_{13}^{k_{13}}\in \sSq(\partial H_3)_{\gamma}\cdot X$ 
follows from \eqref{eq;inc123}. 
Suppose $k_{13}>0$ and 
$s_1^{k_1} s_2^{k_2} s_3^{k_3} s_{13}^{l}\in \sSq(\partial H_3)_{\gamma}\cdot X$ holds for $0\leq l<k_{13}$. 
We have 
$$\beta_3^{k_3}\beta_{13}^{k_{13}}\cdot (s_1^{k_1}s_2^{k_2}) = s_1^{k_1} s_2^{k_2} s_3^{k_3} s_{13}^{l} + \sum_{0\leq l<k_{13}}  s_1^{k_1} s_2^{k_2} s_3^{k_3} s_{13}^{l}\in \sSq(\partial H_3)_{\gamma}\cdot X.$$
From the assumption, we have $\sum_{0\leq l<k_{13}}  s_1^{k_1} s_2^{k_2} s_3^{k_3} s_{13}^{l}\in \sSq(\partial H_3)_{\gamma}\cdot X$. Thus we have 
$s_1^{k_1} s_2^{k_2} s_3^{k_3} s_{13}^{k_{13}}\in \sSq(\partial H_3)_{\gamma}\cdot X$.

Next, we will show 
\begin{align}\label{eq-s123}
\{s^{\vec{k}}\mid \vec{k}\in \Lambda,\ k_{12}=0\}\subset \sSq(\partial H_3)_{\gamma}\cdot X.
\end{align}
When $k_{123}=0$,
we have 
$$\beta_{23}^{k_{23}}\cdot (s_1^{k_1} s_2^{k_2} s_3^{k_3} s_{13}^{k_{13}}) = s_1^{k_1} s_2^{k_2} s_3^{k_3} s_{13}^{k_{13}} s_{23}^{k_{23}} \in \sSq(\partial H_3)_{\gamma}\cdot X.$$
Suppose $k_{123}>0$ and  $s_1^{k_1} s_2^{k_2} s_3^{k_3} s_{13}^{k_{13}} s_{23}^{k_{23}} s_{123}^{l}\in \sSq(\partial H_3)_{\gamma}\cdot X$ holds for any $0\leq l<k_{123}$.
We have 
$$\beta_2^{k_2}\beta_{23}^{k_{23}}(\beta_{123}')^{k_{123}}\cdot (s_1^{k_1}s_3^{k_3}s_{13}^{k_{13}}) = s_1^{k_1} s_2^{k_2} s_3^{k_3} s_{13}^{k_{13}} s_{23}^{k_{23}} s_{123}^{k_{123}} + \sum_{0\leq l<k_{123}}  s_1^{k_1} s_2^{k_2} s_3^{k_3} s_{13}^{k_{13}} s_{23}^{k_{23}} s_{123}^{l}\in \sSq(\partial H_3)_{\gamma}\cdot X.$$
From the assumption, we have $\sum_{0\leq l<k_{123}}  s_1^{k_1} s_2^{k_2} s_3^{k_3} s_{13}^{k_{13}} s_{23}^{k_{23}} s_{123}^{l}\in \sSq(\partial H_3)_{\gamma}\cdot X$. Thus we have 
$s_1^{k_1} s_2^{k_2} s_3^{k_3} s_{13}^{k_{13}} s_{23}^{k_{23}} s_{123}^{k_{123}}\in \sSq(\partial H_3)_{\gamma}\cdot X$.

By replacing the role of $\beta_{13}, s_{13}$, $\beta_{123}'$ with $\beta_{12},s_{12}$, $\beta_{123}$, similar to the above,  one can show
\begin{align}\label{eq-s123-s12}
\{s^{\vec{k}}\mid \vec{k}\in \Lambda,\ k_{13}=0\}\subset \sSq(\partial H_3)_{\gamma}\cdot X.
\end{align}


We will use the following Lemma for the rest of proof and will prove it in Section \ref{sec;pf_lem}.

\begin{lem}\label{keylem-s}
Let $\beta$ be a diagram in $\Sigma_{0}^4$ such that it intersects each $C_i$ (see Figure \ref{fig;edges_pants_g3}), $1\leq i\leq 6$, at $m_i$ points.
Suppose there exists a finite subset $\Lambda_0$ of $\Lambda$ 
such that 
\begin{equation}
\beta = 
\sum_{\vec{u}\in\Lambda_0} C_{\vec{u}}s^{\vec{u}},
\end{equation}
where $0\neq C_{\vec{u}}\in \bZ[q^{\pm1}]$ for any $\vec{u}\in\Lambda_0$.
Then, we have  ${\rm s}(\vec{u})\leq m_1+\cdots +m_6$ for any $\vec{u}\in\Lambda_0$. 
\end{lem}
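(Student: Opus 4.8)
The plan is to prove the statement by filtering $\sSq(\Sigma_0^4)$ by total geometric intersection number with the six edges of Figure \ref{fig;edges_pants_g3} and then comparing leading terms. For a non-negative integer $N$, let $\cF_N$ denote the $\bQ(q)$-subspace spanned by all multicurves (crossingless diagrams) $\gamma$ with $\sum_{i=1}^{6} m_i(\gamma)\le N$. Since the isotopy classes of multicurves form a basis of $\sSq(\Sigma_0^4)$, each $\cF_N$ is well defined and $\sSq(\Sigma_0^4)=\bigcup_N \cF_N$; for $z=\sum_\gamma a_\gamma\gamma$ written in the multicurve basis I set $\deg(z)=\max\{\sum_i m_i(\gamma)\mid a_\gamma\ne 0\}$. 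The whole argument reduces to locating the top-degree part of each product $s^{\vec u}$ inside this filtration.

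First I would verify that $(\cF_N)_N$ is an algebra filtration, i.e. $\cF_N\cdot\cF_M\subseteq\cF_{N+M}$. This follows from the locality of the Kauffman bracket relations: stacking two multicurves $\gamma_1,\gamma_2$ yields a diagram meeting the $i$-th edge in $m_i(\gamma_1)+m_i(\gamma_2)$ points, and resolving each crossing by relation $(\mathrm{A})$ inside a ball disjoint from the edges leaves these counts unchanged, while discarding a trivial loop by $(\mathrm{B})$ only decreases them; hence every multicurve occurring in the product meets the $i$-th edge in at most $m_i(\gamma_1)+m_i(\gamma_2)$ points. Each generator $s_j$ is a simple closed curve with $\deg(s_j)=\sum_i m_i(s_j)$, so $s^{\vec u}=\prod_j s_j^{u_j}\in\cF_{\mathrm{s}(\vec u)}$, the equality $\sum_j u_j\sum_i m_i(s_j)=\mathrm{s}(\vec u)$ being exactly the identification of $\mathrm{s}(\vec u)$ with the entry-sum of \eqref{value}. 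Applying the same resolution argument to $\beta$ itself gives $\deg(\beta)\le m_1+\cdots+m_6$.

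The heart of the argument is to identify the leading term of each $s^{\vec u}$. I would show that the top-degree part of $s^{\vec u}$ is a nonzero scalar multiple of the single multicurve $\gamma_{\vec u}$ with edge coordinates $\vec m(\gamma_{\vec u})=\sum_j u_j\vec m(s_j)$: the generators of Figure \ref{generate1} can be placed simultaneously in pairwise minimal position so that the maximal resolution of $s^{\vec u}$ realizes these additive intersection numbers without bigons, which is the geometric content underlying \cite{BP00} and the remark following Lemma \ref{inde}. By Lemma \ref{inde} the coordinate vectors $\sum_j u_j\vec m(s_j)$ are pairwise distinct for distinct $\vec u\in\Lambda$, so the multicurves $\{\gamma_{\vec u}\}_{\vec u\in\Lambda}$ are distinct and hence linearly independent. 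Now set $N=\max_{\vec u\in\Lambda_0}\mathrm{s}(\vec u)$ and expand $\beta=\sum_{\vec u\in\Lambda_0}C_{\vec u}s^{\vec u}$ in the multicurve basis: every $s^{\vec u}$ with $\mathrm{s}(\vec u)<N$ contributes only multicurves of degree $<N$, whereas each $s^{\vec u}$ with $\mathrm{s}(\vec u)=N$ contributes $C_{\vec u}$ times a nonzero multiple of $\gamma_{\vec u}$ in top degree. Since the $\gamma_{\vec u}$ are distinct basis multicurves and the $C_{\vec u}$ are nonzero, these contributions cannot cancel, so $\beta$ carries a nonzero coefficient on some degree-$N$ multicurve, giving $\deg(\beta)\ge N$. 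Combined with $\deg(\beta)\le m_1+\cdots+m_6$ this forces $N\le m_1+\cdots+m_6$, hence $\mathrm{s}(\vec u)\le m_1+\cdots+m_6$ for every $\vec u\in\Lambda_0$.

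The main obstacle is the leading-term identification, namely proving that the highest intersection-degree part of the product $s^{\vec u}$ is exactly the single multicurve $\gamma_{\vec u}$ with additive edge coordinates, with no cancellation or degree drop among the top resolutions. Everything else — the subadditivity of the filtration and the final degree count — is formal once this geometric fact and the distinctness from Lemma \ref{inde} are in hand.
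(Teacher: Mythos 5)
Your proposal is correct and follows essentially the same route as the paper's proof: both arguments rest on (a) the fact that resolving crossings of $\beta$ cannot increase intersection numbers with the edges, and (b) the leading-term fact that $s^{\vec u}$ expands in the multicurve basis as a nonzero multiple of the single multicurve $\gamma_{\vec u}$ with additive edge coordinates plus strictly lower terms, combined with Lemma \ref{inde} to rule out cancellation among the top-degree contributions. The geometric input you flag as the main obstacle is exactly the step the paper handles by citing \cite[Theorem 3.2]{AF17}.
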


Consider $\Lambda':=\{\vec k\in \Lambda\mid k_{23}=0 \}$.
In this paragraph, we will show 
\begin{align}\label{eq-s13-s12}
    \{s^{\vec{k}}\mid \vec{k}\in \Lambda'\}\subset \sSq(\partial H_3)_{\gamma}\cdot X.
\end{align}
using induction on ${\rm s}(\vec{k})$, defined in \eqref{s(vec)}. Trivially $s^{\vec k}=\emptyset\in \sSq(\partial H_3)_{\gamma}\cdot X$ when ${\rm s}(\vec k)= 0$.
Suppose 
\begin{align}\label{eq;assump}
\text{$s^{\vec k}\in \sSq(\partial H_3)_{\gamma}\cdot X$ when ${\rm s}(\vec k)<{\rm s}(\vec u) = l$ and $\vec k\in\Lambda'$,}
\end{align}
where $\vec u = (u_1,u_2,u_3,u_{12},u_{13},0,u_{123}) \in \Lambda'$. 
We have 
\begin{align*}
    \beta_{12}^{u_{12}}\cdot(s_1^{u_1} s_2^{u_2} s_3^{u_3} s_{13}^{u_{13}} s_{123}^{u_{123}}) \in \sSq(\partial H_3)_{\gamma}\cdot X.
\end{align*}
We can regard $\beta_{12}^{u_{12}}\cdot(s_1^{u_1} s_2^{u_2} s_3^{u_3} s_{13}^{u_{13}} s_{123}^{u_{123}})$  as a diagram in $\Sigma_{4}^0$ with crossings such that it intersects with $C_1\cup\cdots \cup C_6$ at ${\rm s}(\vec u)$ points. 
Then Lemma \ref{keylem-s} implies 
\begin{align}\label{eq-genus3-1}
    \beta_{12}^{u_{12}}\cdot(s_1^{u_1} s_2^{u_2} s_3^{u_3} s_{13}^{u_{13}} s_{123}^{u_{123}}) = \sum_{\vec v\in \Lambda', {\rm s}(\vec v) = {\rm s}(\vec u)} C_{\vec u,\vec v} s^{\vec v} + \sum_{\vec a\in \Lambda', {\rm s}(\vec a) < {\rm s}(\vec u)} C_{\vec u,\vec a} s^{\vec a} + \sum_{\vec b\in \Lambda\setminus \Lambda'} C_{\vec u,\vec b} s^{\vec b}.
\end{align}
The previous argument implies $\sum_{\vec b\in \Lambda\setminus \Lambda'} C_{\vec u,\vec b} s^{\vec b}\in \sSq(\partial H_3)_{\gamma}\cdot X$. The  assumption \eqref{eq;assump} implies $\sum_{\vec a\in \Lambda', {\rm s}(\vec a) < {\rm s}(\vec u)} C_{\vec u,\vec a} s^{\vec a}\in  \sSq(\partial H_3)_{\gamma}\cdot X$. 
Thus we have $\sum_{\vec v\in \Lambda', {\rm s}(\vec v) = {\rm s}(\vec u)} C_{\vec u,\vec v} s^{\vec v}\in \sSq(\partial H_3)_{\gamma}\cdot X$. 
By substituting $q=1$, equation \eqref{eq-genus3-1} will turn into
$\beta_{12}^{u_{12}}\cdot(s_1^{u_1} s_2^{u_2} s_3^{u_3} s_{23}^{u_{23}} s_{123}^{u_{123}}) = s^{\vec u}.$
Then the square matrix $(C_{\vec u,\vec v})_{\vec u\in\Lambda',{\rm s}(\vec u)=l;\; \vec v\in\Lambda',{\rm s}(\vec v)=l}$ is the identity matrix by substituting $q=1$. This implies that $(C_{\vec u,\vec v})_{\vec u\in\Lambda',{\rm s}(\vec u)=l;\; \vec v\in\Lambda',{\rm s}(\vec v)=l}$ is an invertible matrix in $\mathbb Q(q)$. Thus we have $s^{\vec u}\in \sSq(\partial H_3)_{\gamma}\cdot X$ for 
$\vec u\in\Lambda',{\rm s}(\vec u)=l$. 

From the previous argument, we have $s^{\vec{v}}\in \sSq(\partial H_3)_{\gamma}\cdot X$ for any $\vec{v}\in\Lambda$, i.e. $\iota_{*}(\sSq(\partial H_3)_{\gamma}\cdot X) = \sSq(H_3^{\gamma})$.

\begin{figure}[h]
      \centering
      \includegraphics[scale=0.45]{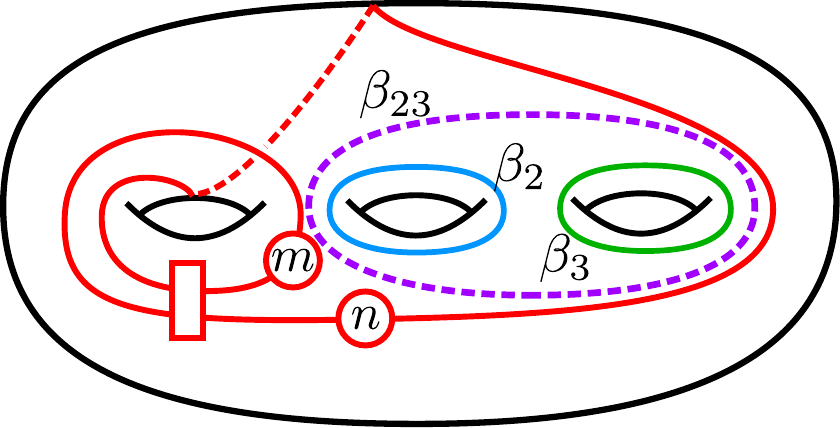}
      \caption{The red curve is the gluing curve $\gamma$. The  blue curve is $\beta_2$ the green curve is $\beta_{3}$, and the purple curve is $\beta_{23}$. Here, $\beta_{23}$ is on the back side.}\label{beta23}
  \end{figure}

\begin{figure}[h]
      \centering
      \includegraphics[scale=0.45]{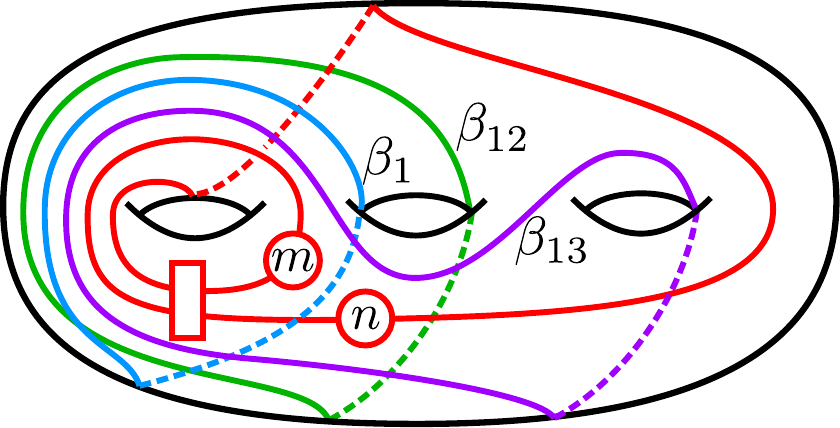}
      \caption{The red curve is the gluing curve $\gamma$. The  blue curve is $\beta_1$, the green curve is $\beta_{12}$, and the purple curve is $\beta_{13}$.}\label{beta1_12_13}
  \end{figure}

\begin{figure}[h]
      \centering
      \includegraphics[scale=0.45]{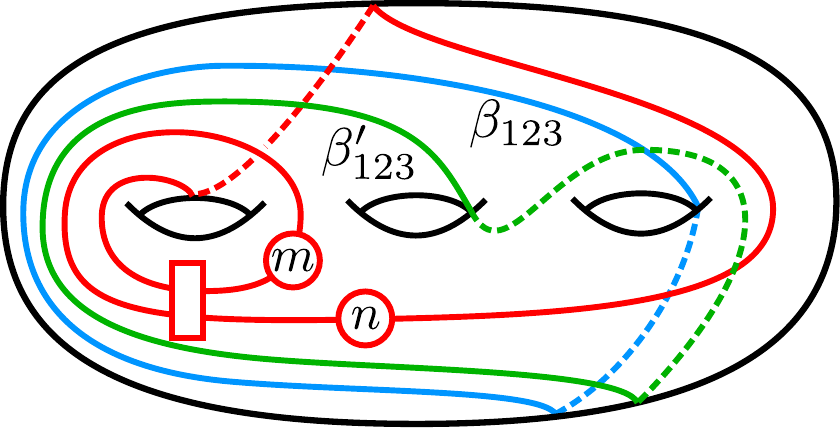}
      \caption{The red curve is the gluing curve $\gamma$. The blue curve is $\beta_{123}$ and the green curve is $\beta_{123}'$.}\label{beta123}
  \end{figure}
\end{proof}

\begin{rmk}
When $m$ and $n$ are coprime, using Seifert–van Kampen theorem, we have 
$$\pi_1(H_3^{\gamma})\cong \langle a,b,c\mid a^{m+n}b^{n}\rangle$$ for $\DT(\gamma)=(n+m,m,0,n,n,m+2n,t,0,0,0,0,0)$ in the second case in Case 2 of Theorem \ref{thm;example}. This implies that $H_3^{\gamma}\not\cong H_g$ for any $g$ and $H_3^{\gamma}$ is a non-trivial example with a boundary component of genus $2$.
Hence the higher genus case is not covered by \cite{AF22}, \cite{Le06}, \cite{Mar10}, \cite{LT14}, and \cite{LT15}. 
\end{rmk}

\section{Proofs of lemmas}\label{sec;pf_lem}

For any $\vec{k}\in\Lambda$, recall that $\text{deg}(s^{\vec{k}})$ is defined to be $\text{s}'(\vec{k})$, see (\ref{s(vec)}).

\begin{proof}[Proof of Lemma \ref{keylem}]
    Fix $\vec{k}=(k_1,k_2,k_3,k_{12},k_{13},k_{23},k_{123})\in \mathbb{N}^7$.
    From Lemma \ref{lem;basis}, there is $\Lambda_0\subset \Lambda$ satisfying the assumption.   
    Put ${k} = 
    \text{max}\{\deg(s^{\vec{u}})\mid \vec{u}\in\Lambda_0\}$. 
    We will show $k\leq {\rm s}'(\vec k)$.  Assume ${k}> {\rm s}'(\vec{k})$ on the contrary. 
    There is $\vec{u_0}\in\Lambda_0$ such that 
    $\deg(s^{\vec{u_0}}) = k$ and ${\rm s}(\vec{u_0}) = \max\{{\rm s}(\vec{u})\mid \deg(s^{\vec{u}}) = k,\vec{u}\in\Lambda_0\}$, where ${\rm s}(\vec{u})$ was defined as (\ref{s(vec)}).

    For any $\vec{u} = (u_1,u_2,u_3,u_{12},u_{13},u_{23},u_{123})\in\Lambda_0$, we have 
    $$s^{\vec{u}} = \sum_{\gamma\in{\rm Multi}{(\vec{u}})} C_{\gamma,\vec{u}} \gamma,$$ where ${\rm Multi}{(\vec{u}})$ is a finite subset of the set of isotopy classes of essential multicurves in $\Sigma_0^{4}$ and $C_{\vec{u},\gamma}\in \bZ[q^{\pm}]\setminus \{0\}$. 
    The following statement comes from e.g. \cite[Theorem 3.2]{AF17}.
    Using the symbols (\ref{vec(m)}) and (\ref{sum}), there is $\gamma_{\vec{u}}\in {\rm Multi}(\vec{u})$ such that $\vec{m}(\gamma_{\vec{u}})$ is equal to 
    \begin{align}
    u_1 \vec{m}(s_1)+ u_2 \vec{m}(s_2)+ u_3 \vec{m}(s_3)+u_{12} \vec{m}(s_{12})+u_{13} \vec{m}(s_{13})+u_{23} \vec{m}(s_{23})+u_{123} \vec{m}(s_{123})\label{sum_coord}
    \end{align}
    and, if $\gamma_{\vec{u}}\neq \gamma\in {\rm Multi}(\vec{u})$ then 
    ${\rm sum}(\gamma)< {\rm sum}(\gamma_{\vec{u}})$. 
    Note that ${\rm s}(\vec{u}) = {\rm sum}(\gamma_{\vec{u}})$.

    Suppose $\vec{u}\in\Lambda_0$ such that $\deg(s^{\vec{u}})<k$. 
    For any $\gamma\in{\rm Multi}(\vec{u})$, 
    $m_i(\gamma)\leq m_i(\gamma_{\vec{u}})\ (i=1,\dots,6)$. 
    We also have  
    $$m_1(\gamma_{\vec{u}}) + m_2(\gamma_{\vec{u}}) +m_4(\gamma_{\vec{u}}) +m_5(\gamma_{\vec{u}})\leq 
    \deg(s^{\vec{u}})<k=m_1(\gamma_{\vec{u_0}}) + m_2(\gamma_{\vec{u_0}}) +m_4(\gamma_{\vec{u_0}}) +m_5(\gamma_{\vec{u_0}}).$$ 
    Hence $\gamma\neq \gamma_{\vec{u_0}}$ for any $\gamma\in{\rm Multi}(\vec{u})$.  
    
    Suppose $\vec{u}\in\Lambda_0$ such that $\deg(s^{\vec{u}})=k$ and ${\rm s}(\vec{u})< {\rm s}(\vec{u_0})$. Trivially, we have $\gamma\neq \gamma_{\vec{u_0}}$ for any $\gamma\in{\rm Multi}(\vec{u})$. 
    
    Suppose $\vec{u}\in\Lambda_0$ such that $\deg(s^{\vec{u}})=k$, ${\rm s}(\vec{u}) = {\rm s}(\vec{u_0})$, and $\vec{u}\neq \vec{u_0}$. Lemma \ref{inde} implies $\gamma_{\vec{u}}\neq \gamma_{\vec{u_0}}$.
    For any $\gamma_{\vec{u}}\neq\gamma\in{\rm Multi}(\vec{u})$, we have 
    ${\rm sum}(\gamma)<{\rm sum}(\gamma_{\vec{u}} )= {\rm s}(\vec{u}) = {\rm s}(\vec{u_0})=
    {\rm sum}(\gamma_{\vec{u_0}} )$, which shows $\gamma\neq \gamma_{\vec{u_0}}$. 

     Now we expand \eqref{eqkey} as $$\alpha_{123}^{k_{123}}\alpha_{12}^{k_{12}}\alpha_{13}^{k_{13}}\alpha_{23}^{k_{23}}\cdot(s_1^{k_1}s_2^{k_2}s_3^{k_3}) = \sum_{\vec{u}\in\Lambda_0} C_{\vec{u}}s^{\vec{u}}=\sum_{\gamma\in{\rm Multi}(\vec{k})}C_{\vec{k},\gamma} \gamma,$$ where ${\rm Multi}(\vec{k})$ is a finite subset of the set of isotopy classes of essential multicurves on $\Sigma_0^{4}$ and $C_{\vec{k},\gamma}\neq 0$ for any $\gamma\in{\rm Multi}(\vec{k})$. Then the above argument implies that the most right-hand side contains $\gamma_{\vec{u_0}}$ and its coefficient is nonzero.
    Since the geometric intersection number does not increase when we resolve crossings, we have 
    $$m_1(\gamma) + m_2(\gamma) +m_4(\gamma) +m_5(\gamma) \leq \text{s}'(\vec{k})$$ for any $\gamma\in{\rm Multi}(\vec{k}).$ This contradicts the assumption 
    $$\deg(s^{\vec{u_0}})=m_1(\gamma_{\vec{u_0}}) + m_2(\gamma_{\vec{u_0}}) +m_4(\gamma_{\vec{u_0}}) +m_5(\gamma_{\vec{u_0}}) >\text{s}'(\vec{k}).$$ 
\end{proof}

\begin{lem}\label{lem;prod_sum_ss}
    Suppose $q=1$. For any two $\vec{u},\vec{v}\in\Lambda$, $s^{\vec{u}}s^{\vec{v}}$ is a linear sum of basis elements in $\{s^{\vec{k}}\mid \vec{k}\in \Lambda\}$ with degree less than or equal to $\deg({s^{\vec{u}}})+\deg({s^{\vec{v}}})$.
\end{lem}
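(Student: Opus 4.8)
The plan is to strip the statement down to a single monomial and then reduce with the defining relation of Lemma \ref{relation}, using $\text{s}'$ as a degree for which that relation is nonincreasing. Since $q=1$ makes the algebra commutative, $s^{\vec u}s^{\vec v}$ is literally the monomial $s^{\vec u+\vec v}$, and because $\text{s}'$ is a linear functional on exponent vectors, $\text{s}'(\vec u+\vec v)=\text{s}'(\vec u)+\text{s}'(\vec v)=\deg(s^{\vec u})+\deg(s^{\vec v})$. Extending $\deg(s^{\vec w}):=\text{s}'(\vec w)$ to all $\vec w\in\bN^7$, it therefore suffices to prove: every monomial $s^{\vec w}$ is a $\bZ$-linear combination of basis elements $s^{\vec k}$ ($\vec k\in\Lambda$) with $\deg(s^{\vec k})\le\deg(s^{\vec w})$.

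The only tool is the relation $s_{12}s_{13}s_{23}=(\text{RHS})$ of Lemma \ref{relation}, read as a rule that removes a forbidden triple product. I would first record two facts by inspecting the right-hand side against the weights $(2,2,1,2,3,3,3)$ that $\text{s}'$ assigns to $(s_1,s_2,s_3,s_{12},s_{13},s_{23},s_{123})$: first, every term of the right-hand side has $\text{s}'$-degree at most $8=\deg(s_{12}s_{13}s_{23})$, so the rule never raises the degree; second, the terms achieving degree exactly $8$ are precisely $s_{13}s_2s_{123}$, $s_{23}s_1s_{123}$, and $s_1s_2s_3s_{123}$, \emph{none of which contains} $s_{12}$ (all remaining terms drop the degree strictly).

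Armed with these, I would run a double induction. The outer variable is $D=\deg(s^{\vec w})$; restricted to degree exactly $D$, the inner variable is the exponent $w_{12}$ of $s_{12}$. If $w_{12}w_{13}w_{23}=0$ then $\vec w\in\Lambda$ and $s^{\vec w}$ is already a basis element; in particular this settles the base case $w_{12}=0$. Otherwise all three exponents are positive, so I factor off one copy of $s_{12}s_{13}s_{23}$ and substitute the relation, obtaining $s^{\vec w}$ as a $\bZ$-combination of monomials of degree $\le D$. The terms of degree $<D$ are dispatched by the outer hypothesis, and by the second fact every term of degree exactly $D$ has $s_{12}$-exponent $w_{12}-1$, so it is dispatched by the inner hypothesis. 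Collecting these expansions writes $s^{\vec w}$ as a combination of basis elements of degree $\le D=\deg(s^{\vec w})$, which closes both inductions and hence proves the lemma.

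The content is entirely in choosing the right monovariant. The tempting quantity $\min(w_{12},w_{13},w_{23})$ is \emph{not} monotone under the rewriting, since, for example, the term $s_{12}^2$ can increase it; this is exactly why I isolate the degree-preserving terms and track $w_{12}$ only on them, where the second fact guarantees those terms are free of $s_{12}$ and so strictly lower $w_{12}$. Beyond this observation and the finite weight check behind the two facts, I anticipate no essential difficulty, only the bookkeeping of the two nested inductions.
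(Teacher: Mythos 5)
Your proof is correct and follows essentially the same route as the paper: reduce the product to a single monomial (using commutativity at $q=1$ and linearity of $\mathrm{s}'$), then eliminate occurrences of $s_{12}s_{13}s_{23}$ via the relation of Lemma \ref{relation}, which is nonincreasing for the weight $\mathrm{s}'$. The one refinement you add is worth keeping: the paper's auxiliary Lemma \ref{lem;s12_13_23} just says to apply the relation ``repeatedly,'' whereas your observation that the degree-$8$ terms of the relation contain no $s_{12}$ (so the $s_{12}$-exponent strictly drops among the degree-preserving terms) is exactly what guarantees the rewriting terminates.
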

Before we prove Lemma \ref{lem;prod_sum_ss}, we show the following lemma. 

\begin{lem}\label{lem;s12_13_23}
Suppose $q=1$. For any $k_{12},k_{13},k_{23}\in \bN$, we have 
\begin{align}
s_{12}^{k_{12}} s_{13}^{k_{13}} s_{23}^{k_{23}} =\sum_{\vec{u}\in\Lambda_0} C_{\vec{u}}s^{\vec{u}},\label{eq;sss=sum}
\end{align}
where $\Lambda_0$ is a finite subset of $\Lambda$ such that $\deg(s^{\vec{u}}) \leq 2k_{12}+3(k_{13}+k_{23})$ for any $\vec{u}\in\Lambda_0$.
\end{lem}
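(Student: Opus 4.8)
The plan is to treat the single defining relation of Lemma \ref{relation} as a rewriting rule at $q=1$, where $\sS_1(\Sigma_0^{4})$ is commutative, and to control the rewriting by the grading $\text{s}'$ from (\ref{s(vec)}). Since $\text{s}'$ is a linear functional on exponent vectors, I first extend the degree to an arbitrary monomial $s^{\vec a}$ with $\vec a=(a_1,a_2,a_3,a_{12},a_{13},a_{23},a_{123})\in\bN^7$ by the same formula $\text{s}'(\vec a)$; the relevant generator degrees are $\text{s}'(s_1)=\text{s}'(s_2)=\text{s}'(s_{12})=2$, $\text{s}'(s_3)=1$, and $\text{s}'(s_{13})=\text{s}'(s_{23})=\text{s}'(s_{123})=3$, so $\text{s}'(s_{12}s_{13}s_{23})=8$.

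Writing the relation of Lemma \ref{relation} as $s_{12}s_{13}s_{23}=R$ with $R$ its right-hand side, I would record two numerical features of $R$, both by inspecting its terms. Feature (i): every monomial of $R$ has $\text{s}'$-degree at most $8$, so the rule never raises $\text{s}'$. Feature (ii): every monomial of $R$ contains at most one of $s_{12},s_{13},s_{23}$, with total multiplicity at most $2$; hence, setting $N(s^{\vec a}):=a_{12}+a_{13}+a_{23}$, each term of $R$ has $N\le 2$, strictly less than $N(s_{12}s_{13}s_{23})=3$.

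I would then prove, by strong induction on $N(s^{\vec a})$, the slightly more general statement that any monomial $s^{\vec a}$ with $\vec a\in\bN^7$ equals a $\bZ$-linear combination $\sum_{\vec u\in\Lambda}C_{\vec u}s^{\vec u}$ with $\text{s}'(\vec u)\le\text{s}'(\vec a)$ for every occurring $\vec u$; the lemma is the case $\vec a=(0,0,0,k_{12},k_{13},k_{23},0)$, where $\text{s}'(\vec a)=2k_{12}+3(k_{13}+k_{23})$. If $a_{12}a_{13}a_{23}=0$ then $\vec a\in\Lambda$ and the claim is immediate. Otherwise all three exponents are positive; I factor out one copy of $s_{12}s_{13}s_{23}$, substitute $R$, and expand using commutativity to get $s^{\vec a}=\sum_j \pm\, s^{\vec b_j}$, where $s^{\vec b_j}$ is $s_1^{a_1}s_2^{a_2}s_3^{a_3}s_{123}^{a_{123}}s_{12}^{a_{12}-1}s_{13}^{a_{13}-1}s_{23}^{a_{23}-1}$ times a monomial $R_j$ of $R$. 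By linearity of $\text{s}'$ and Feature (i), $\text{s}'(\vec b_j)=\text{s}'(\vec a)-8+\text{s}'(R_j)\le\text{s}'(\vec a)$, while by Feature (ii), $N(s^{\vec b_j})=N(s^{\vec a})-3+N(R_j)\le N(s^{\vec a})-1$. The induction hypothesis then rewrites each $s^{\vec b_j}$ over the basis of Lemma \ref{lem;basis} with degrees $\le\text{s}'(\vec b_j)\le\text{s}'(\vec a)$, and summing yields the result.

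The only genuine obstacle is selecting the termination measure. The naive candidates $a_{12}a_{13}a_{23}$ and $\min(a_{12},a_{13},a_{23})$ both fail, because $R$ contains the squares $s_{12}^2,s_{13}^2,s_{23}^2$, so a single rewriting can raise one off-diagonal exponent above its starting value and thereby increase both the product and the minimum. What works is the total off-diagonal degree $N=a_{12}+a_{13}+a_{23}$, which strictly decreases at each step precisely because the $N$-value $3$ monomial $s_{12}s_{13}s_{23}$ is always replaced by monomials of $N$-value at most $2$; once this is in place, the degree bound follows automatically from the linearity of $\text{s}'$ together with Feature (i).
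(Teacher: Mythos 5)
Your proposal is correct and follows essentially the same route as the paper: treat the relation of Lemma \ref{relation} as a rewriting rule at $q=1$ and observe that every term on its right-hand side has $\mathrm{s}'$-degree at most $\mathrm{s}'(s_{12}s_{13}s_{23})=8$, so the degree bound is preserved under each substitution. Your explicit termination measure $N=a_{12}+a_{13}+a_{23}$, together with the observation that the naive measures fail because of the squares $s_{12}^2,s_{13}^2,s_{23}^2$, is a welcome sharpening of a point the paper's proof leaves implicit in the phrase ``applying Lemma \ref{relation} repeatedly.''
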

\begin{proof}

If $k_{12}k_{13}k_{23} = 0$, since $s_{12}^{k_{12}} s_{13}^{k_{13}} s_{23}^{k_{23}}$ is a basis element, $s_{12}^{k_{12}} s_{13}^{k_{13}} s_{23}^{k_{23}}$ satisfies the claim. 

If $k_{12}k_{13}k_{23}\neq 0$ then we apply Lemma \ref{relation} for $s_{12}^{{k_{12}}-1} s_{13}^{{k_{13}}-1} s_{23}^{{k_{23}}-1}s_{12} s_{13} s_{23}$. 
Since each term on the right-hand side of the equation in Lemma \ref{relation} has the degree no greater than $\deg(s_{12}s_{13}s_{23})$. 
By applying Lemma \ref{relation} repeatedly until we have (\ref{eq;sss=sum}), we can conclude that $s_{12}^{k_{12}} s_{13}^{k_{13}} s_{23}^{k_{23}}$ is a linear sum of basis elements whose degrees are not greater than $2{k_{12}}+3({k_{13}}+{k_{23}})$.
\end{proof}

\begin{proof}[Proof of Lemma \ref{lem;prod_sum_ss}]
If $s^{\vec{u}}s^{\vec{v}}$ does not contain $s_{12} s_{13} s_{23}$, the claim is trivial. If $s^{\vec{u}}s^{\vec{v}}$ contains $s_{12} s_{13} s_{23}$, the claim follows from  Lemma \ref{lem;s12_13_23}.
\end{proof}

\begin{proof}[Proof of Lemma \ref{lem;prod_sum_sss}]
To prove the claim, we use mathematical induction on $k_{12}$. 
By applying Lemma \ref{relation} and Lemma \ref{lem;prod_sum_ss} to 
$s_{12} s_{13}^{k_{13}} s_{23}^{k_{23}}$, it is equal to
\begin{align*}
&(s_{13}s_2s_{123}+ s_{23}s_1s_{123}+\text{terms with degrees less than $8$})s_{13}^{{k_{13}}-1} s_{23}^{{k_{23}}-1}\\
&= s_{13}^{{k_{13}}} s_{23}^{{k_{23}}-1} s_2s_{123}+ s_{13}^{{k_{13}}-1} s_{23}^{{k_{23}}}s_1s_{123}+\text{(terms with degrees less than $2+3({k_{13}}+{k_{23}}$})).
\end{align*}
Thus the claim holds for $s_{12} s_{13}^{k_{13}} s_{23}^{k_{23}}$.

Fix $k\geq 2$ and suppose the claim holds for any ${k_{12}}< k$. Then 
\begin{align*}
s_{12}^{k} s_{13}^{k_{13}} s_{23}^{k_{23}} 
&=s_{12} \big(\sum_{\vec{u}\in\Lambda'} C_{\vec{u}}s^{\vec{u}} + (\text{terms with degrees less than $2(k-1) + 3({k_{13}}+{k_{23}})$})\big)\\
&= \sum_{\vec{u}\in\Lambda'} C_{\vec{u}} s_{12}s^{\vec{u}} +(\text{terms with degrees less than $2k + 3({k_{13}}+{k_{23}}))$},
\end{align*}
where $\Lambda'$ is a finite subset of $\Lambda$ such that, for any $\vec{u}=(u_1,u_2,u_3,u_{12},u_{13},u_{23},u_{123})\in\Lambda'$, we have $\deg(s^{\vec{u}}) = 2(k-1) + 3({k_{13}}+{k_{23}})$ and $u_{13}u_{23}<{k_{13}}{k_{23}}$. 
For $\vec{u}\in\Lambda'$, if $u_{13}u_{23} = 0$ then $s_{12}s^{\vec{u}}$ is a basis element and $u_{13}u_{23}=0< {k_{13}}{k_{23}}$. If $u_{13}u_{23}\neq 0$, we have 
\begin{align*}
s_{12}s^{\vec{u}} 
&=s_1^{u_1}s_2^{u_2}s_3^{u_3}s_{123}^{u_{123}}s_{12}s_{13}^{u_{13}}s_{23}^{u_{23}}\\
&=s_1^{u_1}s_2^{u_2}s_3^{u_3}s_{123}^{u_{123}} \big(\sum_{\vec{u}\in\Lambda''}C_{\vec{v}}s^{\vec{v}} + (\text{terms with degrees less than $2 + 3(u_{13}+u_{23})$})\big),
\end{align*}
where $\Lambda''$ is a finite subset of $\Lambda$ such that, for any $\vec{v}=(v_1,v_2,v_3,v_{12},v_{13},v_{23},v_{123})\in\Lambda''$, we have $\deg(s^{\vec{v}}) = 2 + 3(u_{13}+u_{23})$ and $v_{13}v_{23}<u_{13}u_{23}<k_{13}k_{23}$. Then 
\begin{align*}
s_{12}s^{\vec{u}} = \sum_{\vec{u}\in\Lambda''}C_{\vec{v}}s_1^{u_1}s_2^{u_2}s_3^{u_3}s_{123}^{u_{123}}s^{\vec{v}} + (\text{terms with degrees less than $2 + \deg(s^{\vec{u}})$}).
\end{align*}
Hence the claim holds for ${k_{12}} = k$.
\end{proof}

\begin{proof}[Proof of Lemma \ref{keylem-s}]
Assume that ${\rm s}(\vec{u})> m_1+\cdots +m_6$ for some $\vec{u}\in\Lambda_0$ on the contrary. Then 
$m=\text{max}\{{\rm s}(\vec u)\mid \vec u\in\Lambda_0\}>m_1+\cdots+m_6$.
As in the proof of Lemma \ref{keylem}, for any $\vec{u} = (u_1,u_2,u_3,u_{12},u_{13},u_{23},u_{123})\in\Lambda_0$, we have 
    $$s^{\vec{u}} = \sum_{\gamma\in{\rm Multi}{(\vec{u}})} C_{\gamma,\vec{u}} \gamma.$$
There is $\gamma_{\vec{u}}\in {\rm Multi}(\vec{u})$ such that $\vec{m}(\gamma_{\vec{u}})$ is equal to the formula in \eqref{sum_coord}.
Take $\vec u_0\in\Lambda_0$ such that ${\rm s}(\vec u_0) = m$. 

For any $\vec v\in\Lambda_0$ such that ${\rm s}(\vec v)<m$, we have 
${\rm s}(\gamma)<{\rm s}(\gamma_{\vec u_0})$ for $\gamma\in {\rm Multi}(\vec v)$. Thus $\gamma\neq \gamma_{\vec u_0}.$
For any $\vec v\in\Lambda_0$ such that ${\rm s}(\vec v)=m$, we have 
${\rm s}(\gamma)<{\rm s}(\gamma_{\vec u_0})$ for $\gamma\in {\rm Multi}(\vec v)\setminus\{\gamma_{\vec v}\}$. 
Thus $\gamma\neq \gamma_{\vec u_0}$ for $\gamma\in {\rm Multi}(\vec v)\setminus\{\gamma_{\vec v}\}$.
Lemma \ref{inde} implies $\gamma_{\vec{v}}\neq \gamma_{\vec{u_0}}$ for $\vec v\in\Lambda_0\setminus\{\vec{u}_0\}$.
Thus if we present $\sum_{\vec{u}\in\Lambda_0} C_{\vec{u}}s^{\vec{u}}$ as a linear sum of essential multicurves, it contains $\gamma_{\vec u_0}$.

For a diagram $\alpha$, by resolving its crossings, $\alpha$ can be presented as a linear sum of essential multicurves. This implies that this linear sum does not contain $\gamma_{\vec u_0}$ since the intersection number between $\gamma_{\vec u_0}$ and $C_1\cup\cdots\cup C_6$ is strictly greater than $m_1+\cdots+m_6$. 
Then we have a contradiction. 
\end{proof}

\appendix\section{Sphere sliding}\label{sec:sliding}

We recall the sphere sliding given in \cite{HP95}. 
The \textbf{degree} of a skein in $M_1\#_{\bf D}M_2$ is the geometric intersection number with the skein and $D_1$ (equivalently that of the skein and $D_2$). 
\begin{align}
&\begin{array}{c}\includegraphics[scale=0.32]{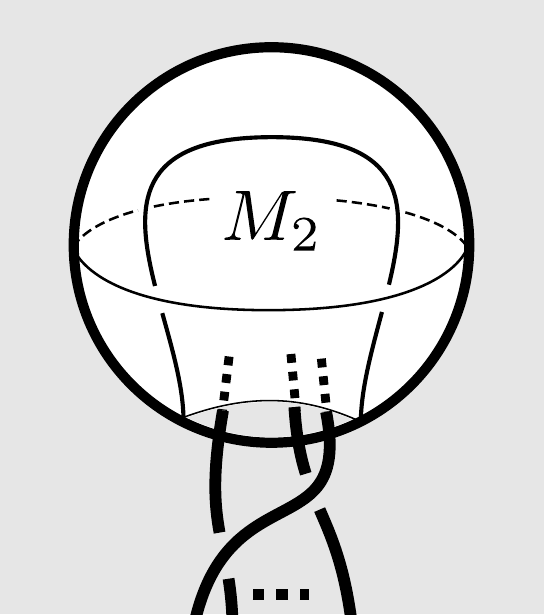}\end{array}=q^n\begin{array}{c}\includegraphics[scale=0.32]{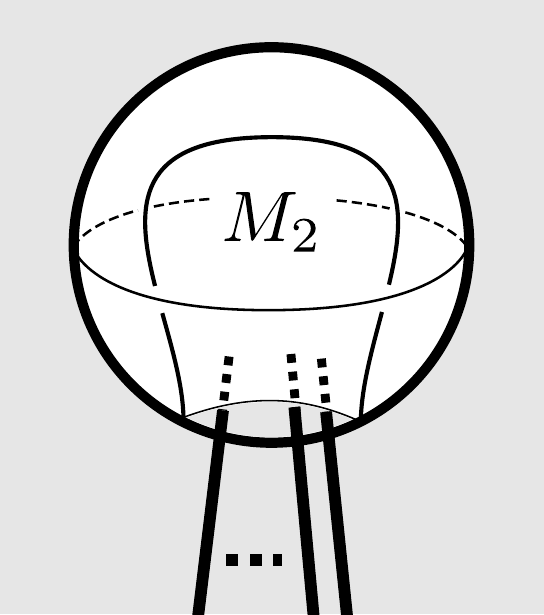}\end{array}
+\text{lower degree terms},\label{eq;sphere_posi} \\
&\begin{array}{c}\includegraphics[scale=0.32]{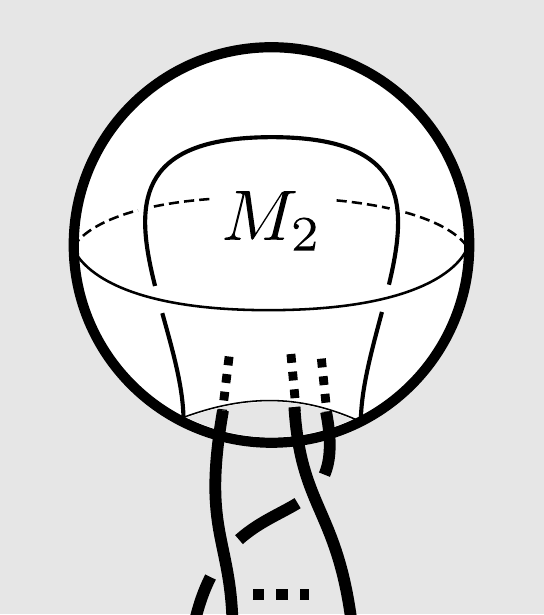}\end{array}=q^{-n}\begin{array}{c}\includegraphics[scale=0.32]{draws/sphere_standard.pdf}\end{array}
+\text{lower degree terms}, \label{eq;sphere_nega}
\end{align}
where the shaded part outside of the sphere is the inside of $M_1^{\cC_1}$ and the shaded part on the sphere is the attaching region $D_1$.
\begin{align}
\begin{array}{c}\includegraphics[scale=0.32]{draws/sphere_posi.pdf}\end{array}
=\begin{array}{c}\includegraphics[scale=0.32]{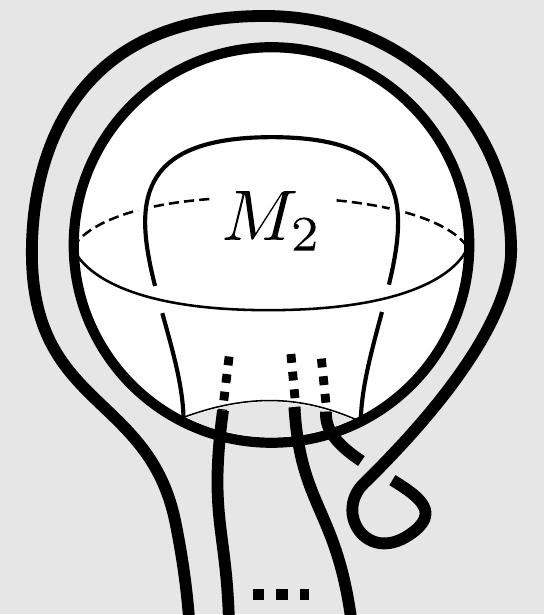}\end{array}
=\begin{array}{c}\includegraphics[scale=0.32]{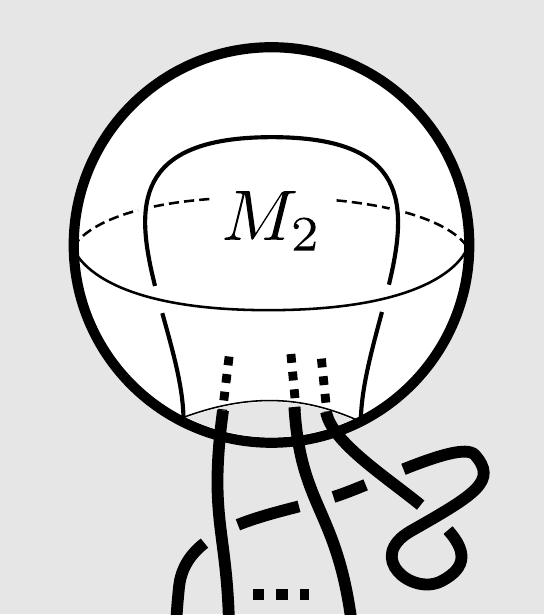}\end{array}=q^{-6}\begin{array}{c}\includegraphics[scale=0.32]{draws/sphere_nega.pdf}\end{array}, \label{eq;sphere_sliding}
\end{align}
where the last equality follows from $\begin{array}{c}\includegraphics[scale=0.15]{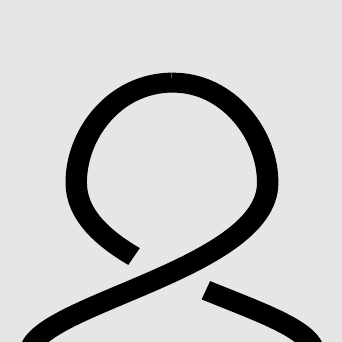}\end{array}
=-q^{-3}\begin{array}{c}\includegraphics[scale=0.15]{draws/empty_loop.pdf}\end{array}.$ Note that a similar idea is originally given in \cite{HP95}, where we call the technique the \textbf{sphere sliding}. 

From (\ref{eq;sphere_sliding}) with (\ref{eq;sphere_posi}) and (\ref{eq;sphere_nega}), 
\begin{align}
\begin{array}{c}\includegraphics[scale=0.32]{draws/sphere_standard.pdf}\end{array}
=\dfrac{1}{q^n-q^{-n-6}}(\text{lower degree terms}). \label{eq;sphere_decrease}
\end{align}
Note that (\ref{eq;sphere_decrease}) implies the degree of a framed link in $M_1\#_{\bf D}M_2$ can decrease, where the degree of a framed link in $M_1\#_{\bf D}M_2$ is the geometric intersection number of the framed link and the properly embedded disk $D$. 
Since the degree is even, one can apply similar way repeatedly until all the degrees are 0.

\end{document}